\crefname{equation}{}{}
\crefname{figure}{Figure}{Figures}
\crefname{assumption}{Assumption}{Assumptions}
\crefname{condition}{Condition}{Conditions}
\def\lmin{\lambda_{\min}}
\def\epsilong{\epsilon_g}
\def\epsilonh{\epsilon_H}
\def\deltagt{\delta_{g,t}}
\def\deltah{\delta_H}
\def\g{{\bf g}}
\def\H{{\bf H}}
\newcommand{\eye}{\bf{I}}
\def\a{{\bf a}}
\def\d{{\bf d}}
\def\f{{\bf f}}
\def\I{{\bf I}}
\def\p{{\bf p}}
\def\rr{{\bf r}}
\def\v{{\bf v}}
\def\x{{\bf x}}
\def\y{{\bf y}}
\def\z{{\bf z}}
\def\0{{\bf 0}}
\def\1{{\bf 1}}
\def\OM{{\mathcal O}}
\def\R{{\mathbb R}}
\def\sgn{\mathrm{sgn}}
\newtheorem{theorem}{Theorem}%[section]
\newtheorem{lemma}{Lemma}
\newtheorem{corollary}[theorem]{Corollary}
\newtheorem{definition}[theorem]{Definition}
\newtheorem{assumption}{Assumption}
\newtheorem{condition}{Condition}
\newcommand{\refone}[1]{{\leavevmode\color{black}#1}}
\newcommand{\reftwo}[1]{{\leavevmode\color{black}#1}}
\newcommand{\refboth}[1]{{\leavevmode\color{black}#1}}
\newcommand{\Abs}[1]{\left |#1\right|}
\newcommand{\norm}[1]{{\left\|#1\right\|}}
\newcommand{\bigO}{\mathcal{O}}
\def\eqref#1{(\ref{#1})}
\def\1{\bm{1}}
\def\rr{{\textnormal{r}}}
\def\vv{{\bm{v}}}
\def\mA{{\bm{A}}}
\DeclareMathAlphabet{\mathsfit}{\encodingdefault}{\sfdefault}{m}{sl}
\SetMathAlphabet{\mathsfit}{bold}{\encodingdefault}{\sfdefault}{bx}{n}
\newcommand\bbR{\ensuremath{\mathbb{R}}} % Real numbers
\newcommand{\flow}{f_{\text{\rm low}}}
\newcommand{\dtype}{d_{\text{\rm type}}}
\newcommand{\SOL}{\text{\sc SOL}}
\newcommand{\NC}{\text{\sc NC}}
\newcommand{\jsol}{j_{\text{\rm sol}}}
\newcommand{\csol}{c_{\text{\rm sol}}}
\newcommand{\jnc}{j_{\text{\rm nc}}}
\newcommand{\cnc}{c_{\text{\rm nc}}}
\newcommand{\csolb}{\bar{c}_{\text{\rm sol}}}
\newcommand{\cncb}{\bar{c}_{\text{\rm nc}}}
\begin{document}

\title{Inexact Newton-CG Algorithms With Complexity Guarantees}

\author{
	Zhewei Yao
	\thanks{
    Department of Mathematics, University of California at Berkeley,
    Email: zheweiy@berkeley.edu
	}
	\and 
	Peng Xu
	\thanks{
	Amazon AWS AI,
	Email: pengx@amazon.com
	(Work done while at Institute for Computational and Mathematical Engineering, Stanford University.)
	}
	\and
	Fred Roosta
	\thanks{
		School of Mathematics and Physics, University of Queensland, Brisbane, Australia, and 
		International Computer Science Institute, Berkeley, USA,     
		Email: fred.roosta@uq.edu.au
	}
	\and 
	Stephen J. Wright
	\thanks{
	Computer Sciences Department, University of Wisconsin-Madison,
	Email: swright@cs.wisc.edu
	}
	\and
	Michael W. Mahoney
	\thanks{
	International Computer Science Institute and Department of Statistics, University of California at Berkeley,    
	Email: mmahoney@stat.berkeley.edu
	}
}

\maketitle

\begin{abstract}
    {\refone{We consider variants of a recently-developed Newton-CG algorithm for nonconvex problems \citep{royer2018newton} in which inexact estimates of the gradient and the Hessian information are used for various steps.} 
    Under certain conditions on the inexactness measures, we derive iteration complexity bounds for achieving $\epsilon$-approximate second-order optimality that match best-known lower bounds. 
    Our inexactness condition on the gradient is adaptive, allowing for crude accuracy in regions with large gradients.
    We describe two variants of our approach, one in which the step-size along the computed search direction is chosen adaptively and another in which the step-size is pre-defined.
    \refone{To obtain second-order optimality, our algorithms will make use of a negative curvature direction on some steps. 
    These directions can be obtained, with high-probability, using a certain randomized algorithm. 
    In this sense, all of our results hold with high-probability over the run of the algorithm.}
    We evaluate the performance of our proposed algorithms empirically on several machine learning models.}
    {Newton-CG, Non-Convex Optimization, Inexact Gradient, Inexact Hessian}
\end{abstract}

\section{Introduction}
We consider the following unconstrained optimization problem
\begin{equation} 
\label{eqn:basic_problem}
\min_{\x\in\R^d} f(\x),
\end{equation}
where $f:\bbR^d \to \bbR $ is a smooth but nonconvex function. 
At the heart of many machine learning and scientific computing applications lies the problem of finding an (approximate) minimizer of \cref{eqn:basic_problem}. 
Faced with modern ``big data'' problems, many classical optimization algorithms \citep{numopt,bertsekas1999nonlinear} are inefficient in terms of memory and/or computational overhead. 
Much recent research has focused on approximating various aspects of these algorithms. 
For example, efficient variants of first-order algorithms, such as the stochastic gradient method,  make use of inexact approximations of the gradient. 
The defining element of second-order algorithms is the  use of the curvature information from the Hessian matrix. 
In these methods, the main computational bottleneck lies with evaluating the Hessian, or at least being able to perform matrix-vector products involving the Hessian. 
Evaluation of the gradient may continue to be an unacceptably expensive operation in second-order algorithms too.
Hence, in adapting second-order algorithms to machine learning and scientific computing applications, we seek to {\em approximate} the computations involving the Hessian and the gradient, while preserving much of the convergence behavior of the exact underlying second-order algorithm. 

Second-order methods use curvature information to nonuniformly rescale the gradient in a way that often makes it a more  ``useful'' search direction, in the sense of providing a greater decrease in function value.
Second-order information also opens the possibility of convergence to points that satisfy second-order necessary conditions for optimality,  that is, $\x$ for which $\|\nabla f(\x)\|= 0$ and $\nabla^2 f(\x)\succeq \bm{0}$. 
For nonconvex machine learning problems, first-order stationary points include saddle points, which are undesirable for obtaining good generalization performance \citep{dauphin2014identifying,choromanska2015loss,saxe2013exact,lecun2012efficient}.

The canonical example of second-order methods is the classical Newton's method, which in its pure form is often written as
\begin{align*}
\x_{k+1} = \x_k + \alpha_k \d_k, \quad \mbox{where $\d_k =-\H_k^{-1} \g_k$,}
\end{align*}
where $\H_k = \nabla^2 f(\x_k)$ is the Hessian, $\g_k = \nabla f(\x_k)$ is the gradient, and $\alpha_k $ is some appropriate step-size, often chosen using an Armijo-type line-search \cite[Chapter~3]{numopt}. 
A more practical variant for large-scale problems is Newton-Conjugate-Gradient (Newton-CG), in which the linear system $\H_k \d_k = -\g_k$ is solved inexactly using the conjugate gradient (CG) algorithm \citep{steihaug1983conjugate}.
Such an approach requires access to the Hessian matrix only via matrix-vector products; it does not require $\H_k$ to be evaluated explicitly.

Recently, a new variant of the Newton-CG algorithm  was proposed in \cite{royer2018newton}  that can be applied to large-scale non-convex problems. 
This algorithm is equipped with certain safeguards and enhancements that allow worst-case complexity to be bounded in terms of the number of iterations and the total running time.
However, this approach relies on the exact evaluation of the gradient and on matrix-vector multiplication involving the exact Hessian at each iteration. 
Such operations can be prohibitively expensive in machine learning problems.
For example, when the underlying optimization problem has the finite-sum form
\begin{equation}
\label{eq:finte_sum_problem}
\min_{\x\in\R^d} f(\x) = \sum_{i=1}^n f_i(\x),
\end{equation}
exact computation of the Hessian/gradient can be costly when $ n \gg 1 $, requiring a complete pass through the training data set.
Our work here builds upon that of \cite{royer2018newton} but allows for \refboth{inexactness in computation of gradients and Hessians, while obtaining a similar complexity result to the earlier paper.}

\subsection{Related work}
Since deep learning became ubiquitous, first order methods such as gradient descent and its adaptive, stochastic variants \citep{kingma2014adam,duchi2011adaptive}, have become the most popular class of optimization algorithms in machine learning; see the recent textbooks \cite{beck2017first,lan2020first,lin2020accelerated,WriR21} for in-depth treatments.
These methods are easy to implement, and their per-iteration cost is low compared to second-order alternatives. 
Although classical theory for first-order methods guarantees convergence only to first-order optimal (stationary) points,  \cite{ge2015escaping,jin2017escape,levy2016power} argued that stochastic variants of certain first-order methods such as SGD have the potential of escaping saddle points and converging to second-order stationary points. 
The effectiveness of such methods usually requires painstaking fine-tuning of their (often many) hyperparameters, and the number of iterations they require to escape saddle regions can be large.

By contrast, second-order methods can make use of curvature information (via the Hessian) to escape saddle points efficiently and ultimately converge to second-order stationary points. This behavior is seen in trust-region methods \citep{conn2000trust,curtis2014trust,Cur19a}, cubic regularization \cite{nesterov2006cubic} and its adaptive variants (ARC) \citep{cartis2011adaptiveI,cartis2011adaptiveII}, as well as line-search based second-order methods \citep{royer2018complexity,royer2018newton}. 
Subsequent to \cite{cartis2011adaptiveI,cartis2011adaptiveII,cartis2012complexity}, which were among the first works to study Hessian approximations to ARC and trust region algorithms, respectively, \cite{xuNonconvexTheoretical2017} analyzed the optimal complexity of both trust region and cubic regularization, in which the Hessian matrix is approximated under milder conditions. 
Extension to gradient approximations was then studied in \cite{tripuraneni2017stochasticcubic,yao2018inexact}.
\refone{A novel take on inexact  gradient and dynamic Hessian accuracy is investigated in \cite{bellavia2021stochastic}.}
The analysis in \cite{gratton2018complexity,cartis2018global,blanchet2019convergence} relies on probabilistic models whose quality are ensured with a certain probability, but which allow for approximate evaluation of the objective function as well. 
\refone{Alternative approximations of the function and its derivative are considered in \cite{bellavia2019adaptive}.}

A notable difficulty of these methods concerns the solution of their respective subproblems, which can themselves be nontrivial nonconvex optimization problems. 
Some exceptions are \cite{royer2018newton,liu2019stability,roosta2018newton}, whose fundamental operations are linear algebra computations, which are much better understood. While \cite{liu2019stability,roosta2018newton} are limited in their scope to invex problems \citep{mishra2008invexity}, the method in \cite{royer2018newton} can be applied to more general non-convex settings.
In fact, \cite{royer2018newton} enhances the classical Newton-CG approach with safeguards to detect negative curvature in the Hessian, during the solution of the Newton equations to obtain the step $\d_k$. 
Negative curvature directions can subsequently be exploited by the algorithm to make significant progress in reducing the objective. 
Moreover, \cite{royer2018newton} gives complexity guarantees that have been shown to be optimal in certain settings.
(Henceforth, we use the term ``Newton-CG'' to refer specifically to the algorithm in \cite{royer2018newton}.)

\subsection{Contribution}

\iffalse
{\bf PREVIOUS TEXT:}
\refone{We describe two variants of the Newton-CG algorithm in \cite{royer2018newton} in which, to reduce overall computational costs, approximations of gradient and Hessian are employed at various steps. In the first variant (\cref{alg:inexact_withlinesearch}), we consider a line-search based algorithm where we entirely replace the exact Hessian with its approximations. To compute the Newton direction (Procedure \ref{alg:capped_cg}), we also employ approximation of the gradient. However, to subsequently obtain the step-size, we resort to using the exact function and gradient information. This is of course not ideal since in many problems, evaluating the gradient (or the objective function) exactly can be prohibitive. 
To partially remedy this situation, we propose a second variant (\cref{alg:fixed_stepsize}) which, by employing constant step-sizes, obviates the need for function evaluations and allows for an approximation to the gradients to be used throughout. 
The main drawback of this variant is that the fixed step-size depends on bounds on problem-dependent quantities. 
While these are available in several problems of interest in machine learning and statistics (see \cref{tab:L_H_bound,tab:K_H_K_g_bound}), they may be hard to estimate for other practical problems. 
Moreover, the step-sizes obtained from these bounds tend to be conservative, a situation that arises often in fixed-step optimization methods.}
\fi

\refone{We describe two new variants of the Newton-CG algorithm of \cite{royer2018newton} in which, to reduce overall computational costs, approximations of gradient and Hessian are employed. 
The first variant (\cref{alg:inexact_withlinesearch}) is a line-search method in which only approximate gradient and Hessian information is needed at each step, but it resorts to the use of exact function values in performing a backtracking line search at each iteration.  
This requirement is not ideal, since exact evaluation of the objective function can be prohibitive. 
To partially remedy this situation, we propose a second variant (\cref{alg:fixed_stepsize}) which, by employing constant step-sizes, obviates the need for exact evaluations of functions, gradients, or Hessians.
The main drawback of this variant is that the fixed step-size depends on bounds on problem-dependent quantities. 
While these are available in several problems of interest in machine learning and statistics (see \cref{tab:L_H_bound,tab:K_H_K_g_bound}), they may be hard to estimate for other practical problems. 
Moreover, the step-sizes obtained from these bounds tend to be conservative, a situation that arises often in fixed-step optimization methods.}

\refboth{For both these algorithms, we show that the convergence and complexity properties of the original exact algorithm from \cite{royer2018newton} are largely retained.} 
Specifically, to achieve ($\epsilon$, $\sqrt{\epsilon}$)-optimality (see Definition~\ref{def:optimality} below) under Condition \ref{cond:opt_epsilong_epsilonh} on gradient and Hessian approximations (see below, in Section \ref{sec:opt}), we show the following.
\begin{itemize}
	\item Inexact Newton-CG with backtracking line search (\cref{alg:inexact_withlinesearch}), achieves the optimal iteration
	complexity of $\bigO(\epsilon^{-3/2})$; see Section \ref{sec:opt}. 
	\item Inexact Newton-CG in which a predefined step size replaces the backtracking line searches (\cref{alg:fixed_stepsize}) achieves the same optimal iteration
	complexity of $\bigO(\epsilon^{-3/2})$; see Section~\ref{sec:fixed_step_size}. 
	\item We obtain estimates of oracle complexity in terms of $\epsilon$ for both variants.
	\item The accuracy required in our gradient approximation changes adaptively with the current gradient size.
	One consequence of this feature is to allow cruder gradient approximations in the regions with larger gradients, translating to a more efficient algorithm overall.
	\item We empirically illustrate the advantages of our methods on several real datasets; see Section \ref{sec:numerical_experiments}. 
\end{itemize}

We note that \cref{alg:inexact_withlinesearch} \refboth{may not be computationally feasible as written, because 
the backtracking line searches require repeated (exact) evaluation of $f$.
This requirement} may not be practical in situations in which \refboth{exact evaluations of $ f$} are impractical.
By contrast, \cref{alg:fixed_stepsize} does not assume such knowledge and can be implemented strictly as written, given knowledge of the appropriate Lipschitz constant. The steplengths used in Algorithm~\ref{alg:fixed_stepsize} are, however, quite conservative, and better computational results will almost certainly be obtained with Algorithm~\ref{alg:inexact_withlinesearch}, modified to use \refboth{ approximations to $f(\x)$}; see the numerical examples in \cref{sec:numerical_experiments}.

\section{Algorithms and analysis}
\label{sec:main_result}

We describe our algorithms and present our main theoretical results in this section.
We start with background (Section~\ref{sec:notation_and_assumption}) and important technical ingredients (Section~\ref{sec:main_ingredients}),  and then we proceed to our two main algorithms (Section~\ref{sec:opt} and Section~\ref{sec:fixed_step_size}).

\subsection{Notation, definitions, and assumptions}
\label{sec:notation_and_assumption}
Throughout this paper, scalar constants are denoted by regular lower-case and upper-case letters, e.g., $c$ and $K$. 
We use bold lowercase and blackboard bold uppercase letters to denote vectors and matrices, e.g., $\a$ and $\mA$, respectively. 
The transpose of a real vector $\a$ is denoted by $ \a^{T} $.
For a vector $\a$, and a matrix $\mA$, $\|\a\|$ and $\|\mA\|$ denote the vector $\ell_{2}$ norm and the matrix spectral norm, respectively.
Subscripts (as in $\a_{t}$) denote iteration counters. 
The smallest eigenvalue of a symmetric matrix $\mA$ is denoted by $\lambda_{\min}(\mA)$. 
For any $ \x,\y \in \R^{d}$, $ \left[\x, \y\right] $ denotes the line segment between $\x$ and $\y$, i.e., $\left[\x, \y\right] = \left\{\z \mid \z = \x + \tau (\y - \x), \; 0 \leq \tau \leq 1 \right\}$.

We are interested in expressing certain bounds in terms of their dependence on the small positive convergence tolerance $\epsilon$, especially on certain negative powers of this quantity, ignoring the dependence on all other quantities in the problem, such as dimension, Lipschitz constants, etc. For example, we use $\OM(\epsilon^{-1})$ to denote a bound that depends linearly on $\epsilon^{-1}$ and $\tilde\OM(\epsilon^{-1})$ for linear dependence on $\epsilon^{-1} |\log\epsilon|$.

For nonconvex problems, the determination of near-optimality can be much more complicated than for convex problems; see the examples of~\cite{murty1987some,hillar2013most}. 
In this paper, as in earlier works (see for example \cite{royer2018newton}), we make use of approximate second-order optimality, defined as follows.
\begin{definition}[$(\epsilong,\epsilonh)$-optimality]\label{def:optimality}
	Given $0<\epsilong,\epsilonh<1$, $\x$ is an $(\epsilong,\epsilonh)$-optimal solution of \eqref{eqn:basic_problem},~if
% 	\footnote{Throughout this paper, $\|\cdot\|$ is the $\ell_2$ norm by default, and $\lmin(\cdot)$ is the minimum eigenvalue.}
	\begin{align}\label{cond:stop_cr}
	\|\nabla f(\x)\| \leq \epsilong \quad \text{and} \quad \lmin( \nabla^2 f(\x) )  \geq -\epsilonh.
	\end{align}
\end{definition}

\begin{assumption}\label{ass:lip}
The smooth nonconvex function $f$ is bounded below by the finite value $\flow$. \reftwo{It also has compact sub-level sets, i.e., the set $ \mathcal{L}(\x_{0}) = \left\{\x \mid f(\x) \leq f(\x_{0})\right\} $ is compact.} 
Moreover, on an open set $\mathcal{B} \subset \R^n$ containing all line segments $[\x_k,\x_k+\d_k]$ for iterates $\x_k$ and search directions $\d_k$ generated by our algorithms, the objective function has Lipschitz continuous gradient and Hessian, that is, there are positive constants $0 < L_g < \infty$ and $0 < L_H < \infty$ such that for any $ \x,\y \in \mathcal{B} $, we have
	\[
	\norm{\nabla f(\x) - \nabla f(\y)} \le L_g \norm{\x - \y} \quad \text{and} \quad
	\norm{\nabla^2 f(\x) - \nabla^2 f(\y)} \le L_H \norm{\x - \y} .
	\]
\end{assumption} 
\refone{Although \cref{ass:lip} is typical in the optimization literature, it nonetheless implies a somewhat strong smoothness assumptions on the function. Some related works on various Newton-type methods, e.g., \cite{bellavia2019adaptive,bellavia2021stochastic}, obtain second-order complexity guarantees that require only Lipschitz continuity of the Hessian. It would be interesting to investigate whether our analysis can be modified to allow for such relaxations. We leave such investigations for future work.}

Consequences of Lipschitz continuity of the Hessian, which we will use in later results, include the following bounds for any $\x,\y \in \mathcal{B}$:
\begin{subequations}
\begin{align}
    \label{eq:lipH}
& \norm{ \nabla f(\x) - \nabla f(\y) - \nabla^2 f(\y)(\x-\y)} \le \frac{L_H}{2} \norm{\x-\y}^2 \\
\label{eq:lipH2}
& f(\x) \le f(\y) + \nabla f(\y)^T(\x-\y) + \frac12 (\x-\y)^T\nabla^2 f(\y)(\x-\y) + \frac{L_H}{6} \| \x-\y\|^3.
\end{align}
\end{subequations}
\refone{An interesting avenue for future research is to try to replace these Lipschitz continuity conditions with milder variants in which the gradient and/or Hessian are required to maintain Lipschitz continuity only along a given set of directions, e.g., the piecewise linear path generated by the iterates such as the corresponding assumption in \cite{xuNonconvexTheoretical2017}. 
Our current proof techniques do not allow for such relaxations, but we will look into possibility in future work.}

For our inexact Newton-CG algorithms, we also require that the approximate gradient and Hessian satisfy the following conditions, for prescribed positive values $ \deltagt $ and $ \deltah $.
\begin{condition}\label{cond:appr_gh}
	For given $\deltagt$ and $\deltah$, we say that the approximate gradient $\g_t$ and Hessian $\H_t$ at iteration $t$ are $ \deltagt$-accurate and  $\deltah$-accurate if 
	\[
	\|\g_t - \nabla f(\x_t)\| \leq \deltagt \quad \text{and} \quad\|\H_t - \nabla^2 f(\x_t)\| \leq \deltah, 
	\]
	respectively.
\end{condition}

Under these assumptions and conditions, it is easy to show that there exist constants  $U_g$ and $U_H$ such that the following are satisfied for all iterates $\x_t$ in the set defined in Assumption~\ref{ass:lip}:
\begin{equation} \label{eq:bounds}
\|\g_t\| \leq U_g\quad \text{and} \quad\|\H_t\| \leq U_H.
\end{equation}

\subsection{Key ingredients of the Newton-CG method}
\label{sec:main_ingredients}

We present the two major components from \cite{royer2018newton} that are also used in our inexact variant of the Newton-CG algorithm.
The first ingredient, Procedure~\ref{alg:capped_cg} (referred to in some places as ``Capped CG''), is a version of the conjugate gradient~\citep{shewchuk1994introduction} algorithm that is used to solve a damped Newton system of the form $\bar \H \d = -\g$, where $\bar\H = \H + 2\epsilon \eye$ for some positive parameter $\epsilon$.
Procedure~\ref{alg:capped_cg} is modified to detect indefiniteness in the matrix $\H$ and, when this occurs, to return a direction along which the curvature of $\H$ is at most $-\epsilon$.
The second ingredient, 
Procedure~\ref{alg:minimum_eigenvalue} (referred to as the ``Minimum Eigenvalue Oracle'' or ``MEO''), checks whether a direction of  negative curvature (less than $-\epsilon$ for a given positive argument $\epsilon$) exists for the given matrix $\H$. 
We now discuss each of these procedures in more detail.

\begin{algorithm}[!ht]
\footnotesize
\floatname{algorithm}{Procedure}
    \caption{Capped Conjugate Gradient}
    \label{alg:capped_cg}
    \begin{algorithmic}[1]
        \STATE {\bf Inputs:} Symmetric Matrix $\H\in \R^{d\times d}$, vector $\g \ne 0$; damping parameter $\epsilon \in (0,1)$; ; desired accuracy $\zeta \in (0, 1)$;
        \STATE {\bf Optional input:} positive scale $M$ (set to 0 if not provided) 
        \STATE {\bf Outputs:} $\dtype$, $\d$
        \STATE {\bf Secondary Output:} $M$, $\kappa$, $\tilde \zeta$, $\tau$, $T$
        \STATE Set
        \[
        \bar\H:=\H + 2\epsilon, \quad \kappa:=\frac{M+2\epsilon}{\epsilon}, \quad \tilde\zeta:=\frac{\zeta}{3\kappa}, 
        \quad T := \frac{4\kappa^4}{(1 - \sqrt{1-\tau})^2}, \quad \tau :=\frac{1}{\sqrt\kappa + 1};
        \] 
        \STATE $\y_0 \gets 0, \rr_0 \gets \g, \p_0 \gets -\g, j \gets 0$
        \IF {$\p_0^T\bar\H \p_0 < \epsilon \norm{\p_0}^2$}
        \STATE Set $\d = p_0$ and terminate with $\dtype=\NC$;
        \ELSIF {$\|\H\p_0\| > M\|\p_0\|$}
        \STATE $M \gets \|\H\p_0\| / \|\p_0\|$ and update $\kappa$, $\tilde \zeta$, $\tau$, $T$;
        \ENDIF
        \WHILE {TRUE}
        \STATE $\alpha_j \gets \rr_j^T\rr_j/\p_j^T\bar\H \p_j$;~~(Traditional CG Begins)
        \STATE $\y_{j+1} \gets \y_j + \alpha_j \p_i$;
        \STATE $\rr_{j+1} \gets \rr_j + \alpha_j \bar\H \p_j$;
        \STATE $\beta_{j+1} \gets \rr_{j+1}^T \rr_{j+1} /\rr_j^T\rr_j$;
        \STATE $\p_{j+1} \gets  -\rr_{j+1} + \beta_{j+1} \p_j$;~~(Traditional CG Ends)
        \STATE $j\gets j+1$;
        \IF {$\max (\|\H\p_j\| / \|\p_j\|, \|\H\y_j\| / \|\y_j\|, \|\H\rr_j\| / \|\rr_j\|)>M$}
        \STATE $M \gets \max (\|\H\p_j\| / \|\p_j\|, \|\H\y_j\| / \|\y_j\|, \|\H\rr_j\| / \|\rr_j\|) $ and update $\kappa$, $\tilde \zeta$, $\tau$, $T$;
        \ENDIF
        \IF {$\y_j^T\bar\H \y_j \le \epsilon \norm{\y_j}^2$} \label{line:nca}
        \STATE Set $\d\gets \y_j$ and terminate with $\dtype=\NC$;
        \ELSIF {$\norm{\rr_j} \le\hat\zeta\norm{\rr_0}$}
        \STATE Set $\d\gets \y_j$ and terminate with $\dtype=\SOL$;
        \ELSIF {$\p_j^T\bar\H \p_j \le \epsilon \norm{\p_j}^2$} \label{line:ncb}
        \STATE Set $\d\gets \p_j$ and terminate with $\dtype=\NC$;
        \ELSIF {$\norm{\rr_j} \ge \sqrt T (1 - \tau)^{j/2} \norm{\rr_0}$} \label{line:ncc}
            \STATE Compute $\alpha_j, \p_{j+1}$ as in the main loop above; \label{line:nc1}
        \STATE Find $i\in\{0,\cdots,j-1\}$ such that \label{line:nc2}
        \begin{equation}\label{eq:capped_cg_accum}
        \frac{(\y_{j+1} - \y_i)^T\bar\H (\y_{j+1} -\y_i)}{\norm{\y_{j+1}-\y_i}^2} \le \epsilon;    
        \end{equation}
        \STATE Set $\d \gets \y_{j+1} - \y_i$ and terminate with $\dtype=\NC$; \label{line:nc3}
        \ENDIF
        \ENDWHILE
        \STATE {\bf Return:} $\d$
    \end{algorithmic}
\end{algorithm}
\paragraph{Procedure~\ref{alg:capped_cg} (Capped-CG).}
The well-known classical CG algorithm \citep{shewchuk1994introduction} is used to solve linear systems involving positive definite matrices.
However, this positive-definite requirement is often violated during the iterations for non-convex optimization due to the indefiniteness of Hessians encountered at some iterates. 
Capped-CG, proposed by \cite{royer2018newton} and presented in Procedure~\ref{alg:capped_cg} for completeness, is an original way to leverage and detect such negative curvature directions, when they are encountered during CG iterations.

Lines 13-17 in Procedure~\ref{alg:capped_cg} contain the standard CG operations.
When $\H \succeq -\epsilon \I$, the tests in lines 22, 26, and 28 that indicate negative curvature will not be activated, and Capped-CG will return an approximate solution $\d \approx -\bar\H^{-1}\g$.  
However, when $\H \not\succeq -\epsilon \I$, Capped-CG will identify and return a direction of ``sufficient negative curvature'' --- a direction $d$  satisfying $\d^T\H\d \leq -\epsilon\|\d\|^2$.
Such a negative curvature direction is obtained under two circumstances. 
First, when the intermediate step (either $\y_j$ or $\p_j$) satisfies the negative curvature condition, that is, $\d^T\bar\H \d \leq -\epsilon \|\d\|^2$ (Lines \ref{line:nca} and \ref{line:ncb}), Procedure~\ref{alg:capped_cg} will be terminated and the intermediate step will be returned. 
Second, when the residual, $\rr_j$, decays at a slower rate than anticipated by standard CG analysis (Line \ref{line:ncc}), a negative curvature direction can be recovered by the procedure of Lines \ref{line:nc1}, \ref{line:nc2}, and \ref{line:nc3}.
Note that Procedure~\ref{alg:capped_cg} can be called with an optional input $M$, which is an upper bound on $\|\H\|$. 
However, even without a priori knowledge of this upper bound, M can be updated so that at any point in the execution of the procedure, M is an upper bound on the maximum curvature of $ \H $ revealed to that point. 
Other parameters ($\kappa$, $\tilde \zeta$, $\tau$, $T$) are also updated whenever the value of M changes. 
It is not hard to see that  $M$ is bounded by $U_\H$ throughout the execution of Procedure~\ref{alg:capped_cg}, provided that if an initial value of $M$ is supplied to this procedure, this value satisfies  $M \le U_\H$.

Lemma~\ref{lemma:capped_cg_iter_bounded} gives a bound on the number of iterations performed by Procedure~\ref{alg:capped_cg}. 
\begin{lemma}[{\citet[Lemma 1]{royer2018newton}}] 
\label{lemma:capped_cg_iter_bounded}
The number of iterations of Procedure~\ref{alg:capped_cg} is bounded by 
\[
    \min \, \left\{ d, J(M, \epsilon, \zeta) \right\},
\]
where $J = J(M,\epsilon, \zeta)$ is the smallest integer such that $\sqrt T (1-\tau)^{J/2} \leq \hat \zeta$. The number of matrix-vector products required is bounded by $2\min\{d, J(M, \epsilon, \zeta)\}+1$, unless all iterates $\y_i,~i=1,2,\ldots$ are stored, in which case it is $\min\{d, J(M, \epsilon, \zeta)\} + 1$.
For the upper bound of $J(M, \epsilon, \zeta)$, we have
\begin{equation}
    J(M, \epsilon, \zeta) \leq \min \, \left\{d, \tilde\OM(\epsilon^{-1/2})\right\}.
\end{equation}
\end{lemma}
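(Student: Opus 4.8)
The plan is to establish the two iteration bounds $d$ and $J(M,\epsilon,\zeta)$ separately, then read off the matrix--vector product count from the recurrences of Procedure~\ref{alg:capped_cg}, and finally estimate $J$ asymptotically by substituting the definitions of $\kappa$, $\tau$, $T$, and $\hat\zeta$. The organizing observation is that, as long as none of the negative-curvature safeguards has fired, the triples $(\y_j,\rr_j,\p_j)$ are exactly the standard conjugate-gradient iterates for the shifted system $\bar\H\d=-\g$ with $\bar\H=\H+2\epsilon\I$, so classical CG facts transfer verbatim and the capped modifications can only make the procedure stop sooner.

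For the bound by $J$, I would argue on the iteration index. Suppose the procedure has completed iteration $j$ without terminating. Then, in particular, the \SOL\ test failed, so $\norm{\rr_j}>\hat\zeta\norm{\rr_0}$, and the ``slow residual decay'' test on line~\ref{line:ncc} failed, so $\norm{\rr_j}<\sqrt{T}(1-\tau)^{j/2}\norm{\rr_0}$. Since $\rr_0=\g\neq\vzero$, combining these gives $\hat\zeta<\sqrt{T}(1-\tau)^{j/2}$, which by the definition of $J$ as the smallest integer with $\sqrt{T}(1-\tau)^{J/2}\le\hat\zeta$ forces $j<J$. Hence the procedure terminates at some iteration $j\le J$. (That the search in line~\ref{line:nc2} always succeeds in locating an index $i$ with the stated curvature property is a separate structural fact about the CG iterates, which I would take as given.)

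For the bound by $d$: if no curvature test (lines~\ref{line:nca},~\ref{line:ncb},~\ref{line:ncc}) ever fires, the iterates coincide with standard CG on $\bar\H\d=-\g$, and on the subspace they span $\bar\H$ acts positive-definitely (else a curvature test would have triggered); standard CG then drives $\rr_j$ to zero within $d$ steps --- once the iterates span all of $\R^d$ --- at which point the \SOL\ test fires. If a curvature test fires earlier, the procedure stops earlier still. Either way the iteration count is at most $d$, and with the previous paragraph it is at most $\min\{d,J(M,\epsilon,\zeta)\}$. For the matrix--vector products, each pass through the main loop uses one product $\bar\H\p_j$, from which (using $\H=\bar\H-2\epsilon\I$ and the CG recurrences $\y_{j+1}=\y_j+\alpha_j\p_j$, $\rr_{j+1}=\rr_j+\alpha_j\bar\H\p_j$) most of the quantities needed by the safeguards and the $M$-updates can be maintained with at most one additional product per iteration; retaining the $\y_i$ and their images lets the line~\ref{line:nc2} search reuse cached products, and the detailed accounting behind the $2\min\{d,J\}+1$ versus $\min\{d,J\}+1$ counts is exactly as in \citet[Lemma~1]{royer2018newton}.

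Finally, for the asymptotics of $J$: since $M$ stays bounded by $U_\H$ throughout Procedure~\ref{alg:capped_cg} (any supplied initial $M$ satisfying $M\le U_\H$), we have $\kappa=(M+2\epsilon)/\epsilon\le(U_\H+2\epsilon)/\epsilon=\OM(\epsilon^{-1})$. With $\tau=1/(\sqrt{\kappa}+1)$, the elementary bounds $-\ln(1-\tau)\ge\tau=\Theta(\kappa^{-1/2})$ and $1-\sqrt{1-\tau}\ge\tau/2$ give $\ln T=\ln(4\kappa^4)+2\ln\bigl(1/(1-\sqrt{1-\tau})\bigr)=\OM(\ln\kappa)$ and $\ln(1/\hat\zeta)=\OM(\ln(\kappa/\zeta))$, so the smallest integer $J$ with $\tfrac12\ln T+\tfrac{J}{2}\ln(1-\tau)\le\ln\hat\zeta$ satisfies
\[
J\;\le\;1+\frac{\ln T+2\ln(1/\hat\zeta)}{-\ln(1-\tau)}\;=\;\OM\bigl(\sqrt{\kappa}\,\ln(\kappa/\zeta)\bigr)\;=\;\tilde\OM(\epsilon^{-1/2}),
\]
which combined with the $d$ bound yields $\min\{d,J(M,\epsilon,\zeta)\}\le\min\{d,\tilde\OM(\epsilon^{-1/2})\}$. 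The main obstacle here is not conceptual but one of bookkeeping: checking that the capped curvature tests never let the procedure run longer than plain CG (the role of lines~\ref{line:nca} and~\ref{line:ncb}), and pinning down exactly which of $\bar\H\p_j$, $\bar\H\y_j$, $\bar\H\rr_j$ and the line~\ref{line:nc2} quantities come for free from the recurrences --- which is where the factor $2$ (versus $1$ with storage) in the matrix--vector count originates.
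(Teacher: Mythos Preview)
The paper does not supply its own proof of this lemma; it is quoted directly from \citet[Lemma~1]{royer2018newton} and stated without argument. Your sketch is a faithful reconstruction of the standard reasoning --- the bound by $J$ from combining the failed \SOL\ and slow-residual tests, the bound by $d$ from finite termination of CG, and the $\tilde\OM(\epsilon^{-1/2})$ estimate from expanding $\kappa$, $\tau$, $T$, $\hat\zeta$ --- so there is nothing in the present paper to compare your approach against.
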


When the slow decrease in residual is detected (Line 21), a direction of negative curvature for $\H$ can be extracted from the previous intermediate solutions, as the following result describes.
\begin{lemma}[{\citet[Theorem 2]{royer2018newton}}]
\label{lemma:capped_cg_last_condition_gaurantee}
Suppose that the loop of Procedure~\ref{alg:capped_cg} terminates with $j=\hat J$, where
\[
    \hat J \in \{1,2,\ldots, \min\{n, J(M, \epsilon, \zeta)\}\}
\]
satisfies
\[
    \|r_{\hat J}\| > \max\{\hat\zeta, \sqrt T (1-\tau)^{\hat J/2}\}\|r_0\|.
\]
Suppose further that $y_{\hat J}^T\bar \H y_{\hat J} \geq \epsilon \|y_{\hat J}\|^2$, so that $y_{\hat J + 1}$ is computed. 
Then we have 
\[
    \frac{(y_{\hat J +1} - y_i)^T\bar \H (y_{\hat J +1} - y_i)}{\|y_{\hat J +1} - y_i\|^2} < \epsilon,~~~~for~~some~~i\in\{0,\ldots,\hat J -1\}.
\]
\end{lemma}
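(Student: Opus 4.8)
The plan is to argue by contradiction: I will show that if \eqref{eq:capped_cg_accum} failed for every $i$, then the residual $\rr_{\hat J}$ would have to decay at the nominal CG rate, contradicting the standing hypothesis $\|\rr_{\hat J}\| > \sqrt T(1-\tau)^{\hat J/2}\|\rr_0\|$. So suppose $(\y_{\hat J+1} - \y_i)^T\bar\H(\y_{\hat J+1} - \y_i) \ge \epsilon\|\y_{\hat J+1} - \y_i\|^2$ for all $i \in \{0,1,\dots,\hat J-1\}$. First I would observe that the same inequality also holds for $i=\hat J$: the loop reached iteration $\hat J$ and did not exit at Line~\ref{line:ncb}, so $\p_{\hat J}^T\bar\H\p_{\hat J} > \epsilon\|\p_{\hat J}\|^2$, while $\y_{\hat J+1}-\y_{\hat J} = \alpha_{\hat J}\p_{\hat J}$ with $\alpha_{\hat J} > 0$; taking $i=0$ and recalling $\y_0=\0$ also gives $\y_{\hat J+1}^T\bar\H\y_{\hat J+1} \ge \epsilon\|\y_{\hat J+1}\|^2$. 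Moreover, since the loop reached iteration $\hat J$ without earlier termination, $\p_j^T\bar\H\p_j > 0$ for all $j \le \hat J$, so the standard CG identities hold \emph{exactly} for the iterates $\y_0,\dots,\y_{\hat J+1}$, the $\bar\H$-conjugate directions $\p_0,\dots,\p_{\hat J}$, and the mutually orthogonal residuals $\rr_0,\dots,\rr_{\hat J+1}$: namely $\rr_j = \g + \bar\H\y_j$, $\alpha_j = \|\rr_j\|^2/(\p_j^T\bar\H\p_j)$, $\y_{j+1}-\y_i = \sum_{k=i}^{j}\alpha_k\p_k$, and $\y_j$ minimizes the quadratic $q(\y) := \tfrac12\y^T\bar\H\y + \g^T\y$ over the Krylov subspace $\KM_j := \spann\{\g,\bar\H\g,\dots,\bar\H^{j-1}\g\}$.

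The core of the proof is to promote these pointwise curvature inequalities into an \emph{effective conditioning} statement: $\bar\H$ restricted to $\KM_{\hat J+1}$ behaves like a positive definite operator with spectrum contained in $[\epsilon,\,M+2\epsilon]$. The upper endpoint is immediate, since Procedure~\ref{alg:capped_cg} always maintains $M \ge \|\H\|$ and hence $\|\bar\H\| \le M+2\epsilon$. For the lower endpoint I would use the conjugacy of the $\p_k$ to write $(\y_{\hat J+1}-\y_i)^T\bar\H(\y_{\hat J+1}-\y_i) = \sum_{k=i}^{\hat J}\alpha_k^2\,\p_k^T\bar\H\p_k = \sum_{k=i}^{\hat J}\alpha_k\|\rr_k\|^2$, and then track by induction how the norms $\|\y_j-\y_i\|$, $\|\p_j\|$, and $\|\rr_j\|$ propagate through the CG recursions under the assumption that every one of these Rayleigh quotients is at least $\epsilon$, so as to certify that no direction in $\KM_{\hat J+1}$ has curvature below $\epsilon$. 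With the effective conditioning established, the classical CG convergence estimate — obtained by substituting a scaled Chebyshev polynomial into the Krylov-minimization property of $\y_j$ — yields $\|\y^\star-\y_j\|_{\bar\H} \le 2\big((\sqrt\kappa-1)/(\sqrt\kappa+1)\big)^{j}\|\y^\star-\y_0\|_{\bar\H}$, where $\y^\star$ solves $\bar\H\y = -\g$ on $\KM_{\hat J+1}$ and $\kappa = (M+2\epsilon)/\epsilon$ is exactly the quantity defined in Procedure~\ref{alg:capped_cg}. Passing from the $\bar\H$-norm of the error $\y_j-\y^\star$ to the Euclidean norm of the residual $\rr_j = \bar\H(\y_j-\y^\star)$ costs one more factor of $\sqrt\kappa$, and the parameters $T$ and $\tau$ are calibrated precisely so that $2\sqrt\kappa\big((\sqrt\kappa-1)/(\sqrt\kappa+1)\big)^{j} \le \sqrt T(1-\tau)^{j/2}$ for every $j \ge 0$. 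Setting $j=\hat J$ gives $\|\rr_{\hat J}\| \le \sqrt T(1-\tau)^{\hat J/2}\|\rr_0\|$, contradicting the hypothesis.

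I expect the main obstacle to be exactly the middle step — inferring that the Krylov iteration proceeds ``as if'' $\lambda_{\min}(\bar\H|_{\KM_{\hat J+1}}) \ge \epsilon$ from the fact that only finitely many specific vectors (the $\y_{\hat J+1}-\y_i$, together with the $\y_j$) have Rayleigh quotient at least $\epsilon$. One cannot conclude $\bar\H|_{\KM_{\hat J+1}} \succeq \epsilon I$ outright, because the normalized search directions $\p_k/\|\p_k\|$ are $\bar\H$-conjugate but not orthonormal, so a diagonal bound in that basis does not transfer to eigenvalues; the implication has to be threaded carefully through the CG recursion, bounding the relevant norm ratios at each step. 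This is the delicate part of the analysis (cf. the proof of Theorem~2 in \cite{royer2018newton}); the remaining ingredients — the bookkeeping of standard CG identities, the Chebyshev estimate, and the elementary verification that $T$ and $\tau$ absorb the $\sqrt\kappa$ factors — are routine.
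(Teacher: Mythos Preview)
The paper does not supply its own proof of this lemma; it is imported verbatim as \citet[Theorem~2]{royer2018newton}, so there is nothing in the present paper to compare against beyond the citation itself. Your sketch follows the same contradiction strategy as the cited proof: assume every Rayleigh quotient $(\y_{\hat J+1}-\y_i)^T\bar\H(\y_{\hat J+1}-\y_i)/\|\y_{\hat J+1}-\y_i\|^2$ is at least $\epsilon$, exploit the exact CG identities (valid because no earlier termination occurred), and derive the standard $\sqrt\kappa$-rate residual bound that contradicts $\|\rr_{\hat J}\| > \sqrt T(1-\tau)^{\hat J/2}\|\rr_0\|$.

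One point worth sharpening: your phrasing ``certify that no direction in $\KM_{\hat J+1}$ has curvature below $\epsilon$'' overstates what is actually established. The proof in \cite{royer2018newton} does \emph{not} show $\bar\H|_{\KM_{\hat J+1}} \succeq \epsilon I$ as an operator inequality (and as you yourself note, the $\bar\H$-conjugate basis is not orthonormal, so this would not follow from the diagonal bounds). Rather, it works directly with the scalar recursions for $\|\rr_j\|$, $\|\p_j\|$, $\alpha_j$, $\beta_j$ and shows by induction that the hypothesized curvature inequalities on the specific vectors $\y_{\hat J+1}-\y_i$ force the same telescoping bounds on $\|\rr_j\|/\|\rr_0\|$ that one would get from a genuine spectral lower bound. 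The Chebyshev argument is then applied to these scalar bounds, not to an operator inequality. Your proposal already flags this as ``the delicate part'' and points to the correct mechanism (tracking norm ratios through the recursion), so the gap is one of wording rather than substance; just be careful not to claim more than the induction actually delivers when you write it out.
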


Note that $d^T \bar\H d \le \epsilon \|d\|^2 \Longleftrightarrow d^T \H d \le -\epsilon \|d\|^2$.

Procedure~\ref{alg:capped_cg} is invoked by the Newton-CG procedure, Algorithm~\ref{alg:inexact_withlinesearch} (described in \cref{sec:opt}), when the current iterate $\x_k$ has $\| \g_k \| \ge \epsilong>0$. 
Procedure~\ref{alg:capped_cg} can either return the approximate Newton direction or a negative curvature one. 
After describing how this output vector is modified by Algorithm~\ref{alg:inexact_withlinesearch}, in the next section, we state a result  (Lemma~\ref{lemma:d_from_cappedcg}) about the properties of the resulting step.

In the case of $\|\g_k \|<\epsilong$,  Algorithm~\ref{alg:inexact_withlinesearch} calls Procedure~\ref{alg:minimum_eigenvalue} to explicitly seek a direction of sufficient negative curvature. 
We describe this procedure next.

\begin{algorithm}[H]
\floatname{algorithm}{Procedure}
\caption{Minimum Eigenvalue Oracle}
\label{alg:minimum_eigenvalue}
\begin{algorithmic}[1]
\STATE {\bf Inputs:} Symmetric matrix $\H\in\R^{d\times d}$, scalar $M \ge \lambda_{\max}(\H)$ and $\epsilon >0$;
\STATE Set $\delta \in [0,1)$;
\STATE {\bf Outputs:} Estimate $\lambda$ of $\lambda_{\min}(\H)$ such that $\lambda  \le - \epsilon/2$ and vector $\v$ with $\norm{\v} =1$ such that $\v^T\H\v = \lambda$ OR certificate that $\lambda_{\min}(\H) \ge -\epsilon$. The probability that the certificate is issued but $\lambda_{\min}(\H) < -\epsilon$ is at most $\delta$.
\end{algorithmic}
\end{algorithm}

\paragraph{Procedure~\ref{alg:minimum_eigenvalue} (Minimum Eigenvalue Oracle).}
This procedure searches for a direction spanned by the negative spectrum of a given symmetric matrix or, alternately, verifies that the matrix is (almost) positive definite. 
Specifically, for a given $\epsilon>0$, Procedure~\ref{alg:minimum_eigenvalue} finds a negative curvature direction $\v$ of $\H_k$ such that $\v^T\H\v\leq -\epsilon\|\v\|^2/2$, or else certifies that  $\H \succeq -\epsilon \I$. 
The probability that the certificate is issued but $\lambda_{\min} (\H) < -\epsilon$ is bounded above by some (small) specified value $\delta$.
As indicated in \cite{royer2018newton}, this minimum eigenvalue oracle can be implemented using the Lanczos process or the classical CG algorithm.  (In this paper, we choose the former.) 
Both of these approaches have the same complexity, given in the following result.
\begin{lemma}[{\citet[Lemma 2]{royer2018newton}}] 
\label{lemma:fail_min_eigen_oracle}
Suppose that the Lanczos method is used to estimate the smallest eigenvalue of $\H$ starting from a random vector drawn from the uniform distribution on the unit sphere, where $\|\H \| \le M$. 
For any $\delta \in (0,1)$, this approach finds the smallest eigenvalue of $H$ to an absolute precision of $\epsilon/2$, together with a corresponding direction $\v$, in  at most 
\begin{equation}\label{eqn:iter_neg}
    \min\, \left\{d, 1+ \left\lceil \frac{\ln(2.75d/\delta^2)}{2}\sqrt{\frac{M}{\epsilon}} \right\rceil \right\} \quad \mbox{iterations,}
\end{equation}
with probability at least $1-\delta$. Each iteration requires evaluation of a matrix-vector product involving $\H$.
\end{lemma}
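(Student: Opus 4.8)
The plan is to follow the classical Kuczy\'nski--Wo\'zniakowski analysis of the randomized Lanczos method, reducing the smallest-eigenvalue question for $\H$ to a largest-eigenvalue question for a positive semidefinite matrix. Since $\|\H\|\le M$, the matrix $\A := M\I - \H$ satisfies $\A \succeq 0$ and $\lambda_{\max}(\A) = M - \lambdamin(\H) \le 2M$. For any starting vector $\v_0$, the Lanczos iterates span the Krylov subspaces $\mathcal{K}_k(\A,\v_0) = \mathcal{K}_k(\H,\v_0)$, and the Ritz values of $\A$ are exactly $M$ minus those of $\H$; hence a Ritz estimate of $\lambda_{\max}(\A)$ accurate to within $\epsilon/2$ yields, via the corresponding Ritz vector $\v$ (normalized so that $\norm{\v}=1$), an estimate $\lambda = \v^T\H\v$ of $\lambdamin(\H)$ with $|\lambda - \lambdamin(\H)| \le \epsilon/2$, and in particular $\lambda \le \lambdamin(\H) + \epsilon/2$. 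Each Lanczos step costs one matrix--vector product with $\A$, i.e., one with $\H$, plus $\bigO(d)$ arithmetic, and after at most $d$ steps the Krylov space becomes $\A$-invariant and the extreme eigenvalue is found exactly; this produces the $\min\{d,\cdot\}$ in the bound.

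Next I would invoke the deterministic Lanczos/Chebyshev error bound: writing $\lambda_1 \ge \lambda_2 \ge \cdots \ge \lambda_d \ge 0$ for the eigenvalues of $\A$ with unit eigenvector $u_1$ for $\lambda_1$, the $k$-step Ritz estimate $\xi_k$ obeys
\[
0 \le \lambda_1 - \xi_k \le \frac{\lambda_1 - \lambda_d}{\langle \v_0, u_1\rangle^2}\Big(\min_{\substack{p\in\mathcal{P}_{k-1}\\ p(\lambda_1)=1}}\ \max_{\lambda\in[\lambda_d,\lambda_2]} |p(\lambda)|\Big)^2 ,
\]
and the inner min--max is attained by a shifted, scaled Chebyshev polynomial. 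Because we need only absolute accuracy $\epsilon/2$, not resolution of the true spectral gap, I would split the spectrum at $\lambda_1 - \epsilon/2$: either $\xi_k \ge \lambda_1 - \epsilon/2$ already (nothing to prove), or the effective relative gap is at least $(\epsilon/2)/\lambda_1 \ge \epsilon/(4M)$, so the Chebyshev factor is bounded below by $T_{k-1}(1+\epsilon/(2M))^2$. Using $T_n(1+x) = \cosh\!\big(n\,\mathrm{arccosh}(1+x)\big) \ge \tfrac12 e^{\,n\sqrt{x}}$ then gives, up to an absolute constant, $\lambda_1 - \xi_k \le (2M/\langle \v_0,u_1\rangle^2)\,e^{-2(k-1)\sqrt{\epsilon/(2M)}}$.

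The only random quantity remaining is $\langle \v_0, u_1\rangle^2$, which I would control with the standard anti-concentration estimate for the uniform distribution on the sphere: $\Pr\big[\,|\langle \v_0,u_1\rangle| \le t\,\big]$ is at most an absolute constant times $t\sqrt d$, so with probability at least $1-\delta$ one has $\langle \v_0,u_1\rangle^2 \gtrsim \delta^2/d$. On this event, substituting into the bound above and requiring $\lambda_1 - \xi_k \le \epsilon/2$ and solving for $k$ gives $k - 1 = \bigO\big(\sqrt{M/\epsilon}\,\ln(d/\delta^2)\big)$; carrying the exact constants through the sharp Chebyshev inequality (rather than the crude $e^{\,n\sqrt x}$ bound) together with the sharp spherical anti-concentration constant then tightens this to $k \le 1 + \lceil \tfrac12 \ln(2.75\,d/\delta^2)\sqrt{M/\epsilon}\rceil$, and combining with the trivial $d$-step bound yields the stated iteration count with probability at least $1-\delta$, each iteration being one matvec with $\H$.

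I expect the main obstacle to be the gap-free regime of the second step: making the Chebyshev argument work without a genuine spectral gap (the spectrum-splitting device and the lower bound on $T_{k-1}(1+\Theta(\epsilon/M))$), and, more delicately, arranging the constants so that the dimension and failure-probability dependence collapses exactly into the single logarithmic factor $\ln(2.75\,d/\delta^2)$ with no leftover lower-order $\ln(M/\epsilon)$ term. This forces one to use the sharp forms of both the Chebyshev polynomial inequality and the spherical anti-concentration bound, rather than the order-of-magnitude versions sketched above.
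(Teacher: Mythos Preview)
The paper does not supply its own proof of this lemma: it is quoted verbatim as \citet[Lemma~2]{royer2018newton} and used as a black box, so there is nothing in the present paper to compare your argument against. Your sketch is the standard Kuczy\'nski--Wo\'zniakowski analysis (shift to a PSD matrix, Chebyshev bound on the Krylov Rayleigh quotient, anti-concentration for the overlap of a random spherical vector with the top eigenvector), which is exactly the machinery that underlies the cited result in \cite{royer2018newton}; so in spirit you are reproducing the intended proof rather than offering an alternative.

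One caveat on your write-up: the final sentence overpromises. The clean constant $2.75$ in the logarithm is inherited from the specific formulation in \cite{royer2018newton} (which in turn packages the Kuczy\'nski--Wo\'zniakowski constants in a particular way), and getting that exact numeral out of the Chebyshev and spherical-cap bounds requires more than ``carrying the exact constants through.'' If you actually try to recover $2.75$ from first principles you will find several places where the analysis loses small constant factors (the $\tfrac12 e^{n\sqrt{x}}$ lower bound on $T_n(1+x)$, the spherical anti-concentration constant, the factor-of-two slack in $\lambda_1 \le 2M$), and these do not obviously telescope to $2.75$. For the purposes of this paper that is immaterial, since the lemma is only used to justify the $\tilde{\mathcal{O}}(\epsilon^{-1/4})$ Hessian-vector-product count, but you should either cite the constant rather than claim to derive it, or be content with an unspecified absolute constant inside the logarithm.
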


\subsection{Inexact Newton-CG algorithm with line search} 
\label{sec:opt}

\begin{algorithm}[!ht]
\caption{Inexact Damped Newton-CG with Line Search}
\footnotesize
\label{alg:inexact_withlinesearch}
\begin{algorithmic}[1]
\STATE {\bf Inputs:} $\epsilon_g, \epsilon_H >0$; backtracking parameter $\theta \in(0,1)$; sufficient decrease parameter $\eta>0$; starting point $\x_0$; upper bound  on Hessian  norm $U_H>0$; accuracy  parameter $\zeta \in (0, \min\{1,U_H\})$;
\FOR {$k=0,1,2,\cdots$}
\IF {$\norm{\g_k}  \ge \epsilon_g$} 
\STATE Call Procedure~\ref{alg:capped_cg} with $\H = \H_k, M=U_H, \epsilon = \epsilon_H, \g = \g_k$ and accuracy parameter $\zeta$ to obtain $\d$ and $\dtype$;
\IF {$\dtype == \NC$}
\STATE \refboth{$\d_k \gets -\sgn(\d^T \g_k)\frac{|\d^T\H_k\d|}{\norm{\d}^2} \frac{\d}{\norm{\d}}$} and go to {\bf Line-Search} ;
\ELSE
\STATE $\d_k \gets \d$;
\IF{$\norm{\d_k} \le \epsilong/\epsilonh$} \label{line:threshold}
\STATE Call Procedure~\ref{alg:minimum_eigenvalue} with $\H = \H_k, M= U_H, \epsilon= \epsilon_H$ to obtain $\v$ (with $\norm{\v}=1$ and $\v^T \H_k \v \le -\epsilon_H/2$) or a certificate that $\lambda_{\min}(\H_k) \ge -\epsilonh$;
\IF {Procedure~\ref{alg:minimum_eigenvalue} certifies that $\lambda_{\min}(\H_k) \ge -\epsilonh$}
\STATE Terminate and return $\x_k+\d_k$; \label{line:term1}
\ELSE
\STATE \refboth{$\d_k \gets -\left( \sgn(\v^T\g_k) |\v^T\H_k\v| \right) \v$}, $\dtype \gets \NC$, and go to {\bf Line-Search}; \label{line:newdk}
\ENDIF
\ELSE
\STATE Go to {\bf Line-Search}; 
\ENDIF \label{line:threshold_end}
\ENDIF
\ELSE
\STATE $\dtype \gets \NC$;
\STATE Call Procedure \ref{alg:minimum_eigenvalue} with $\H = \H_k, M= U_H, \epsilon= \epsilon_H$ to obtain $\v$ with $\norm{\v}=1$ and $\v^T \H_k \v \le -\epsilon_H/2$ or a certificate that $\lambda_{\min}(\H_k) \ge -\epsilonh$;
\IF {Procedure~\ref{alg:minimum_eigenvalue} certifies that $\lambda_{\min}(\H_k) \ge -\epsilonh$}
\STATE Terminate and return $\x_k$; \label{line:term2}
\ELSE
\STATE \refboth{$\d_k \gets -\sgn(\v^T\g_k) |\v^T\H_k\v| \v$} and go to {\bf Line-Search};
\ENDIF
\ENDIF
\STATE {\bf Line-Search:} 
\refboth{\IF {$\dtype == \SOL$}
\STATE Set $\alpha_k \gets \theta^{j_k}$, where $j_k$ is the smallest  nonnegative integer such that 
\begin{equation} \label{eq:suffdecr}
f(\x_k + \alpha_k \d_k) < f(\x_k) - \frac{\eta}{6}|\alpha_k|^3 \norm{\d_k}^3; 
\end{equation}
\ELSE 
\STATE Set $\alpha_k$ to be the first element of the sequence $1, -1, \theta, -\theta, \theta^2, -\theta^2, \theta^3, -\theta^3, \dotsc$ for which  \eqref{eq:suffdecr} holds;
\ENDIF
}
\STATE $\x_{k+1} \gets \x_k + \alpha_k \d_k$;
\ENDFOR
\end{algorithmic}
\end{algorithm}

\cref{alg:inexact_withlinesearch} shows our inexact damped Newton-CG algorithm, which calls Procedures~\ref{alg:capped_cg} and~\ref{alg:minimum_eigenvalue}. In this section, we establish worst case iteration complexity to achieve $(\epsilong,\epsilonh)$-optimality  according to \cref{def:optimality}. 
Under mild conditions on the approximate gradient and Hessian, the complexity estimate is the same as for the exact Newton-CG algorithm described in  \cite{royer2018newton}.

\refone{For \cref{alg:inexact_withlinesearch}, approximations of the Hessian and gradient can be used throughout. However, to obtain the step-size $ \alpha_k $, \cref{alg:inexact_withlinesearch} requires exact evaluation of the function. 
We avoid the need for these exact evaluations in  the fixed-step variant, \cref{alg:fixed_stepsize}, to be studied in \cref{sec:fixed_step_size}.}

Apart from the use of approximate Hessian and gradient, Lines  \ref{line:threshold}-\ref{line:threshold_end} constitute a notable difference between our algorithm and the exact counterpart of \cite{royer2018newton}, in which our method calls  Procedure~\ref{alg:minimum_eigenvalue} to obtain a direction of sufficient negative curvature when the direction $\d_k$ derived  from Procedure~\ref{alg:capped_cg} is small; specifically, $\|\d_k\|\leq \epsilong/\epsilonh$.
If such a direction is found, we perform a backtracking line search along with it.
Otherwise, if Procedure~\ref{alg:minimum_eigenvalue} certifies that no direction of sufficient negative curvature exists, we terminate and return the point $\x_k+\d_k$, which already satisfies the second-order optimality condition.
In theory, this modification is critical to obtaining the optimal worst-case complexity. 
\reftwo{In practice, however, we have observed that performing line-search with such $ \d_k $, despite the fact that $\|\d_k\|\leq \epsilong/\epsilonh$, results in acceptable progress in reducing the function. In other words, we believe that Lines 9-16 of \cref{alg:inexact_withlinesearch,alg:fixed_stepsize} serve a mainly theoretical purpose and can be safely omitted in practical implementations.}

\refboth{Another notable difference with previous versions of this general approach is the use of a ``bidirectional'' line search when $\d_k$ is a negative curvature direction. 
We do backtracking along both positive and negative directions, $\d_k$ and $-\d_k$, because we are unable to determine with certainty the sign of $\d_k^T \nabla f(\x_k)$, since we have access only to the approximation $\g_k$ of $\nabla f(x_k)$. 
This additional algorithmic feature causes only modest changes to the analysis of the function decrease along negative curvature directions, as we point out in the appropriate results below.}

We begin our complexity analysis with a result that summarizes important properties of the direction $\d_k$ that is derived from the capped CG algorithm, Procedure~\ref{alg:capped_cg}.
(The proof is identical to that of the cited result \cite[Lemma~3]{royer2018newton}, except that we use approximate values of the Hessian and gradient of $f$ here.)
\begin{lemma}[{\citet[Lemma~3]{royer2018newton}}] 
\label{lemma:d_from_cappedcg}
Suppose that Assumption~\ref{ass:lip} is satisfied.
Suppose that Procedure~\ref{alg:capped_cg} is invoked at an iterate $\x_k$ of Algorithm~\ref{alg:inexact_withlinesearch} (so that $\|\g_k\| \ge \epsilong>0$) with inputs $\H=\H_k$, $\g=\g_k$, $\epsilon=\epsilonh$, and $\zeta$. 
Suppose that $\d_k$ in Algorithm~\ref{alg:inexact_withlinesearch} is obtained from the output vector $\d$ of Procedure~\ref{alg:capped_cg}, after possible scaling and change of sign.
Then one of the two following statements holds.
\begin{enumerate}
\item $\dtype=\SOL$ and $\d_k=\d$ satisfies
\begin{subequations}
\begin{align}\label{eqn:pos}
\d_k^T \H_k \d_k \geq -\epsilonh\|\d_k\|^2,
\end{align}
\begin{align}\label{eqn:dk_sol_upper}
\|\d_k\| \leq 1.1 \epsilonh^{-1} \|\g_k\|,
\end{align}
\begin{align}\label{eqn:dk_sol_rk}
\|\hat \rr_k\|  \leq \frac12 \epsilonh\zeta\|\d_k\|,
\end{align}
\end{subequations}
where 
\begin{align}\label{eq:r_k}
    \hat \rr_k = (\H_k + 2\epsilonh\I)\d_k + \g_k.
\end{align}
\item $\dtype=\NC$ and $\d_k$ satisfies 
\[
\d_k = -\sgn(\d^T\g_k)\frac{|\d^T\H_k\d|}{\norm{\d}^2} \frac{\d}{\norm{\d}},
\]
and $\d_k$ satisfies
\begin{equation}\label{eqn:dk_nc}
\frac{\d_k^T\H_k\d_k}{\| \d_k \|^2} = -\| \d_k \| \le -\epsilonh.
\end{equation}
\end{enumerate}
\end{lemma}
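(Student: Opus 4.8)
The plan is to follow the argument of \citet[Lemma~3]{royer2018newton} essentially verbatim, with the only change being that $\H$ and $\g$ now denote the approximate Hessian $\H_k$ and approximate gradient $\g_k$ rather than their exact counterparts; because Procedure~\ref{alg:capped_cg} operates purely on the objects handed to it, none of its structural guarantees depend on where those objects came from. The argument splits into the two cases governed by the value of $\dtype$ returned by Procedure~\ref{alg:capped_cg}.

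First I would dispose of the $\dtype=\NC$ case. When Procedure~\ref{alg:capped_cg} returns $\dtype=\NC$, Algorithm~\ref{alg:inexact_withlinesearch} sets $\d_k = -\sgn(\d^T\g_k)\tfrac{|\d^T\H_k\d|}{\norm{\d}^2}\tfrac{\d}{\norm{\d}}$. The output vector $\d$ of the procedure satisfies $\d^T\bar\H\d \le \epsilonh\norm{\d}^2$ where $\bar\H = \H_k + 2\epsilonh\I$, which is equivalent (by the remark just before the lemma) to $\d^T\H_k\d \le -\epsilonh\norm{\d}^2 < 0$; hence $|\d^T\H_k\d| = -\d^T\H_k\d$. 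A direct computation of $\d_k^T\H_k\d_k/\norm{\d_k}^2$ using the definition of $\d_k$ — the scaling makes $\norm{\d_k} = |\d^T\H_k\d|/\norm{\d}^2$ and the Rayleigh quotient is invariant under scaling and sign change — yields $\d_k^T\H_k\d_k/\norm{\d_k}^2 = \d^T\H_k\d/\norm{\d}^2 = -\norm{\d_k}$, and combining with $\d^T\H_k\d/\norm{\d}^2 \le -\epsilonh$ gives \eqref{eqn:dk_nc}. This is purely algebraic and requires no use of Assumption~\ref{ass:lip}.

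For the $\dtype=\SOL$ case I would invoke the termination logic of Procedure~\ref{alg:capped_cg} directly. Termination with $\dtype=\SOL$ happens at line~24, so the tests at lines~22 and~26 failed at that iteration: the line-22 test failing gives $\y_j^T\bar\H\y_j > \epsilonh\norm{\y_j}^2$, i.e. $\d_k^T\bar\H\d_k > \epsilonh\norm{\d_k}^2$, which rearranges to \eqref{eqn:pos}; the line-24 test succeeding gives $\norm{\rr_j}\le\hat\zeta\norm{\rr_0}$ with $\rr_0 = \g_k$, and since $\rr_j = \bar\H\y_j + \g_k = \hat\rr_k$ by the CG recursion, and $\hat\zeta = \zeta/(3\kappa) \le \zeta\epsilonh/(3(U_H + 2\epsilonh))$ translates after bounding $\kappa$ into \eqref{eqn:dk_sol_rk} (this is where one uses $M \le U_H$ throughout Procedure~\ref{alg:capped_cg}, and a short estimate on the constant $\tfrac12$ versus $\tfrac13$). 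Finally, \eqref{eqn:dk_sol_upper} follows from \eqref{eqn:pos} and \eqref{eqn:dk_sol_rk}: writing $\g_k = \hat\rr_k - \bar\H\d_k$, taking inner product with $\d_k$, and using $\d_k^T\bar\H\d_k \ge \epsilonh\norm{\d_k}^2$ together with Cauchy--Schwarz on the $\hat\rr_k$ term bounds $\norm{\g_k}$ below by $(\epsilonh - \tfrac12\epsilonh\zeta)\norm{\d_k} \ge \tfrac12\epsilonh\norm{\d_k}$... actually one wants the constant $1/1.1$, so the estimate is $\norm{\g_k}\ge(1 - \zeta/2)\epsilonh\norm{\d_k}\ge(1/1.1)\epsilonh\norm{\d_k}$ using $\zeta < 1$, giving \eqref{eqn:dk_sol_upper}.

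The main obstacle is purely bookkeeping rather than conceptual: one must verify that every inequality in the original proof that was stated in terms of $\nabla f(\x_k)$ and $\nabla^2 f(\x_k)$ is in fact used by Procedure~\ref{alg:capped_cg} only through the generic symbols $\g$ and $\H$, so that substituting $\g_k,\H_k$ changes nothing — in particular that the constant $U_H$ plays the role the exact $\|\nabla^2 f(\x_k)\|$ bound played, which is guaranteed by \eqref{eq:bounds}. No genuinely new estimate is needed; the lemma is a transcription, and the cited reference supplies all the nontrivial constants.
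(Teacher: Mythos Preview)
Your proposal is correct and matches the paper's approach exactly: the paper does not give an independent proof but simply states that the argument is identical to \citet[Lemma~3]{royer2018newton} with $\g_k,\H_k$ substituted for the exact gradient and Hessian, which is precisely your plan. One small slip: in your derivation of \eqref{eqn:dk_sol_upper} you claim $(1-\zeta/2)\ge 1/1.1$ from $\zeta<1$, but this fails for $\zeta>2/11$; the constant $1.1$ comes instead from using the residual bound $\|\hat\rr_k\|\le\hat\zeta\|\g_k\|$ directly (with $\hat\zeta=\zeta/(3\kappa)$ small) rather than routing through \eqref{eqn:dk_sol_rk}, and the details are in the cited reference.
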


In order to establish the iteration complexity of Algorithm~\ref{alg:inexact_withlinesearch}, we first present a sufficient condition on the degree of the inexactness of the gradient and Hessian. 
\begin{condition}\label{cond:opt_epsilong_epsilonh} 
We require the inexact gradient $\g_k$ and Hessian $\H_k$ to satisfy Condition \ref{cond:appr_gh} with 
\[
\delta_{g, k} \leq \frac{1-\zeta}{8}\max\Big(\epsilong, \min\left(\epsilonh\|\d_k\|, \|\g_k\|, \|\g_{k+1}\|\right)\Big),~~~\text{and}~~~\delta_H \leq \left(\frac{1-\zeta}{4}\right)\epsilon_H.
\]
\end{condition}

One could simplify Condition \ref{cond:opt_epsilong_epsilonh}  to have an iteration-independent condition on $ \delta_{g,k} \equiv \delta_{g} $, namely,
\begin{align*}
	\delta_{g} \leq \frac{1-\zeta}{8} \epsilong.
\end{align*}
However, the adaptivity of the iteration-dependent version of Condition \ref{cond:opt_epsilong_epsilonh} through $\g_k$ and $\g_{k+1}$ offers practical advantages. 
Indeed, in many iterations, one can expect $\| \g_k \|$ and $\| \g_{k+1}\|$ to be of similar magnitudes. 
Also, as shown in Lemma~\ref{lemma:d_from_cappedcg}, we have $\|\d_k\| \leq 1.1 \epsilonh^{-1} \|\g_k\|$. 
Thus, the three terms in $\min(\epsilonh\|\d_k\|, \|\g_k\|, \|\g_{k+1}\|)$ are often  roughly of the same order, and usually larger than $\epsilong$.
These observations suggest that when the true gradient is large, we can employ loose approximations. 

Given Condition~\ref{cond:opt_epsilong_epsilonh}, the proofs of the complexity bounds boil down to three parts. 
First, we bound the decrease in the objective function $f(\x_k)$ (Lemma~\ref{lemma:dtype_sol_opt}) when taking the damped Newton step $\d_k$ (that is, when $\dtype=\SOL$ on return from Procedure~\ref{alg:capped_cg} and $\|\d_k\|$ is not too small). 
Second, we bound the decrease in the objective when a negative curvature direction is encountered in Procedure~\ref{alg:capped_cg} (Lemma~\ref{lemma:nc_from_cappedcg}) or Procedure~\ref{alg:minimum_eigenvalue} (Lemma~\ref{lemma:nc_both_procedure}). 
Third, for Lines 9-18 in Algorithm~\ref{alg:inexact_withlinesearch}, we show that the algorithm can be terminated after the update in Line \ref{line:term1}. 
In particular, when the update direction is sufficiently small from Procedure~\ref{alg:capped_cg} and a large negative curvature from Procedure~\ref{alg:minimum_eigenvalue} has not been detected, Line \ref{line:term1} terminates at a point satisfying the required optimality conditions (Lemma~\ref{lemma:terminate_step_condition_withlinesearch}). 

We start with the case in which an inexact Newton step is used.
\begin{lemma}
\label{lemma:dtype_sol_opt}
Suppose that \cref{ass:lip} is satisfied and that Condition \ref{cond:opt_epsilong_epsilonh} holds for all $k$.
Suppose that at iteration $k$ of \cref{alg:inexact_withlinesearch}, we have $\|\g_k\| \ge \epsilong$, so that Procedure~\ref{alg:capped_cg} is called. 
When Procedure~\ref{alg:capped_cg} outputs a direction $\d_k$ with $\dtype=\SOL$ and $\norm{\d_k} > {\epsilong}/\epsilonh$, 
then the backtracking line search requires at most $j_k\leq \jsol+1$ iterations, where
\[
    \jsol = \left\lceil \frac{1}{2} \log_\theta \left( \frac{3(1 - \zeta)\epsilonh^2}{4.4 U_g(L_H + \eta)} \right) \right\rceil,
\]
and the resulting step $\x_{k+1}=\x_k+\alpha_k\d_k$ satisfies 
\begin{equation} \label{eq:csol}
    f(\x_k) -f(\x_{k+1}) \geq \csol \max\left\{0, \min \left( \frac{(\|\g_{k+1} \| - \delta_{g,k} - \delta_{g,k+1})^3}{(2.5 \epsilonh)^3}, (2.5 \epsilonh)^3, \epsilong^{3/2}\right)\right\},
\end{equation}
where
\[
    \csol = \frac{\eta}{6} \min\left\{ \frac{1}{(1+2L_H)^{3/2}}, \left[ \frac{3\theta^2(1 - \zeta)}{4(L_H + \eta)}\right]^{3/2}  \right\}.
\]
\end{lemma}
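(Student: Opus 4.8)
The plan is to follow the structure of \citet[Lemma~3 / Section~3]{royer2018newton}, with the usual modifications to track the inexactness terms $\delta_{g,k}$ and $\deltah$. First I would establish the line-search iteration bound $j_k \le \jsol+1$. By Lemma~\ref{lemma:d_from_cappedcg}, since $\dtype=\SOL$ we have $\d_k^T\H_k\d_k \ge -\epsilonh\|\d_k\|^2$ and $\|\hat\rr_k\| \le \tfrac12\epsilonh\zeta\|\d_k\|$ where $\hat\rr_k = (\H_k+2\epsilonh\I)\d_k + \g_k$. Using the descent-lemma consequence \eqref{eq:lipH2} at $\y=\x_k$, $\x=\x_k+\alpha\d_k$, I would bound $f(\x_k+\alpha\d_k) - f(\x_k)$ from above by $\alpha\nabla f(\x_k)^T\d_k + \tfrac{\alpha^2}{2}\d_k^T\nabla^2 f(\x_k)\d_k + \tfrac{L_H}{6}|\alpha|^3\|\d_k\|^3$, then replace the exact gradient/Hessian by $\g_k,\H_k$ at the cost of $\delta_{g,k}\|\d_k\|$ and $\deltah\|\d_k\|^2$ terms. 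Expressing $\g_k^T\d_k$ and $\d_k^T\H_k\d_k$ via $\hat\rr_k$ (so that $\g_k^T\d_k = \hat\rr_k^T\d_k - \d_k^T\H_k\d_k - 2\epsilonh\|\d_k\|^2$), and invoking Condition~\ref{cond:opt_epsilong_epsilonh} to absorb the inexactness, the upper bound collapses to something like $-c\,\alpha^2\|\d_k\|^2 + \tfrac{L_H}{6}\alpha^3\|\d_k\|^3$ for an explicit constant $c$ proportional to $(1-\zeta)\epsilonh$; note the sufficient-decrease test \eqref{eq:suffdecr} further subtracts $\tfrac{\eta}{6}\alpha^3\|\d_k\|^3$. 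Since $\|\d_k\| > \epsilong/\epsilonh$ while also $\|\d_k\| \le 1.1\epsilonh^{-1}\|\g_k\| \le 1.1\epsilonh^{-1}U_g$ by \eqref{eqn:dk_sol_upper} and \eqref{eq:bounds}, the cubic term is controlled, and a standard argument shows \eqref{eq:suffdecr} holds as soon as $\alpha \le$ (the stated threshold), giving $j_k \le \jsol+1$.

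Second I would derive the function-decrease bound \eqref{eq:csol}. Once $\alpha_k = \theta^{j_k}$ is accepted, \eqref{eq:suffdecr} gives $f(\x_k)-f(\x_{k+1}) \ge \tfrac{\eta}{6}|\alpha_k|^3\|\d_k\|^3$, so it suffices to lower-bound $|\alpha_k|\,\|\d_k\|$. There are two regimes. If the first trial $\alpha_k=1$ is accepted, then $j_k=0$ and the decrease is $\ge\tfrac{\eta}{6}\|\d_k\|^3$; combined with $\|\d_k\|\ge(\|\g_{k+1}\|-\delta_{g,k}-\delta_{g,k+1})/(2.5\epsilonh)$ — which I would prove from $\hat\rr_k=(\H_k+2\epsilonh\I)\d_k+\g_k$, the triangle inequality, $\|\H_k\|\le U_H$, and Condition~\ref{cond:opt_epsilong_epsilonh} relating $\g_k$ to $\g_{k+1}$ and to $\nabla f$ via $\deltagt$ and $L_g$ — this yields the first branch of the $\min$, while the crude bound $\|\d_k\|>\epsilong/\epsilonh$ together with $j_k\le\jsol+1$ gives the $(2.5\epsilonh)^3$ and $\epsilong^{3/2}$ branches after plugging in $\alpha_k\ge\theta^{\jsol+1}$ and simplifying. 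The constant $\csol$ emerges as the minimum of the contributions from the two regimes: $\tfrac{\eta}{6}(1+2L_H)^{-3/2}$ from the first-trial/step-quality estimate $\|\d_k\|\ge\|\g_k\|/(1+2L_H+\epsilonh)\cdot(\ldots)$, and $\tfrac{\eta}{6}[3\theta^2(1-\zeta)/(4(L_H+\eta))]^{3/2}$ from the worst-case accepted step-size.

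The main obstacle I anticipate is the careful bookkeeping of the inexactness: one must verify that every place where \citet{royer2018newton} uses $\nabla f(\x_k)$ or $\nabla^2 f(\x_k)$ can tolerate a perturbation of size $\delta_{g,k}$ or $\deltah$ exactly as budgeted by Condition~\ref{cond:opt_epsilong_epsilonh}, in particular that the $(1-\zeta)/8$ and $(1-\zeta)/4$ factors are tight enough to preserve the sign/magnitude of the quadratic term $-c\alpha^2\|\d_k\|^2$ and hence the descent. A secondary subtlety is the appearance of $\|\g_{k+1}\|$ (rather than $\|\g_k\|$) in \eqref{eq:csol}: this forces us to relate the step to the gradient at the \emph{next} iterate, which is why Condition~\ref{cond:opt_epsilong_epsilonh} includes the $\|\g_{k+1}\|$ term, and one must chase the Lipschitz bound $\|\nabla f(\x_{k+1})-\nabla f(\x_k)\|\le L_g\|\alpha_k\d_k\|$ through the algebra, using $\|\d_k\|\le 1.1 U_g/\epsilonh$ to keep this error lower-order. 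The $\max\{0,\cdot\}$ in \eqref{eq:csol} simply accounts for the degenerate case $\|\g_{k+1}\|\le\delta_{g,k}+\delta_{g,k+1}$, where the bound is vacuous. Everything else is routine estimation following the template of the exact analysis.
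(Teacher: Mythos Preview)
Your overall two-case structure (unit step accepted versus backtracking) matches the paper, and your treatment of the backtracking case is essentially correct. However, there is a real gap in your handling of the unit-step case that would prevent you from reaching the stated constants.

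You propose to relate $\|\g_{k+1}\|$ to $\|\d_k\|$ via the Lipschitz-\emph{gradient} bound $\|\nabla f(\x_{k+1})-\nabla f(\x_k)\|\le L_g\|\d_k\|$ together with $\|\H_k\|\le U_H$. That route only yields
\[
\|\g_{k+1}\| \;\lesssim\; (U_H+L_g)\|\d_k\| + \delta_{g,k}+\delta_{g,k+1},
\]
so the denominator in your lower bound on $\|\d_k\|$ is $O(1)$, not $2.5\epsilonh$; the resulting decrease bound is far too weak to produce the first branch $(\|\g_{k+1}\|-\delta_{g,k}-\delta_{g,k+1})^3/(2.5\epsilonh)^3$ or the factor $(1+2L_H)^{-3/2}$ in $\csol$. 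The correct tool is the Lipschitz-\emph{Hessian} remainder \eqref{eq:lipH}: when $\alpha_k=1$, write
\[
\g_{k+1} = \bigl(\g_{k+1}-\nabla f_{k+1}\bigr)+\bigl(\nabla f_{k+1}-\nabla f_k-\nabla^2 f(\x_k)\d_k\bigr)+\bigl(\nabla^2 f(\x_k)-\H_k\bigr)\d_k - 2\epsilonh\d_k + \hat\rr_k + \bigl(\nabla f_k-\g_k\bigr),
\]
so that $\|\g_{k+1}\|\le \delta_{g,k}+\delta_{g,k+1}+\tfrac{L_H}{2}\|\d_k\|^2+2.5\epsilonh\|\d_k\|$. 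This is a \emph{quadratic} inequality in $\|\d_k\|$; solving it (via \citet[Lemma~17]{royer2018complexity}) gives
\[
\|\d_k\|\;\ge\;\frac{1}{\sqrt{1+2L_H}}\min\!\left\{\frac{\|\g_{k+1}\|-\delta_{g,k}-\delta_{g,k+1}}{2.5\epsilonh},\;2.5\epsilonh\right\},
\]
which is where both the $(1+2L_H)^{-3/2}$ prefactor and the $(2.5\epsilonh)^3$ branch of the $\min$ in \eqref{eq:csol} originate. Your stated bound $\|\d_k\|\ge(\|\g_{k+1}\|-\delta_{g,k}-\delta_{g,k+1})/(2.5\epsilonh)$, without the $\min$ or the $\sqrt{1+2L_H}$ factor, is simply false in general. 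A minor related slip: in the backtracking case the leading negative term after \eqref{eq:ic1} is $-\tfrac{\alpha}{2}(1-\zeta)\epsilonh\|\d_k\|^2$, linear in $\alpha$, not the $-c\alpha^2\|\d_k\|^2$ you wrote; this is what makes the rearranged inequality quadratic in $\theta^j$ and produces the $\theta^{2j}$ bound you then invert.
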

\begin{proof} 
When the $\dtype=\SOL$, $\d_k$ is the solution of the inexact regularized Newton equations. 
We first prove that when $\d_k^T\g_k < 0$, the inner product $\d_k^T\nabla f(\x_k)$ is also negative:
\begin{equation*}
\begin{aligned}
    \d_k^T \nabla f(\x_k) &\leq \d_k^T\g_k + \delta_{g,k}\|\d_k\| \\
    & = \d_k^T \hat \rr_k - \d_k^T(\H_k+2\epsilonh\I)\d_k  + \delta_{g,k} \|\d_k\| && (\mbox{from \cref{eq:r_k}})\\
    & \leq \|\d_k\| \|\hat r_k\| - \epsilonh \|\d_k\|^2 + \delta_{g,k}\|\d_k\| &&(\mbox{from \cref{eqn:pos}})\\
    & \leq \frac12 \epsilonh \zeta \| \d_k \|^2  - \epsilonh \|\d_k\|^2 + \delta_{g,k}\|\d_k\| && (\mbox{from \cref{eqn:dk_sol_rk}}) \\
    &\leq -\frac{1}{2}\epsilonh \|\d_k\|^2 + \frac{1-\zeta}{8}\max\left(\epsilong, \epsilonh\|\d_k\|\right) \|\d_k\| && (\mbox{from $\zeta \in (0,1)$ and Condition~\ref{cond:opt_epsilong_epsilonh}}) \\
    & = -\frac{1}{2}\epsilonh \|\d_k\|^2 + \frac{1-\zeta}{8}\epsilonh\|\d_k\|^2 && (\mbox{from $\norm{\d_k} > {\epsilong}/\epsilonh$})\\
    &< -\frac{3}{8}\epsilonh \|\d_k\|^2.
\end{aligned}
\end{equation*}

We consider two cases here.

\textbf{Case 1:} Consider first the case in which the value $\alpha_k=1$ is accepted by the backtracking line search procedure. 
We first note that in the case $\| \g_{k+1} \| - \delta_{g,k} - \delta_{g,k+1} \le 0$, the claim \eqref{eq:csol} is satisfied trivially, because $f(\x_{k+1})<f(\x_k)$ and the right-hand side of \eqref{eq:csol} is $0$.
Thus we assume in the rest of the argument for this case that $\| \g_{k+1} \| - \delta_{g,k} - \delta_{g,k+1} > 0$. We have 
\begin{align*}
& \|\g_{k+1}\| = \|\g_{k+1}-\g_k + \g_k\| \\
& = \|\g_{k+1}-\nabla f_{k+1} + \nabla f_{k+1} - \g_k - \nabla f_k + \nabla f_{k} -\nabla^2 f(\x_k) \d_k - 2\epsilonh \d_k + \nabla^2 f(\x_k) \d_k - \H_k\d_k + \hat\rr_k \|\\
&\leq \delta_{g,k} + \delta_{g,k+1} + \|\nabla f_{k+1}-\nabla f_k -\nabla^2 f(\x_k) \d_k\|+\| 2\epsilonh \d_k\|+\|  \nabla^2 f(\x_k) \d_k - \H_k\d_k\|+\|\hat\rr_k \|\\
&\leq \delta_{g,k} + \delta_{g,k+1} + \frac{L_H}2 \|\d_k\|^2 + 2\epsilonh\|\d_k\| + \deltah\|\d_k\| + \frac12 \epsilonh\zeta\|\d_k\| \quad \quad  (\mbox{from \cref{eqn:dk_sol_rk}})\\
& =  \delta_{g,k} + \delta_{g,k+1} + \left(2\epsilonh + \deltah + \frac12 \epsilonh\zeta \right)\|\d_k\| + \frac{L_H}2 \|\d_k\|^2 \\
& \leq \delta_{g,k} + \delta_{g,k+1} + \left(2\epsilonh + \frac{1-\zeta}{2}\epsilonh  + \frac12 \epsilonh\zeta \right)\|\d_k\| + \frac{L_H}2 \|\d_k\|^2  \quad \quad (\mbox{from Condition~\ref{cond:opt_epsilong_epsilonh}}) \\
&= \delta_{g,k} + \delta_{g,k+1} + 2.5 \epsilonh \|\d_k\| + \frac{L_H}2 \|\d_k\|^2 .
\end{align*}

We thus have $A \|\d_k \|^2 + B\|\d_k \| - C \ge 0$, where $A=L_H/2$, $B=2.5 \epsilonh$, and $C = \| \g_{k+1}\| - \delta_{g,k} - \delta_{g,k+1} > 0$. Since for any $D \ge 0$ and $t \ge 0$ we have $-1 + \sqrt{1+Dt} \ge \left(-1 + \sqrt{1+D}\right) \min\left\{t,1\right\}$ (see~\citet[Lemma 17]{royer2018complexity}),  it follows that
\begin{align*}
\| \d_k \| \ge \frac{-B + \sqrt{B^2+4AC}}{2A} & = \left( \frac{-1+ \sqrt{1+4AC/B^2}}{2A} \right) B \ge \left( \frac{-1+\sqrt{1+4A}}{2A} \right) \min \left\{C/B,B\right\}\\
& = \left( \frac{2}{\sqrt{1+4A}+1} \right) \min \left\{C/B,B\right\} \ge \left( \frac{1}{\sqrt{1+4A}} \right) \min \left\{C/B,B\right\},
\end{align*}
where the last step follows from $A>0$.
By substituting for $A$, $B$, and $C$, we obtain
\[
    \|\d_k\| \geq \frac{1}{\sqrt{1+2L_H}} \min\left\{\frac{\|\g_{k+1}\|-\delta_{g,k} - \delta_{g,k+1}}{2.5 \epsilonh}, 2.5 \epsilonh\right\}.
\]
Since $\alpha_k=1$ was accepted by the backtracking line search, we have
\begin{align*}
     f(\x_k) - f(\x_k+\d_k) &\geq \frac{\eta}6 \|\d_k \|^3 \\
    & \ge \frac{\eta}6 \frac{1}{(1+2L_H)^{3/2}} \min\left\{\frac{(\|\g_{k+1}\|-\delta_{g,k} - \delta_{g,k+1})^3}{(2.5 \epsilonh)^3}, (2.5 \epsilonh)^3 \right\}.
\end{align*}
By combining this inequality with the trivial inequality obtained when $\|\g_{k+1}\|-\delta_{g,k} - \delta_{g,k+1} \le 0$, we obtain \cref{eq:csol} for the case of $\alpha_k=1$.

\textbf{Case 2:} 
As a preliminary step, note that for any $\alpha \in [0,1]$, we have the following:
\begin{align}
\nonumber
& \alpha\g_k^T\d_k + \tfrac12{\alpha^2} \d_k^T \H_k\d_k  \\
\nonumber
&= \alpha \left[ \hat\rr_k - (\H_k + 2 \epsilonh I) \d_k \right]^T \d_k 
+ \tfrac12{\alpha^2} \d_k^T \H_k\d_k   \quad \quad  (\mbox{from \cref{eq:r_k}})  \\
\nonumber
& \le \alpha \| \hat\rr_k \| \| \d_k \| 
- \alpha \left( 1- \tfrac12 \alpha \right)  \d_k^T (\H_k+2 \epsilonh I) \d_k 
- \alpha^2\epsilonh \| \d_k \|^2\\
\nonumber
& \le \alpha \| \hat\rr_k \| \| \d_k \| 
- \alpha \left( 1- \tfrac12 \alpha \right)  \d_k^T (\H_k+2 \epsilonh I) \d_k && \\
\nonumber
& \le \tfrac12 \alpha \epsilonh \zeta \|\d_k \|^2 - \tfrac12 \alpha \epsilonh \| \d_k \|^2 \quad \quad (\mbox{from $1-\tfrac12 \alpha \ge \tfrac12$, \cref{eqn:pos}, and \cref{eqn:dk_sol_rk}})\\
\label{eq:ic1}
&= \tfrac12{\alpha} \epsilonh (\zeta-1) \|\d_k \|^2.
\end{align}

\reftwo{Now consider the case where $ \alpha_k = 1 $ is not accepted by the line search. 
In this case, suppose $j \ge 0$ is the largest integer such that the step acceptance condition is not satisfied.} 
For this $j$, we have the following:
\begin{alignat*}{2}
& -\frac{\eta}6 \theta^{3j} \|\d_k\|^3 \\
&\leq f(\x_k + \theta^j\d_k) - f(\x_k) \\
&\leq \theta^{j}\nabla f_k^T\d_k + \frac{\theta^{2j}}{2} \d_k^T\nabla^2 f(\x_k) \d_k + \frac{L_H}6 \theta^{3j} \|\d_k\|^3 \;\; && (\mbox{from \cref{eq:lipH2}})\\
& \le \theta^{j}\g_k^T\d_k + \frac{\theta^{2j}}{2} \d_k^T \H_k\d_k + \theta^j\delta_{g,k}\|\d_k\| + \frac{\theta^{2j}}{2}\deltah\|\d_k\|^2 + \frac{L_H}6 \theta^{3j}\|\d_k\|^3  \;\; && (\mbox{from Definition~\ref{cond:appr_gh}})\\
& \leq -\frac{\theta^j}2 (1-\zeta)\epsilonh\|\d_k\|^2 + \theta^j\delta_{g,k}\|\d_k\| + \frac{\theta^{2j}}{2}\deltah\|\d_k\|^2 + \frac{L_H}6 \theta^{3j}\|\d_k\|^3  \; && (\mbox{from \cref{eq:ic1}})\\
& \leq -\frac{\theta^j}2 \|\d_k\|^2 \big((1-\zeta)\epsilonh-\deltah\big)  + \theta^j\delta_{g,k}\|\d_k\| + \frac{L_H}6 \theta^{3j}\|\d_k\|^3\; && (\mbox{from $ 0< \theta < 1 $}). 
\end{alignat*}
By rearranging this expression, we obtain
\[
    \theta^{2j} \geq \left( \frac{3}{L_H+\eta} \right) \left( \frac{\big((1-\zeta)\epsilonh-\deltah\big) \|\d_k\| - 2 \delta_{g,k}}{\|\d_k\|^2} \right).
\]
From  Condition~\ref{cond:opt_epsilong_epsilonh}, we have $\delta_H \le {(1-\zeta)} \epsilon_H/2$, so this bound implies that
\begin{equation} \label{eq:dj1}
    \theta^{2j} \geq \left( \frac{3}{L_H+\eta} \right) \frac{(1-\zeta)\epsilonh \|\d_k\| - 4\delta_{g,k}}{2 \|\d_k\|^2}.
\end{equation}
Since by assumption  $\norm{\d_k} \ge {\epsilong}/{\epsilonh}$,  we have from Condition~\ref{cond:opt_epsilong_epsilonh} that either
\begin{equation}\label{eq:diff2}
\delta_{g,k} \le \frac{1-\zeta}{8} \epsilong = \frac{1-\zeta}{8} \epsilonh {\frac{\epsilong}{\epsilonh}}\le \frac{(1-\zeta)\epsilonh\norm{\d_k}}{8}, 
\end{equation}
or else
\begin{equation}\label{eq:diff2_2}
\delta_{g,k} \le \frac{1-\zeta}8 \min(\epsilonh\|\d_k\|, \|\g_k\|, \|\g_{k+1}\|) < \frac{(1-\zeta)\epsilonh\norm{\d_k}}{8}.
\end{equation}
In either case, we have that $(1-\zeta) \epsilonh \|\d_k \| - 4\delta_{g,k} \ge (1-\zeta) \epsilonh \|\d_k \|/2$, so we have from \eqref{eq:dj1} that 
\begin{equation} \label{eq:dj2}
    \theta^{2j} \geq \left( \frac{3}{L_H+\eta} \right)\left( \frac{(1 - \zeta)\epsilon_H}{4\norm{\d_k}}\right).
\end{equation}
Since in the case under consideration, the acceptance condition for the backtracking line search fails for $j=0$, the latter expression holds with $j=0$, and we have
\begin{align}\label{eq:dk_lowerbound2}
    \norm{\d_k} \ge \frac{3(1-\zeta)\epsilonh}{4(L_H + \eta)}.
\end{align}
From \eqref{eq:dj2}, \eqref{eqn:dk_sol_upper}, and \eqref{eq:bounds}, we know that
\begin{equation} \label{eq:dj3}
    \theta^{2j} \ge \frac{3(1-\zeta)\epsilonh}{4(L_H + \eta)} \norm{\d_k}^{-1} \ge \frac{3(1-\zeta)\epsilonh}{4(L_H + \eta)} \frac{\epsilonh}{1.1 U_g}.
\end{equation}
Since
\[
\jsol = \left\lceil \frac{1}{2} \log_\theta \frac{3(1 - \zeta)\epsilonh^2}{4.4 U_g(L_H + \eta)} \right\rceil,
\]
then for any $j > \jsol$, we have 
\[
\theta^{2j} < \theta^{2\jsol} \le \frac{3(1 - \zeta)\epsilonh^2}{4.4 U_g(L_H + \eta)}.
\]
By comparing this expression with \eqref{eq:dj3}, we conclude that the line-search acceptance condition cannot be rejected for $j >\jsol$, so the step taken is $\alpha_k = \theta^{j_k}$ for some $j_k \le \jsol+1$. From \eqref{eq:dj3}, the preceding index $j=j_k-1$ satisfies
\[
\theta^{2j_k -2} \ge \frac{3(1 - \zeta)\epsilonh}{4(L_H + \eta)} \norm{\d_k}^{-1},
\]
so that 
\[
\theta^{j_k} \ge \sqrt{\frac{3\theta^2(1 - \zeta)}{4(L_H + \eta)}} \epsilonh^{1/2}\norm{\d_k}^{-1/2}.
\]
Then, we have
\begin{align}
\nonumber
f(\x_k) - f(\x_k + \theta^{j_k}\d_k) & \ge \frac{\eta}{6} \theta^{3j_k} \norm{\d_k}^3 \\
\nonumber
& \ge \frac{\eta}{6} \left[ \frac{3\theta^2(1 - \zeta)}{4(L_H + \eta)}\right]^{3/2} \epsilonh^{3/2} \norm{\d_k}^{3/2}  \\
& \ge \frac{\eta}{6} \left[ \frac{3\theta^2(1 - \zeta)}{4(L_H + \eta)}\right]^{3/2} \epsilong^{3/2},
\label{eq:sol_lower_bound_improvement}
\end{align}
where the last inequality follows from $\|\d_k \| \ge \epsilong/\epsilonh$.
% \cref{eq:dk_lowerbound2}. 

We obtain the result by combining the two cases above.
\end{proof}

Next, we deal with the negative curvature directions, for which  $\dtype=\NC$ \refboth{and for which a backtracking  birectional line search is used}. 
Lemmas~\ref{lemma:nc_from_cappedcg} and \ref{lemma:nc_both_procedure} bound the amount of decrease obtained from the negative curvature directions obtained  in Procedures~\ref{alg:capped_cg} and~\ref{alg:minimum_eigenvalue}, respectively. 

\begin{lemma}
\label{lemma:nc_from_cappedcg}
Suppose that Assumption~\ref{ass:lip} is satisfied and that Condition~\ref{cond:opt_epsilong_epsilonh} holds for all $k$.
Suppose that at iteration $k$ of Algorithm~\ref{alg:inexact_withlinesearch}, we have $\|\g_k\| \ge \epsilong$, so that Procedure~\ref{alg:capped_cg} is called. 
When Procedure~\ref{alg:capped_cg} outputs a direction $\d_k$ with $\dtype=\NC$ that is subsequently used as a search direction, the backtracking \refboth{birectional line search terminates with \eqref{eq:suffdecr} satisfied by either  $\alpha_k =\theta^{j_k}$ or $\alpha_k =-\theta^{j_k}$, with $j_k\leq \jnc+1$,} where
\[
    \jnc = \left\lceil \log_\theta \frac{3}{2(L_H+\eta)} \right\rceil.
\]
The resulting step $\x_{k+1}=\x_k+\alpha_k\d_k$ satisfies
\[
    f(\x_k) -f(\x_{k+1}) \geq \cnc \epsilonh^3,
\]
where
\[
    \cnc =  \frac{\eta}{6} \min \left\{ \left[\frac{3\theta}{2(L_H+\eta)}\right]^3,1\right\}.
\]
\end{lemma}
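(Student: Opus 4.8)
The plan is to combine the structural description of the negative curvature direction from Lemma~\ref{lemma:d_from_cappedcg} (case~2) with the cubic estimate \eqref{eq:lipH2}, absorbing the Hessian inexactness into the slack in Condition~\ref{cond:opt_epsilong_epsilonh} and using the \emph{bidirectional} nature of the line search to discard the gradient term altogether. First, recall that when $\dtype=\NC$ we have, from \eqref{eqn:dk_nc}, $\d_k^T\H_k\d_k=-\|\d_k\|^3$ and $\|\d_k\|\ge\epsilonh$. Combining Condition~\ref{cond:appr_gh} with the bound $\deltah\le\tfrac{1-\zeta}{4}\epsilonh\le\tfrac14\|\d_k\|$ from Condition~\ref{cond:opt_epsilong_epsilonh}, I would first transfer the negative curvature to the \emph{true} Hessian:
\[
\d_k^T\nabla^2 f(\x_k)\d_k \le \d_k^T\H_k\d_k + \deltah\|\d_k\|^2 \le -\|\d_k\|^3 + \tfrac14\|\d_k\|^3 \le -\tfrac12\|\d_k\|^3 .
\]

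Next I would apply \eqref{eq:lipH2} at $\x_k$ with step $\alpha\d_k$ for $\alpha\in\{\theta^j,-\theta^j\}$. Since we only control $\g_k$ and not $\nabla f(\x_k)$, the sign of $\d_k^T\nabla f(\x_k)$ is unknown; pick the sign $s\in\{+1,-1\}$ with $s\,\d_k^T\nabla f(\x_k)\le 0$, so that for $\alpha=s\theta^j$ the linear term in \eqref{eq:lipH2} is nonpositive and
\[
f(\x_k+s\theta^j\d_k) \le f(\x_k) - \tfrac14\theta^{2j}\|\d_k\|^3 + \tfrac{L_H}{6}\theta^{3j}\|\d_k\|^3 .
\]
Comparing with the acceptance test \eqref{eq:suffdecr}, the step $s\theta^j\d_k$ satisfies \eqref{eq:suffdecr} as soon as $\theta^j<\tfrac{3}{2(L_H+\eta)}$, i.e.\ as soon as $j\ge\jnc$ (up to the usual $+1$ from the ceiling). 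Because the backtracking sequence $1,-1,\theta,-\theta,\dots$ tries both signs at every scale, it must terminate with some $j_k\le\jnc+1$.

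Finally, for the decrease estimate I would use the standard backtracking argument applied to the ``good'' sign $s$: if $j_k\ge 1$, then scale $j_k-1$ was rejected, which (by the display above) forces $\theta^{j_k-1}\ge\tfrac{3}{2(L_H+\eta)}$ and hence $\theta^{j_k}\ge\tfrac{3\theta}{2(L_H+\eta)}$; together with the trivial case $j_k=0$ this gives $\theta^{3j_k}\ge\min\{1,[\tfrac{3\theta}{2(L_H+\eta)}]^3\}$. Substituting into \eqref{eq:suffdecr} and using $\|\d_k\|\ge\epsilonh$ yields
\[
f(\x_k)-f(\x_{k+1}) \ge \tfrac{\eta}{6}\theta^{3j_k}\|\d_k\|^3 \ge \tfrac{\eta}{6}\min\left\{1,\left[\tfrac{3\theta}{2(L_H+\eta)}\right]^3\right\}\epsilonh^3 = \cnc\,\epsilonh^3 .
\]

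The main point requiring care, relative to the exact Newton-CG analysis, is the bidirectional line search: one must check that interleaving the two sign choices costs at most a constant number of extra backtracks, and that the gradient inexactness $\deltagt$ — which may flip the sign of $\d_k^T\nabla f(\x_k)$ relative to $\d_k^T\g_k$ — is harmless here precisely because the negative quadratic term in \eqref{eq:lipH2} dominates the (discarded) linear term once $|\alpha|$ is small. The only other thing to watch is that the $\tfrac{1-\zeta}{4}$ slack allotted to $\deltah$ is exactly what keeps $\d_k^T\nabla^2 f(\x_k)\d_k\le-\tfrac12\|\d_k\|^3$, which is in turn what produces the constant $\tfrac{3}{2(L_H+\eta)}$ appearing in $\jnc$ and $\cnc$.
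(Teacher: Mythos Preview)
Your proposal is correct and follows essentially the same route as the paper's proof: both transfer the negative curvature from $\H_k$ to $\nabla^2 f(\x_k)$ via the $\deltah$ bound in Condition~\ref{cond:opt_epsilong_epsilonh}, use the bidirectional nature of the line search to discard the linear term (the paper splits on the sign of $\nabla f(\x_k)^T\d_k$ and shows that whichever sign is ``good'' yields the same inequality you obtain), and then deduce $\theta^{j_k-1}\ge \tfrac{3}{2(L_H+\eta)}$ from rejection at scale $j_k-1$. The only cosmetic difference is that the paper keeps the term $\deltah\|\d_k\|^2$ explicit through inequality \eqref{eq:fp2} and substitutes $\deltah<\epsilonh/4\le\|\d_k\|/4$ at the end, whereas you absorb it immediately into the bound $\d_k^T\nabla^2 f(\x_k)\d_k\le -\tfrac12\|\d_k\|^3$; both routes land on the same constant $\tfrac{3}{2(L_H+\eta)}$.
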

\begin{proof} 
Note first that by \cref{eqn:dk_nc}, we have $\|\d_k\| = |\d^T\H_k\d| \ge \epsilonh$.
Thus, if \refboth{$\alpha_k=\pm 1$, we have by \eqref{eq:suffdecr} that} $f(\x_k) -f(\x_{k+1}) \geq \frac{\eta}{6} \| \d_k \|^3 \ge \frac{\eta}{6}  \epsilonh^3$, so the result holds in this case.

When \refboth{$|\alpha_k|<1$}, using \cref{eqn:dk_nc} again, we have 
\[
    \d_k^T\H_k\d_k = -\|\d_k\|^3 \leq -\epsilonh\|\d_k\|^2.
\]
We have from Definition~\ref{cond:appr_gh} that 
\[
    |\d_k^T(\H_k-\nabla^2 f(\x_k))\d_k | \leq \deltah \|\d_k\|^2,
\]
so by combining the last two expressions, we have
\begin{equation} \label{eq:rd9}
  \d_k^T\nabla^2 f(\x_k)\d_k  \leq -\|\d_k\|^3 + \deltah \|\d_k\|^2.
\end{equation}
\refboth{Let $j \ge 0$ be an integer such that neither $\theta^j$ nor $-\theta^j$ satisfies the criterion \eqref{eq:suffdecr}.
Supposing first that $\nabla f(\x_k)^T\d_k\leq 0$, we have 
from \eqref{eq:lipH2}  and \eqref{eq:rd9} that}
\begin{align}
\nonumber
-\frac{\eta}6 \theta^{3j} \|\d_k\|^3
&\leq f(\x_k + \theta^j\d_k) - \f(\x_k) \\
\nonumber
&\leq \theta^{j}\nabla f(\x_k)^T\d_k + \frac{\theta^{2j}}{2} \d_k^T\nabla^2 f(\x_k) \d_k + \frac{L_H}6 \theta^{3j} \|\d_k\|^3\\
\label{eq:sj88}
&\leq -\frac{\theta^{2j}}{2}\|\d_k\|^3 + \frac{\theta^{2j}}{2}\deltah\|\d_k\|^2  + \frac{L_H}6 \theta^{3j} \|\d_k\|^3.
\end{align}
\refboth{Supposing instead that  $\nabla f(\x_k)^T\d_k> 0$, we have by considering the step $-\theta^j$ that
\begin{align*}
-\frac{\eta}6 \theta^{3j} \|\d_k\|^3
&\leq f(\x_k - \theta^j\d_k) - \f(\x_k) \\
&\leq - \theta^{j}\nabla f(\x_k)^T\d_k + \frac{\theta^{2j}}{2} \d_k^T\nabla^2 f(\x_k) \d_k + \frac{L_H}6 \theta^{3j} \|\d_k\|^3\\
&\leq -\frac{\theta^{2j}}{2}\|\d_k\|^3 + \frac{\theta^{2j}}{2}\deltah\|\d_k\|^2  + \frac{L_H}6 \theta^{3j} \|\d_k\|^3,
\end{align*}
yielding the same inequality as \eqref{eq:sj88}.
After rearrangement of this inequality} and using $\|\d_k \| \ge \epsilonh$, it follows that
\begin{equation} \label{eq:fp2}
    \theta^j \geq \left(\frac{6}{L_H+\eta}\right)\left( \frac{\|\d_k\| - \deltah}{2\|\d_k\|}\right) = \frac{3}{L_H+\eta} - \frac{3\deltah}{(L_H+\eta)\|\d_k\|} \geq \frac{3}{L_H+\eta} - \frac{3\delta_H}{(L_H+\eta)\epsilon_H}.
\end{equation}
Since from Condition~\ref{cond:opt_epsilong_epsilonh}, we have $\delta_H \le (1-\zeta)\epsilonh/4 < \epsilonh/4$, then 
\begin{equation} \label{eq:rd8}
    \theta^j \ge \frac{3}{2(L_H + \eta)}.
\end{equation}
Meanwhile, we have for $j > \jnc$ that
\[
\theta^j < \theta^{\jnc} \le \frac{3}{2(L_H + \eta)}.
\]
\refboth{The last two inequalities together imply that $j \le \jnc$,} so the line search must terminate with \refboth{$\alpha_k = \pm \theta^{j_k}$} for some $j_k \le \jnc+1$. Since \eqref{eq:rd8} must hold for $j=j_k-1$, we have
\[
    \theta^{j_k-1} \geq  \frac{3}{2(L_H+\eta)} \implies \refboth{|\alpha_k|} = \theta^{j_k} \ge \frac{3\theta}{2(L_H+\eta)}.
\]
Thus, from the step acceptance condition \eqref{eq:suffdecr} together with \cref{eqn:dk_nc} and the definition of $\cnc$, we have
\[
f(\x_k) -f(\x_{k+1}) \geq \frac{\eta}{6} \refboth{|\alpha_k|^3} \| \d_k \|^3 \ge \cnc \epsilonh^3,
\]
so the required claim also holds in the case of \refboth{$|\alpha_k|<1$}, completing the~proof.
\end{proof}

We now turn our attention to the property of Procedure~\ref{alg:minimum_eigenvalue}. 
The following lemma shows that when a negative curvature direction is obtained from Procedure~\ref{alg:minimum_eigenvalue}, we can guarantee descent in the function in a similar fashion to Lemma~\ref{lemma:nc_from_cappedcg}.

\begin{lemma}\label{lemma:nc_both_procedure}
Suppose that Assumption~\ref{ass:lip} is satisfied and that Condition~\ref{cond:opt_epsilong_epsilonh} holds for all $k$.
Suppose that at iteration $k$ of Algorithm~\ref{alg:inexact_withlinesearch}, the search direction $\d_k$ is a negative curvature direction for $\H_k$, obtained from Procedure~\ref{alg:minimum_eigenvalue}.
Then the \refboth{backtracking bidirectional} line search terminates with step size \refboth{either $\alpha_k = \theta^{j_k}$ or $\alpha_k = -\theta^{j_k}$}  with $j_k \le \jnc + 1$ where $\jnc$ is defined as in Lemma~\ref{lemma:nc_from_cappedcg}.
Moreover, the decrease in function value resulting from the chosen step size satisfies 
\begin{equation}
f(\x_k) - f(\x_k + \alpha_k \d_k) \ge \frac{\cnc}{8} \epsilon_H^3, 
\end{equation}
where $\cnc$ is defined in Lemma~\ref{lemma:nc_from_cappedcg}.
\end{lemma}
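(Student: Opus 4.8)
The plan is to mimic the proof of Lemma~\ref{lemma:nc_from_cappedcg} almost line for line, the only substantive change being that a negative curvature direction returned by the Minimum Eigenvalue Oracle (Procedure~\ref{alg:minimum_eigenvalue}) carries the slightly weaker bound $\v^T\H_k\v \le -\epsilonh/2$ rather than $-\epsilonh$; this factor of $\tfrac12$ is exactly what produces the factor $1/8$ in the final estimate. First I would record the relevant facts about $\d_k$. Since the oracle returns a unit vector $\v$ with $\v^T\H_k\v \le -\epsilonh/2$ and Algorithm~\ref{alg:inexact_withlinesearch} sets $\d_k = -\sgn(\v^T\g_k)\,|\v^T\H_k\v|\,\v$ (in either Line~15 or Line~30, so the argument is agnostic to whether $\|\g_k\|\ge\epsilong$ or not), we have $\|\d_k\| = |\v^T\H_k\v| \ge \epsilonh/2$ and, because $\v^T\H_k\v = -\|\d_k\|$, the exact identity $\d_k^T\H_k\d_k = -\|\d_k\|^3$ --- i.e.\ the analogue of \eqref{eqn:dk_nc} but with the curvature lower bound $-\epsilonh/2$. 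Combining with Condition~\ref{cond:appr_gh}, which gives $|\d_k^T(\H_k - \nabla^2 f(\x_k))\d_k| \le \deltah\|\d_k\|^2$, yields $\d_k^T\nabla^2 f(\x_k)\d_k \le -\|\d_k\|^3 + \deltah\|\d_k\|^2$, the exact analogue of \eqref{eq:rd9}.

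Next I would run the bidirectional backtracking line search analysis, whose structure is insensitive to the $\epsilonh/2$ weakening. If $\alpha_k = \pm1$ is accepted, then \eqref{eq:suffdecr} together with $\|\d_k\| \ge \epsilonh/2$ gives $f(\x_k) - f(\x_{k+1}) \ge \tfrac{\eta}{6}(\epsilonh/2)^3 = \tfrac{\eta}{48}\epsilonh^3 \ge \tfrac{\cnc}{8}\epsilonh^3$, since $\cnc \le \eta/6$. Otherwise, for any integer $j \ge 0$ such that neither $\theta^j$ nor $-\theta^j$ satisfies \eqref{eq:suffdecr}, splitting on the sign of $\nabla f(\x_k)^T\d_k$ and applying \eqref{eq:lipH2} reproduces inequality \eqref{eq:sj88} verbatim; rearranging and using $\|\d_k\| \ge \epsilonh/2$ gives $\theta^j \ge \tfrac{3}{L_H+\eta} - \tfrac{6\deltah}{(L_H+\eta)\epsilonh}$, and since Condition~\ref{cond:opt_epsilong_epsilonh} forces $\deltah \le (1-\zeta)\epsilonh/4 < \epsilonh/4$, this reduces once more to $\theta^j \ge \tfrac{3}{2(L_H+\eta)}$, i.e.\ exactly \eqref{eq:rd8}. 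Comparing with $\theta^j < \tfrac{3}{2(L_H+\eta)}$ for $j > \jnc$ shows every failing index satisfies $j \le \jnc$, so the search terminates with $|\alpha_k| = \theta^{j_k}$ for some $j_k \le \jnc+1$, and applying \eqref{eq:rd8} to $j = j_k - 1$ gives $|\alpha_k| \ge \tfrac{3\theta}{2(L_H+\eta)}$.

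Finally I would chain the pieces: the acceptance condition \eqref{eq:suffdecr}, $\|\d_k\| \ge \epsilonh/2$, and the lower bound on $|\alpha_k|$ give
\[
f(\x_k) - f(\x_k + \alpha_k\d_k) \ge \frac{\eta}{6}|\alpha_k|^3\|\d_k\|^3 \ge \frac{\eta}{6}\left[\frac{3\theta}{2(L_H+\eta)}\right]^3\left(\frac{\epsilonh}{2}\right)^3 = \frac{1}{8}\,\cnc\,\epsilonh^3,
\]
as claimed. There is no genuinely new idea here; the main thing to watch --- and what I would flag as the only real obstacle --- is the bookkeeping of the factor $\tfrac12$: it enters the residual test as $6\deltah/((L_H+\eta)\epsilonh)$ rather than $3\deltah/((L_H+\eta)\epsilonh)$, which is still absorbed because Condition~\ref{cond:opt_epsilong_epsilonh} supplies $\deltah < \epsilonh/4$ with room to spare, and one must also argue that the bidirectional search terminates within the same $\jnc+1$ steps even though the sign of $\d_k^T\nabla f(\x_k)$ is unknown given only the approximate gradient $\g_k$.
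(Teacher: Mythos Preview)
Your proposal is correct and follows essentially the same approach as the paper's own proof: both observe that the oracle direction satisfies $\d_k^T\H_k\d_k = -\|\d_k\|^3$ with the weaker bound $\|\d_k\|\ge\epsilonh/2$, then replay the argument of Lemma~\ref{lemma:nc_from_cappedcg} up to the modified version of \eqref{eq:fp2}, namely $\theta^j \ge \tfrac{3}{L_H+\eta} - \tfrac{6\deltah}{(L_H+\eta)\epsilonh}$, which still collapses to \eqref{eq:rd8} because $\deltah<\epsilonh/4$. The paper's proof is terser (it simply says ``proceeds as in the proof of Lemma~\ref{lemma:nc_from_cappedcg}''), but the content is identical to what you wrote out.
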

\begin{proof}
Note that 
\[
\d_k^T\H\d_k \leq - \|\d_k\|^3 \leq - \frac{\epsilonh}{2}\|\d_k\|^2,
\]
so that $\|\d_k\|\geq \epsilonh/2$.  In the first part of the proof, for the case \refboth{$\alpha_k= \pm 1$}, we have
\[
f(\x_k) - f(\x_{k+1}) \ge \frac{\eta}{6} \| \d_k \|^3 \ge \frac{\eta}{6} \frac{1}{8} \epsilon_H^3 \ge \frac{\cnc}{8} \epsilon_H^3,
\]
so the result holds in this case.
The analysis of the case \refboth{$|\alpha_k|<1$} proceeds as in the proof of Lemma~\ref{lemma:nc_from_cappedcg}  until the lower bound on $\theta^j$ in \eqref{eq:fp2}, where because of $\| \d_k \| \ge \epsilon_H/2$, we have
\[
\theta^j \ge \frac{3}{L_H+\eta} - \frac{6\delta_H}{(L_H+\eta)\epsilon_H},
\]
which, because of $\delta_H \le \epsilon_H/4$, still yields the lower bound \eqref{eq:rd8}, allowing the result of the proof to proceed as in the earlier result, except for the factor of $1/8$.
\end{proof}

Now comes a crucial step. When the output direction $\d_k$ from Procedure~\ref{alg:capped_cg} satisfies $\|\d_k\|\leq \epsilong/\epsilonh$ and Procedure~\ref{alg:minimum_eigenvalue} detects no  significant negative curvature in the Hessian,
the update of $\x_k$ with unit step along $\d_k$ is the final step of Algorithm~\ref{alg:inexact_withlinesearch}.
Dealing with this case is critical to obtaining the convergence rate of our inexact damped Newton-CG algorithm. 
\begin{lemma}
\label{lemma:terminate_step_condition_withlinesearch}
Suppose that Assumption~\ref{ass:lip} is satisfied and that Condition~\ref{cond:opt_epsilong_epsilonh} holds for all $k$.
Suppose that \cref{alg:inexact_withlinesearch} terminates at iteration $k$ at line~\ref{line:term1}, and returns $\x_k+\d_k$, where $\d_k$ is obtained from Procedure~\ref{alg:capped_cg} and satisfies $\norm{\d_k} \le {\epsilong}/{\epsilonh}$. Then we have
\[
\norm{\nabla f(\x_k + \d_k)} \le \frac{L_H}2\dfrac{\epsilong^2}{\epsilonh^2} + 4 \epsilong.
\]
If in addition the property $\H_k \succeq -\epsilonh I$ holds, then 
\[
\lambda_{\min}(\nabla^2 f(\x_k + \d_k)) \ge -\left(\frac54 \epsilon_H + L_H\dfrac{\epsilong}{\epsilonh} \right) I.
\]
\end{lemma}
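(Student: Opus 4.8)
The plan is to estimate $\nabla f(\x_k+\d_k)$ by a direct telescoping decomposition and then feed in the defining properties of the $\SOL$ step together with the standing hypothesis $\norm{\d_k}\le\epsilong/\epsilonh$. Since $\dtype=\SOL$, part~1 of Lemma~\ref{lemma:d_from_cappedcg} applies, so $\hat\rr_k=(\H_k+2\epsilonh\I)\d_k+\g_k$ with $\norm{\hat\rr_k}\le\tfrac12\epsilonh\zeta\norm{\d_k}$ by \eqref{eqn:dk_sol_rk}. First I would write the identity
\[
\nabla f(\x_k+\d_k)=\left[\nabla f(\x_k+\d_k)-\nabla f(\x_k)-\nabla^2 f(\x_k)\d_k\right]+\left[\nabla^2 f(\x_k)-\H_k\right]\d_k+\left[\H_k\d_k+\g_k\right]+\left[\nabla f(\x_k)-\g_k\right],
\]
and bound the four bracketed pieces respectively by $\tfrac{L_H}{2}\norm{\d_k}^2$ (using \eqref{eq:lipH}), $\deltah\norm{\d_k}$ (using Condition~\ref{cond:appr_gh}), $\norm{\hat\rr_k}+2\epsilonh\norm{\d_k}\le(\tfrac{\zeta}{2}+2)\epsilonh\norm{\d_k}$ (since $\H_k\d_k+\g_k=\hat\rr_k-2\epsilonh\d_k$), and $\delta_{g,k}$ (using Condition~\ref{cond:appr_gh} again).

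Next I would substitute $\norm{\d_k}\le\epsilong/\epsilonh$. This bounds the first piece by $\tfrac{L_H}{2}\epsilong^2/\epsilonh^2$ and turns each occurrence of $\epsilonh\norm{\d_k}$ into at most $\epsilong$. The one point needing care is the $\delta_{g,k}$ term: because $\epsilonh\norm{\d_k}\le\epsilong$, the maximum in Condition~\ref{cond:opt_epsilong_epsilonh} equals $\epsilong$, so that condition collapses to $\delta_{g,k}\le\tfrac{1-\zeta}{8}\epsilong$; similarly $\deltah\le\tfrac{1-\zeta}{4}\epsilonh$ gives $\deltah\norm{\d_k}\le\tfrac{1-\zeta}{4}\epsilong$. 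Collecting the coefficients of $\epsilong$ then gives $\tfrac{1-\zeta}{4}+\tfrac{\zeta}{2}+2+\tfrac{1-\zeta}{8}=2+\tfrac{3+\zeta}{8}<4$ for $\zeta\in(0,1)$, which is the claimed gradient bound (indeed with room to spare).

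For the eigenvalue statement I would argue through Loewner ordering. Condition~\ref{cond:appr_gh} gives $\nabla^2 f(\x_k)\succeq\H_k-\deltah\I$, and combined with the added hypothesis $\H_k\succeq-\epsilonh\I$ this yields $\nabla^2 f(\x_k)\succeq-(\epsilonh+\deltah)\I$. Lipschitz continuity of the Hessian (Assumption~\ref{ass:lip}) gives $\nabla^2 f(\x_k+\d_k)\succeq\nabla^2 f(\x_k)-L_H\norm{\d_k}\I$, hence $\nabla^2 f(\x_k+\d_k)\succeq-(\epsilonh+\deltah+L_H\norm{\d_k})\I$. Substituting $\deltah\le\tfrac{1-\zeta}{4}\epsilonh<\tfrac14\epsilonh$ and $\norm{\d_k}\le\epsilong/\epsilonh$ then gives $\lambda_{\min}(\nabla^2 f(\x_k+\d_k))\ge-(\tfrac54\epsilonh+L_H\epsilong/\epsilonh)$, as required.

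The only real obstacle is the bookkeeping in the first part: one must be careful about which inexactness quantity ($\deltah$ vs.\ $\delta_{g,k}$) controls which term, and in particular notice that the threshold test $\norm{\d_k}\le\epsilong/\epsilonh$ is precisely what makes the adaptive gradient condition of Condition~\ref{cond:opt_epsilong_epsilonh} reduce to the simple bound $\delta_{g,k}\le\tfrac{1-\zeta}{8}\epsilong$. Everything else is a routine combination of the Taylor-type estimates \eqref{eq:lipH}--\eqref{eq:lipH2}, the accuracy conditions, and the $\SOL$-step properties of Lemma~\ref{lemma:d_from_cappedcg}.
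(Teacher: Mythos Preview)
Your proposal is correct and follows essentially the same approach as the paper: the same four-term telescoping decomposition of $\nabla f(\x_k+\d_k)$, the same use of \eqref{eq:lipH}, Condition~\ref{cond:appr_gh}, the residual bound \eqref{eqn:dk_sol_rk}, and Condition~\ref{cond:opt_epsilong_epsilonh}, and the same Loewner-ordering chain for the Hessian part. The only cosmetic difference is that you track the $\zeta$-dependence in the $\epsilong$-coefficient a bit longer (arriving at $2+\tfrac{3+\zeta}{8}$) whereas the paper coarsens earlier to $3+\tfrac{1-\zeta}{8}$; both are safely below $4$.
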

\begin{proof} Note that termination at line~\ref{line:term1} occurs only if $\dtype=\SOL$, so Part 1 of Lemma~\ref{lemma:d_from_cappedcg} holds. For the gradient norm at $\x_k+\d_k$, we have 
\begin{align*}
\norm{\nabla f(\x_k + \d_k)} 
&\le  \norm{\nabla f(\x_k+\d_k) - \nabla f(\x_k)  - \nabla^2 f(\x_k) \d_k + \H_k \d_k+ \g_k}
\\ & \quad\quad +\norm{\nabla f(\x_k) - \g_k}  + \norm{\nabla^2 f(\x_k)\d_k - \H_k\d_k}
\\
& \le \norm{\nabla f(\x_k+\d_k) - \nabla f(\x_k)  - \nabla^2 f(\x_k) \d_k } + \norm{\H_k\d_k + \g_k} + \delta_{g,k} + \delta_H\norm{\d_k} 
\\
& \le \norm{\nabla f(\x_k+\d_k) - \nabla f(\x_k)  - \nabla^2 f(\x_k) \d_k } + \norm{\hat\rr_k} + 2\epsilonh \norm{\d_k} + \delta_{g,k} + \delta_H\norm{\d_k} 
\\
& \le \frac{L_H}2\norm{\d_k}^2 + \frac{1}{2}\epsilonh \zeta\norm{\d_k} + (2\epsilonh + \delta_H)\norm{\d_k} + \delta_{g,k} \quad  \quad  \mbox{(from \eqref{eq:lipH} and \eqref{eqn:dk_sol_rk})}
\\
& \le \frac{L_H}2\norm{\d_k}^2 + 3\epsilonh \norm{\d_k} + \delta_{g,k} 
\quad \quad  \quad  \quad    \mbox{(since $\zeta \in (0,1)$ and $\deltah \le \epsilonh/2$)}
\\ 
& \le \frac{L_H}2\norm{\d_k}^2 + 3\epsilonh \norm{\d_k} + \left( \frac{1-\zeta}{8} \right) \max\left(\epsilong, \min(\epsilonh\|\d_k\|, \|\g_k\|, \|\g_{k+1}\|)\right) 
\\
& \le \frac{L_H}2\dfrac{\epsilong^2}{\epsilonh^2} + 3 \epsilonh \norm{\d_k}  + \frac{1-\zeta}{8}\max\left(\epsilong, \epsilonh\|\d_k\|\right) 
\\
& \le \frac{L_H}2\dfrac{\epsilong^2}{\epsilonh^2} + 3\epsilong + \frac{1-\zeta}{8} \epsilong \quad \quad \quad  \quad   
\mbox{(since $\| \d_k \| \le \epsilong/\epsilonh$)}
\\
& \le \frac{L_H}2\dfrac{\epsilong^2}{\epsilonh^2} + 4 \epsilong,
\end{align*}
as required.

For the second-order condition, since $\H_k \succeq -\epsilon_H I$ and $\deltah \le \epsilonh/4$ (from Condition~\ref{cond:opt_epsilong_epsilonh}), we have
\[
\nabla^2 f(\x_k + \d_k) \succeq \nabla^2 f(\x_k) - L_H\norm{\d_k} I
 \succeq \H_k - \delta_H I -  L_H\dfrac{\epsilong}{\epsilonh} I  \succeq -\left(\frac54 \epsilon_H + L_H\dfrac{\epsilong}{\epsilonh} \right) I.
\]
This completes the proof.
\end{proof}

Now, combining Lemmas \ref{lemma:dtype_sol_opt}--\ref{lemma:terminate_step_condition_withlinesearch}, we obtain the iteration complexity for \cref{alg:inexact_withlinesearch}.

\begin{theorem}
\label{thm:opt_iteration_comp}
Suppose that Assumption~\ref{ass:lip} is satisfied and that Condition~\ref{cond:opt_epsilong_epsilonh} holds for all $k$.
For a given $\epsilon>0$, let $\epsilonh = \sqrt{L_H \epsilon}, \epsilong = \epsilon$. 
Define 
\begin{equation}\label{eq:k}
\bar K := 
\left\lceil \frac{3(f(\x_0)- f_\text{low})}{\min  \left(  \frac{1}{64 L_H^{3/2}} \csol, 8 L_H^{3/2} \csol,  L_H^{3/2} \cnc/8  \right)} \epsilon^{-3/2} \right \rceil + 5,
\end{equation}
where $\csol$ and $\cnc$ are defined in Lemmas \ref{lemma:dtype_sol_opt} and \ref{lemma:nc_from_cappedcg}, respectively. 
Then \cref{alg:inexact_withlinesearch} terminates in at most $\bar{K}$ iterations at a point satisfying 
\[
\norm{\nabla f(\x)} \lesssim \epsilon.
\]
Moreover, with probability at least $(1 - \delta)^{\bar K }$ the point returned by \cref{alg:inexact_withlinesearch} also satisfies the approximate second-order condition
\begin{equation} \label{eqn:2oeps}
\lambda_{\min}(\nabla^2 f(\x)) \gtrsim -\sqrt{L_H\epsilon}.
\end{equation}
\reftwo{Here, $\lesssim$ and $\gtrsim$ denote that the corresponding inequality holds up to a certain constant that is independent of $ \epsilon $ and $ L_{H} $.}
\end{theorem}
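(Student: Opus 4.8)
The plan is to turn the per-iteration function-decrease estimates of Lemmas~\ref{lemma:dtype_sol_opt}, \ref{lemma:nc_from_cappedcg} and \ref{lemma:nc_both_procedure} into an iteration count by telescoping, and then to read off the quality of the terminal point from Lemma~\ref{lemma:terminate_step_condition_withlinesearch} and from the ``small gradient'' exit of the algorithm. Throughout I would substitute $\epsilong=\epsilon$ and $\epsilonh=\sqrt{L_H\epsilon}$, so that $\epsilonh^3=L_H^{3/2}\epsilon^{3/2}$, $\epsilong^{3/2}=\epsilon^{3/2}$, $\epsilong^2/\epsilonh^2=\epsilon/L_H$, and $\epsilong/\epsilonh=\sqrt{\epsilon/L_H}$ (so that $L_H\,\epsilong/\epsilonh=\epsilonh$).

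First I would classify each non-terminal iterate $\x_k$ of Algorithm~\ref{alg:inexact_withlinesearch} by the branch it executes: \textbf{(i)} $\|\g_k\|\ge\epsilong$ with Procedure~\ref{alg:capped_cg} returning $\dtype=\NC$; \textbf{(ii)} $\|\g_k\|\ge\epsilong$, $\dtype=\SOL$ and $\|\d_k\|>\epsilong/\epsilonh$; \textbf{(iii)} $\|\g_k\|\ge\epsilong$, $\dtype=\SOL$ and $\|\d_k\|\le\epsilong/\epsilonh$, so Procedure~\ref{alg:minimum_eigenvalue} is called (and, the iterate being non-terminal, returns a curvature direction); \textbf{(iv)} $\|\g_k\|<\epsilong$, so Procedure~\ref{alg:minimum_eigenvalue} is called and returns a curvature direction. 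In cases (i), (iii), (iv), Lemma~\ref{lemma:nc_from_cappedcg} (for (i)) and Lemma~\ref{lemma:nc_both_procedure} (for (iii), (iv)) give $f(\x_k)-f(\x_{k+1})\ge\tfrac18 L_H^{3/2}\cnc\,\epsilon^{3/2}$. The only delicate case is (ii), where Lemma~\ref{lemma:dtype_sol_opt} only lower-bounds the decrease by $\csol\max\{0,\min((\|\g_{k+1}\|-\delta_{g,k}-\delta_{g,k+1})^3/(2.5\epsilonh)^3,(2.5\epsilonh)^3,\epsilong^{3/2})\}$, which may vanish. I would split (ii) according to whether $\|\g_{k+1}\|\ge\epsilong$. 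If so, the adaptive Condition~\ref{cond:opt_epsilong_epsilonh} (bounding $\delta_{g,k}$ and $\delta_{g,k+1}$ using $\min(\cdot)\le\|\g_{k+1}\|$ and $\epsilong\le\|\g_{k+1}\|$) yields $\delta_{g,k}+\delta_{g,k+1}\le\tfrac{1-\zeta}{4}\|\g_{k+1}\|$, hence $\|\g_{k+1}\|-\delta_{g,k}-\delta_{g,k+1}\ge\tfrac34\epsilong$; substituting $\epsilong,\epsilonh$ and using $(2.5)^3>8$ and $(3/4)^3/(2.5)^3>1/64$ gives $f(\x_k)-f(\x_{k+1})\ge\min\{\tfrac{1}{64L_H^{3/2}}\csol,\,8L_H^{3/2}\csol\}\,\epsilon^{3/2}$. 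If instead $\|\g_{k+1}\|<\epsilong$, iteration $k$ may be ``wasted'', but then iteration $k+1$ (having $\|\g_{k+1}\|<\epsilong$) is of type (iv), so it is either terminal or productive with decrease $\ge\tfrac18 L_H^{3/2}\cnc\,\epsilon^{3/2}$; moreover a wasted iteration cannot be immediately followed by another one, since its successor uses Procedure~\ref{alg:minimum_eigenvalue} rather than Procedure~\ref{alg:capped_cg}. Hence $k\mapsto k+1$ injects the wasted iterations into (the productive iterations) $\cup$ (the single terminal iteration).

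Telescoping $\sum_k\bigl(f(\x_k)-f(\x_{k+1})\bigr)\le f(\x_0)-\flow$ over the productive iterations bounds their number by $(f(\x_0)-\flow)/(c^{*}\epsilon^{3/2})$ with $c^{*}:=\min\{\tfrac{1}{64L_H^{3/2}}\csol,\,8L_H^{3/2}\csol,\,\tfrac18 L_H^{3/2}\cnc\}$, precisely the denominator in \eqref{eq:k}; the injection bounds the wasted iterations by that number plus one; and adding the single terminal iteration (and absorbing the factor $\le 2$ and the additive constants into the factor $3$ and the ``$+5$'' of \eqref{eq:k}) shows that Algorithm~\ref{alg:inexact_withlinesearch} terminates within $\bar K$ iterations. (Termination itself is guaranteed since infinitely many non-terminal iterations would force infinitely many productive ones, contradicting $f\ge\flow$.) The returned point is either $\x_k$ with $\|\g_k\|<\epsilong$, so $\|\nabla f(\x_k)\|\le\|\g_k\|+\delta_{g,k}\le(1+\tfrac{1-\zeta}{8})\epsilon$, or $\x_k+\d_k$ from line~\ref{line:term1}, for which Lemma~\ref{lemma:terminate_step_condition_withlinesearch} with $\epsilong^2/\epsilonh^2=\epsilon/L_H$ gives $\|\nabla f(\x_k+\d_k)\|\le\tfrac{L_H}{2}\cdot\tfrac{\epsilon}{L_H}+4\epsilon=\tfrac92\epsilon$; either way $\|\nabla f(\x)\|\lesssim\epsilon$, deterministically.

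Finally, Procedure~\ref{alg:minimum_eigenvalue} is invoked at most once per iteration, hence at most $\bar K$ times; let $\mathcal{E}$ be the event that none of these calls issues a certificate while $\lambda_{\min}(\H_k)<-\epsilonh$. By independence of the random starting vectors across the (at most $\bar K$) calls, $\Pr(\mathcal{E})\ge(1-\delta)^{\bar K}$. On $\mathcal{E}$, any termination via a certificate occurs at $\H_k\succeq-\epsilonh I$, so $\delta_H\le\tfrac{1-\zeta}{4}\epsilonh$ gives $\nabla^2 f(\x_k)\succeq\H_k-\delta_H I\succeq-\tfrac54\epsilonh I$ at a line~\ref{line:term2} exit, while Lemma~\ref{lemma:terminate_step_condition_withlinesearch} (second part), together with $L_H\,\epsilong/\epsilonh=\epsilonh$, gives $\lambda_{\min}(\nabla^2 f(\x_k+\d_k))\ge-(\tfrac54\epsilonh+\epsilonh)=-\tfrac94\sqrt{L_H\epsilon}$ at a line~\ref{line:term1} exit; in both cases $\lambda_{\min}(\nabla^2 f(\x))\gtrsim-\sqrt{L_H\epsilon}$ on $\mathcal{E}$. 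I expect the main obstacle to be the bookkeeping around the type-(ii) iterations: showing that the adaptive Condition~\ref{cond:opt_epsilong_epsilonh} indeed forces $\|\g_{k+1}\|-\delta_{g,k}-\delta_{g,k+1}\gtrsim\epsilong$ whenever $\|\g_{k+1}\|\ge\epsilong$, and that a degenerate SOL step is always absorbed by the immediately following small-gradient iteration; the remaining estimates are routine substitutions into the preceding lemmas.
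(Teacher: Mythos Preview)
Your proposal is correct and follows essentially the same route as the paper: the paper also partitions iterations into classes (five rather than your four, explicitly splitting your type~(ii) according to whether $\|\g_{k+1}\|<\epsilon$), uses Condition~\ref{cond:opt_epsilong_epsilonh} exactly as you do to obtain $\|\g_{k+1}\|-\delta_{g,k}-\delta_{g,k+1}\ge\tfrac34\epsilon$ when $\|\g_{k+1}\|\ge\epsilon$, pairs the ``wasted'' SOL steps with their small-gradient successors (your injection is the paper's inequality $|\mathcal K_2|\le|\mathcal K_1|$), and telescopes. For the second-order probability bound the paper carries out a somewhat more elaborate induction on the running count of calls to Procedure~\ref{alg:minimum_eigenvalue}, but your iterated-conditioning argument using the fresh random seed at each call is valid and yields the same $(1-\delta)^{\bar K}$ bound.
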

\begin{proof}
Note first that for our choices of $\epsilong$ and $\epsilonh$, the threshold $\epsilong/\epsilonh$ for $\| \d_k \|$ in line~\ref{line:threshold} of Algorithm~\ref{alg:inexact_withlinesearch} becomes $\sqrt{\epsilon/L_H}$.

We show first that \cref{alg:inexact_withlinesearch} terminates after at most $\bar K$ steps. We taxonomize the iterations into five classes. To specify these classes, we denote by $\d_k$ and $\dtype$ the values of these variables {\em immediately before a step is taken or termination is declared}, bearing in mind that these variables can be reassigned during iteration $k$, in Line \ref{line:newdk}.
Supposing for contradiction that \cref{alg:inexact_withlinesearch} runs for at least $K$ steps, for some $K>\bar{K}$, we define the five classes of indices as follows.
\begin{align*}
    \mathcal K_1 &:= \{k=0,1,2,\dotsc,  K-1 \, | \, \norm{\g_k} < \epsilon \} \\
    \mathcal K_2 &:= \{k=0,1,2,\dotsc,  K-1 \, | \, \norm{\g_k} \ge \epsilon, \, \dtype=\SOL, \, \|\d_k \| > \sqrt{\epsilon/L_H}, \, \norm{\g_{k+1}} < \epsilon \} \\
    \mathcal K_3 &:= \{k=0,1,2,\dotsc,  K-1 \, | \, \norm{\g_k} \ge \epsilon, \, \dtype=\SOL, \, \|\d_k \| > \sqrt{\epsilon/L_H}, \, \norm{\g_{k+1}} \ge \epsilon \} \\
    \mathcal K_4 &:= \{k=0,1,2,\dotsc,  K=1 \, | \, \norm{\g_k} \ge \epsilon, \, \dtype=\SOL, \, \| \d_k \| \le \sqrt{\epsilon/L_H}\} \\
\mathcal K_5 &:= \{k=0,1,2,\dotsc, K-1 \, | \, \norm{\g_k} \ge \epsilon, \, \dtype=\NC\}.
\end{align*}
Obviously, $K= \Abs{\mathcal K_1} + \Abs{\mathcal K_2} + \Abs{\mathcal K_3} + \Abs{\mathcal K_4} + \Abs{\mathcal K_5} $. We consider each of these types of steps in turn.

\paragraph{Case 1:} $k \in \mathcal K_1$.
The update $\d_k$ in this case must come from Procedure~\ref{alg:minimum_eigenvalue}. Either the method terminates (which happens at most once!) or from Lemma~\ref{lemma:nc_both_procedure}, we have that
\begin{equation}
    f(\x_k) - f(\x_{k+1}) \geq \frac18 \cnc \epsilonh^3 =\frac18 L_H^{3/2} \cnc \epsilon^{3/2}.
\end{equation}
Thus the total amount of decrease that results from steps in $\mathcal K_1$ is at least $(\Abs{\mathcal K_1} -1) L_H^{3/2} \cnc \epsilon^{3/2}/8$.

\paragraph{Case 2:} $k \in \mathcal K_2$.
With \cref{lemma:dtype_sol_opt}, we can guarantee only that $f(\x_k) - f(\x_{k+1}) \ge 0$. However, since $\norm{\g_{k+1}} < \epsilon$, the {\em next} iterate must belong to class $\mathcal K_1$. Therefore we have $\Abs{\mathcal K_2} \le \Abs{\mathcal K_1}$.

\paragraph{Case 3:} $k \in \mathcal K_3$.
Here the step $\d_k$ is an approximate solution of the damped Newton equations, and we can apply \cref{lemma:dtype_sol_opt} to obtain a nontrivial lower bound on the decrease in $f$.
By Condition \ref{cond:opt_epsilong_epsilonh}, we have
\begin{align*}
    \delta_{g,k} & \le \frac18 \max\left(\epsilong, \min(\epsilonh\|\d_k\|, \|\g_k\|, \|\g_{k+1}\|)\right) \le \frac18 \max( \epsilong, \| \g_{k+1} \|) = \frac18 \| \g_{k+1} \|, \\
    \delta_{g,k+1} & \le \frac18 \max\left(\epsilong, \min(\epsilonh\|\d_{k+1}\|, \|\g_{k+1}\|, \|\g_{k+2}\|)\right) \le \frac18 \max( \epsilong, \| \g_{k+1} \|) = \frac18 \| \g_{k+1} \|,
\end{align*}
so that 
\[
\|\g_{k+1} \| - \delta_{g,k} - \delta_{g,k+1} \ge \frac34 \| \g_{k+1} \| \ge \frac34 \epsilong = \frac34 \epsilon.
\]
Thus, from \eqref{eq:csol} in Lemma~\ref{lemma:dtype_sol_opt}, we have for this type of step that 
\begin{align*}
    f(\x_k) -f(\x_{k+1}) & \ge \csol \max\left\{0, \min \left( \frac{(\|\g_{k+1} \| - \delta_{g,k} - \delta_{g,k+1})^3}{(2.5 \epsilonh)^3}, (2.5 \epsilonh)^3, \epsilong^{3/2}\right)\right\} \\
    & \ge \csol \min \left( \frac{(\tfrac34 \epsilon)^3}{(2.5 \sqrt{L_H \epsilon})^3}, (2.5 \sqrt{L_H \epsilon})^3, \epsilon^{3/2}\right) \\
    & = \csol \min \left( \frac{1}{64L_H^{3/2}}, 8 L_H^{3/2}, 1  \right) \epsilon^{3/2} =
    \csol \min \left( \frac{1}{64L_H^{3/2}}, 8 L_H^{3/2} \right) \epsilon^{3/2}.
\end{align*}

\paragraph{Case 4:} $k \in \mathcal K_4$.
In this case, Procedure~\ref{alg:capped_cg} outputs $\dtype=\SOL$ along with a ``small'' value of $\d_k$. Subsequently, Procedure~\ref{alg:minimum_eigenvalue} was called, but it must have returned with a certification of near-positive-definiteness of $\H_k$, since $\dtype$ was not switched to $\NC$. 
Thus, according to \cref{lemma:terminate_step_condition_withlinesearch}, termination occurs with output $\x_k+\d_k$. Thus, this case can occur at most once, and we have $|\mathcal K_4| \le 1$.

\paragraph{Case 5:} $k \in \mathcal K_5$.
In this case, either the algorithm terminates and outputs $\x=\x_k$ (which happens at most once), or else a step is taken along a negative curvature direction for $\H_k$, detected either in Procedure \ref{alg:capped_cg} or Procedure \ref{alg:minimum_eigenvalue}. In the former case (detection in Procedure \ref{alg:capped_cg}), we have from 
\cref{lemma:nc_from_cappedcg} that
$f(\x_k) -f(\x_{k+1}) \ge \cnc \epsilonh^3 = \cnc L_H^{3/2} \epsilon^{3/2}$, while in
 the latter case (detection in Procedure \ref{alg:minimum_eigenvalue}), we have from \cref{lemma:nc_both_procedure} that
$f(\x_k) - f(\x_{k+1}) \geq  \frac18 L_H^{3/2} \cnc \epsilon^{3/2}$.
Thus, the total decrease in $f$ resulting from steps of this class is bounded below by
$(| \mathcal K_5|-1) \frac18 L_H^{3/2} \cnc \epsilon^{3/2}$.

The total decrease of $f$ over all $K$ steps cannot exceed $f(\x_0) - f_\text{low}$. We thus have
% \footnote{If $\Abs{\mathcal K_4} = 1$, we should consider the function decrease over the $K-1$ steps. The analysis on bounding $\Abs{\mathcal K_1},\Abs{\mathcal K_3}, \Abs{\mathcal K_5}$ remains the same.}
\begin{align*}
    f(\x_0) - f_\text{low} & \ge \sum_{k=0}^{K-1} (f(\x_k) - f(\x_{k+1})) \\
    & \ge \sum_{k\in \mathcal K_1}(f(\x_k) - f(\x_{k+1}))  + \sum_{k\in \mathcal K_3}(f(\x_k) - f(\x_{k+1})) + \sum_{k\in \mathcal K_5}(f(\x_k) - f(\x_{k+1})) \\
    & \ge (\Abs{\mathcal K_1} + \Abs{\mathcal K_5} -2 ) \frac18 L_H^{3/2} \cnc \epsilon^{3/2} + \Abs{\mathcal K_3} \csol \min  \left(  \frac{1}{64 L_H^{3/2}}, 8 L_H^{3/2}\right) \epsilon^{3/2}. 
\end{align*}
Therefore, we have
\begin{align*}
    \Abs{\mathcal K_1} + \Abs{\mathcal K_5} -2 & \le \frac{f(\x_0) - f_\text{low}}{L_H^{3/2} \cnc /8}\epsilon^{-3/2}, \\
    \Abs{\mathcal K_3} & \le \frac{f(\x_0)- f_\text{low}}{\csol \min  \left(  \frac{1}{64 L_H^{3/2}}, 8 L_H^{3/2}\right)} \epsilon^{-3/2}.
\end{align*}
Finally, we have
\begin{align*}
    K &= \Abs{\mathcal K_1} + \Abs{\mathcal K_2} + \Abs{\mathcal K_3} + \Abs{\mathcal K_4} + \Abs{\mathcal K_5} \\
    & \le 2\Abs{\mathcal K_1} + \Abs{\mathcal K_3} + 1 + \Abs{\mathcal K_5} \\
    & \le 2 (\Abs{\mathcal K_1} + \Abs{\mathcal K_5} -2) + \Abs{\mathcal K_3} + 5 \\
    & \le \frac{2(f(\x_0) - f_\text{low})}{L_H^{3/2} \cnc /8}\epsilon^{-3/2} 
    + \frac{f(\x_0)- f_\text{low}}{\csol \min  \left(  \frac{1}{64 L_H^{3/2}}, 8 L_H^{3/2} \right)} \epsilon^{-3/2} + 5 \\
    & \le \frac{3(f(\x_0)- f_\text{low})}{\min  \left(  \frac{1}{64 L_H^{3/2}} \csol, 8 L_H^{3/2} \csol,  L_H^{3/2} \cnc/8  \right)} \epsilon^{-3/2} + 5 \le \bar{K},
\end{align*}
which contradicts our assertion that $K> \bar{K}$. Thus \cref{alg:inexact_withlinesearch} terminates in at most $\bar{K}$ steps.

Note that if termination occurs at Line \ref{line:term2} of \cref{alg:inexact_withlinesearch}, the returned value of $\x = \x_{k}$ certainly has $\| \nabla f(\x) \| \lesssim \epsilong = \epsilon$. This is because when $\|\g_k \| \le \epsilong$, we have from Condition~\ref{cond:opt_epsilong_epsilonh} that $\delta_{g,k} \le {(1-\zeta)} \epsilong/8$, so that $\| \nabla f(\x) \| \le \|\g_k\| + \delta_{g,k} \lesssim \epsilong$.
Alternatively, if termination occurs at Line~\ref{line:term1}, for the returned value of $\x = \x_{k} + \d_{k}$ we have
\[
\norm{\nabla f(\x)} \le \frac{L_H}2\dfrac{\epsilong^2}{\epsilonh^2} + 4 \epsilong = \frac{L_H}{2} \frac{\epsilon^2}{L_H \epsilon} + 4 \epsilon = \frac92 \epsilon.
\]
Thus, the claim $\norm{\nabla f(\x)} \lesssim \epsilon$ at the termination point $\x$ holds.

We now verify the claims about probability of failure and the second-order conditions. Note that for both types of termination (at Lines~\ref{line:term1} and \ref{line:term2} of \cref{alg:inexact_withlinesearch}), Procedure~\ref{alg:minimum_eigenvalue} issues a certificate that $\lambda_{\min}(\H_k) \ge -\epsilonh$. 
Subject to this certificate being correct, we show now that our claim \eqref{eqn:2oeps} holds. When termination occurs at line~\ref{line:term1}, we have in this case from Lemma~\ref{lemma:terminate_step_condition_withlinesearch} that at the returned point $\x=\x_k+\d_k$, we have
\[
\lambda_{\min} (\nabla^2 f(\x)) \ge -\left( \frac54 \epsilonh + L_H \frac{\epsilong}{\epsilonh} \right) = -\frac94 \sqrt{L_H \epsilon},
\]
as required. For termination at Line~\ref{line:term2}, we have directly that $\lambda_{\min} (\nabla^2 f(\x)) \ge -\epsilonh  = -\sqrt{L_H \epsilon}$, again verifying the claim.

We now calculate a bound on the probability of incorrect termination, which can occur at either Line~\ref{line:term1} or Line \ref{line:term2} when Procedure~\ref{alg:minimum_eigenvalue} issues a certificate that $\lambda_{\min}(\H_k) \ge -\epsilonh$, whereas in fact $\lambda_{\min}(\H_k) < -\epsilonh$.
\reftwo{The proof is a simple adaptation from \citet[Theorem~2]{XieW21a} and \citet[Theorem~4.6]{Cur19a}, the adaptations for inexactness being fairly straightforward.  We include the argument  here for the sake of completeness.}
The possibility of such an event happening on any individual call to Procedure~\ref{alg:minimum_eigenvalue} is bounded above by $\delta$. For all iterates $k$, we denote by $\tilde{P}_k$ the probability that \cref{alg:inexact_withlinesearch} reaches iteration $k$ but $\lambda_{\min}(\H_k) < -\epsilonh$, and denote by $P_k$ the probability that \cref{alg:inexact_withlinesearch} reaches iteration $k$ but $\lambda_{\min}(\H_k) < -\epsilonh$, yet the algorithm terminates due to Procedure~\ref{alg:minimum_eigenvalue} issuing an incorrect certificate. Clearly, we have $P_k \le \delta \tilde{P}_k$ for all $k=0,1,\dotsc,\bar{K}$. Since it is trivially true for all $k$ that 
\[
\tilde{P}_k + \sum_{i=0}^{k-1} P_i \le 1,
\]
we have for all $k$ that 
\begin{equation} \label{eq:ys9}
P_k \le \delta \tilde{P}_k  \le \delta \left(  1- \sum_{i=0}^{k-1} P_i \right).
\end{equation}
Now let $M_k$ be the total number of calls to Procedure~\ref{alg:minimum_eigenvalue} that have occurred up to and including iteration $k$ of \cref{alg:inexact_withlinesearch}. We prove by induction that $\sum_{i=0}^k P_i \le 1-(1-\delta)^{M_k}$ for all $k$. For $k=0$, the claim holds trivially, both in the case of $M_0=0$ (in which case $P_0=0$) and $M_0=1$ (in which case $P_0 \le \delta$). Supposing now that the claim is true for some $k \ge 0$, we show that it continues to hold for $k+1$. If \cref{alg:inexact_withlinesearch} reaches iteration $k+1$ with $\lambda_{\min}(\H_{k+1}) < -\epsilonh$, and Procedure~\ref{alg:minimum_eigenvalue} is {\em not} called at this iteration, then $M_{k+1}=M_k$ and $P_{k+1}=0$, so by the induction hypothesis we have
\[
\sum_{i=0}^{k+1} P_i  = \sum_{i=0}^k P_i  \le 1-(1-\delta)^{M_k} = 1-(1-\delta)^{M_{k+1}},
\]
as required. In the other case in which \cref{alg:inexact_withlinesearch} reaches iteration $k+1$ with $\lambda_{\min}(\H_{k+1}) < -\epsilonh$, and Procedure~\ref{alg:minimum_eigenvalue} {\em is} called at this iteration, then $M_{k+1}=M_k+1$, so by using \eqref{eq:ys9} and the inductive hypothesis, we have
\begin{align*}
   \sum_{i=0}^{k+1} P_i  & =  \sum_{i=0}^k P_i + P_{k+1} \\
   & \le  \sum_{i=0}^k P_i + \delta \left( 1- \sum_{i=0}^k P_i \right) \\
   & = \delta + (1-\delta) \sum_{i=0}^k P_i \\
   & \le \delta + (1-\delta) \left( 1- (1-\delta)^{M_k} \right) \\
   & = 1- (1-\delta)^{M_k+1} = 1- (1-\delta)^{M_{k+1}},
\end{align*}
as required. Since $M_k \le k \le \bar{K}$ for all $k=1,2,\dotsc,\bar{K}$, we have that the probability that \cref{alg:inexact_withlinesearch} terminates incorrectly on {\em any} iteration is bounded above by $1-(1-\delta)^{\bar{K}}$. So when termination occurs, the condition \eqref{eqn:2oeps} holds at the termination point with probability at least $(1-\delta)^{\bar{K}}$, as claimed.
\end{proof}

By incorporating the complexity of Procedures~\ref{alg:capped_cg} and \ref{alg:minimum_eigenvalue}, as described in Lemmas~\ref{lemma:capped_cg_iter_bounded} and \ref{lemma:fail_min_eigen_oracle}, we can obtain an upper bound on the number of approximate gradient and approximate Hessian-vector product evaluations required during a run of \cref{alg:inexact_withlinesearch}. The iteration count for the algorithm is bounded by $\OM(\epsilon^{-3/2})$ in \cref{thm:opt_iteration_comp} and each iteration requires one approximate gradient evaluation. Additionally, each iteration of \cref{alg:inexact_withlinesearch} may require a call to Procedure~\ref{alg:capped_cg}, which by \cref{lemma:capped_cg_iter_bounded} requires $\tilde\OM(\epsilonh^{-1/2}) = \tilde\OM(\epsilon^{-1/4})$ approximate Hessian-vector products. A call to Procedure~\ref{alg:minimum_eigenvalue} may also be required on some iterations. Here, by \cref{lemma:fail_min_eigen_oracle}, $\OM(\epsilonh^{-1/2}) = \OM(\epsilon^{-1/4})$ approximate Hessian-vector products may be required also. We summarize these observations in the following corollary.
\begin{corollary}\label{cor:total_complexity}
Suppose that the assumptions of \cref{thm:opt_iteration_comp} hold. Let $\epsilong=\epsilon, \epsilonh=\sqrt{L_H\epsilon}$, and $\bar K$ be defined as in \cref{eq:k}. Then for $d$ sufficiently large relative to $\epsilon^{-1/2}$, \cref{alg:inexact_withlinesearch} terminates after at most $\tilde\OM(\epsilon^{-7/4})$  matrix-vector products with the approximate Hessians and at most $\OM(\epsilon^{-3/2})$ evaluations of approximate gradients.
With probability at least $(1-\delta)^{\bar{K}}$, it returns a point that satisfies the approximate first- and second-order conditions described in \cref{thm:opt_iteration_comp}.
\end{corollary}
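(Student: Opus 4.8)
The plan is to layer the per-procedure work counts of \cref{lemma:capped_cg_iter_bounded,lemma:fail_min_eigen_oracle} on top of the iteration-complexity bound of \cref{thm:opt_iteration_comp}, and then substitute the prescribed scaling $\epsilong = \epsilon$, $\epsilonh = \sqrt{L_H\epsilon}$. First I would invoke \cref{thm:opt_iteration_comp}: under \cref{ass:lip} and \cref{cond:opt_epsilong_epsilonh}, \cref{alg:inexact_withlinesearch} stops after at most $\bar K = \OM(\epsilon^{-3/2})$ outer iterations, and the returned point meets the stated approximate first- and second-order conditions with probability at least $(1-\delta)^{\bar K}$ over the random draws inside Procedure~\ref{alg:minimum_eigenvalue}. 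The probability statement of the corollary is then literally this conclusion, so that part needs nothing further.

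Next I would account the work per outer iteration. Reading \cref{alg:inexact_withlinesearch}, the only gradient used at iteration $k$ is the approximate gradient $\g_k$ (the quantity $\g_{k+1}$ appearing in \cref{cond:opt_epsilong_epsilonh} is the approximate gradient computed at the start of iteration $k+1$, hence charged there), so each iteration costs a single approximate gradient evaluation, for a total of $\OM(\epsilon^{-3/2})$. For Hessian-vector products: when $\|\g_k\| \ge \epsilong$, iteration $k$ calls Procedure~\ref{alg:capped_cg} with scale $M = U_H$ (a valid bound on $\|\H_k\|$ by \eqref{eq:bounds}) and parameter $\epsilon = \epsilonh$, which by \cref{lemma:capped_cg_iter_bounded} performs at most $2\min\{d, J(U_H,\epsilonh,\zeta)\}+1$ matrix-vector products with $\H_k$, where $J(U_H,\epsilonh,\zeta) \le \min\{d, \tilde\OM(\epsilonh^{-1/2})\}$; it may additionally call Procedure~\ref{alg:minimum_eigenvalue} (which is also the only procedure invoked when $\|\g_k\| < \epsilong$), and by \cref{lemma:fail_min_eigen_oracle} that uses at most $\min\{d, 1+\lceil \tfrac12\ln(2.75 d/\delta^2)\sqrt{U_H/\epsilonh}\,\rceil\}$ matrix-vector products with $\H_k$. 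Both quantities are $\tilde\OM(\epsilonh^{-1/2})$ once $d$ is large enough that the caps at $d$ become inactive --- this is exactly the role of the ``$d$ sufficiently large relative to $\epsilon^{-1/2}$'' hypothesis --- so every outer iteration contributes at most $\tilde\OM(\epsilonh^{-1/2})$ approximate Hessian-vector products.

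Finally I would multiply: the total number of approximate Hessian-vector products is at most $\bar K \cdot \tilde\OM(\epsilonh^{-1/2}) = \OM(\epsilon^{-3/2})\cdot\tilde\OM(\epsilonh^{-1/2})$, and substituting $\epsilonh = \sqrt{L_H\epsilon}$ gives $\epsilonh^{-1/2} = (L_H\epsilon)^{-1/4} = \tilde\OM(\epsilon^{-1/4})$, hence $\tilde\OM(\epsilon^{-3/2-1/4}) = \tilde\OM(\epsilon^{-7/4})$ in total; combined with the $\OM(\epsilon^{-3/2})$ gradient count and the probability bound from \cref{thm:opt_iteration_comp}, this yields the claim.

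I do not expect a genuine obstacle here: the argument is bookkeeping on results already in hand. The only points that need a little care are verifying that precisely one fresh approximate gradient is spent per outer iteration (so no spurious factor enters when summing over $\bar K$), and pinning down the ``$d$ sufficiently large'' hypothesis precisely enough to discard the $\min\{d,\cdot\}$ terms in the two procedure bounds in favour of their $\epsilonh^{-1/2}$ terms; both are routine.
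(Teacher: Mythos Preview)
Your proposal is correct and follows essentially the same approach as the paper: invoke \cref{thm:opt_iteration_comp} for the $\OM(\epsilon^{-3/2})$ iteration bound and the probability statement, charge one approximate gradient per outer iteration, bound the per-iteration Hessian-vector products via \cref{lemma:capped_cg_iter_bounded} and \cref{lemma:fail_min_eigen_oracle} as $\tilde\OM(\epsilonh^{-1/2}) = \tilde\OM(\epsilon^{-1/4})$, and multiply. The paper's own justification (the paragraph immediately preceding the corollary) is exactly this bookkeeping, and your additional remarks on the role of the ``$d$ sufficiently large'' hypothesis and on why only one fresh $\g_k$ is charged per iteration simply make explicit what the paper leaves implicit.
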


\subsection{Inexact Newton-CG algorithm without line search}
\label{sec:fixed_step_size}

Although \cref{alg:inexact_withlinesearch} employs approximate gradients and Hessian \refone{at various steps, the use of backtracking line search to compute the stepsize $\alpha_k$ requires exact evaluations of the function $f$ and its gradient}. This setting has indeed been considered in some previous work, e.g., \cite{yao2018inexact,roosta2019sub}. 
When \refboth{gradient evaluation has similar computational cost to the corresponding function evaluation, we may not save much in computation by requiring only an approximate gradient.}
We show in this section that a pre-defined (``fixed'') value of the step length $\alpha_k$ can be carefully chosen to obviate the need for function evaluations.
\refone{The advantage of not requiring exact evaluations of functions is considerable, but there are disadvantages too.}
First, \refone{the computed fixed step-size is conservative, so} the guaranteed descent in the objective generally will be smaller than in \cref{alg:inexact_withlinesearch}; see \cref{lemma:dtype_sol_opt,lemma:nc_from_cappedcg,lemma:nc_both_procedure}. 
Second, our approach makes use of an approximate upper bound $L_H$ on the Lipschitz constant of the Hessian, which might not be readily available. 
Fortunately, there are many important instances (especially in machine learning) where an estimate of
$L_H$ can be obtained easily; for example, empirical risk minimization problems involving the squared loss \citep{xuNonconvexEmpirical2017} and Welsch's exponential variant \citep{zhang2019robustness}. See \cref{tab:L_H_bound} for details.

\begin{table}[!htb]
	\centering
	\caption{The upper bound of $L_H$ for some non-convex finite-sum minimization problems of the form \eqref{eq:finte_sum_problem}. Here, we consider $\{(\a_i, b_i)\}_{i=1}^n$ as training data where $\a_i\in\bbR^d$ and $b_i\in\bbR$. 
		% When we calculate the upper bound of $L_H$ for single data point, and for simplicity,  we omit the sub-script of $\a_i$ and $b_i$. 
		For Welsch's exponential function $\phi$, $\alpha$ is a positive parameter.
		% In the notation column, we use $z$ to represent the output of the inner product between $\a_i$ and $\x$, i.e., $z=\langle\a_i, \x\rangle$.
		\label{tab:L_H_bound}}
	\centering
	\begin{adjustbox}{width=1\linewidth}
		\begin{tabular}{lcccc}
			\toprule
			Problem Formulation & Predictor Function & \multicolumn{1}{m{6cm}}{\centering Upper bound of $L_H$ for single data point $(\a,b)$} & \multicolumn{1}{m{5cm}}{\centering Upper bound of $L_H$ for entire problem} \\
			\midrule
			$\sum\limits_{i=1}^n(b_i-\phi(\langle \a_i, \x\rangle))^2$ & $\phi(z) =1/{(1+e^{-z})}$
			& \multicolumn{1}{m{6cm}}{\centering$2\|\a\|^3(|b\phi'''(z)| + 3|\phi'(z)\phi''(z)|+|\phi(z)\phi'''(z)|)\leq 2(|b|+4)\|\a\|^3$} & $\displaystyle \max_{i=1,\ldots,n} \; 2(|b_{i}|+4)\|\a_i\|^3$ \\ \\
			$\sum\limits_{i=1}^n(b_i-\phi(\langle \a_i, \x\rangle))^2$ & $\phi(z) ={(e^{z}-e^{-z})}/{(e^{z}+e^{-z})}$
			& \multicolumn{1}{m{6cm}}{\centering$2\|\a\|^3(|b\phi'''(z)| + 3|\phi'(z)\phi''(z)|+|\phi(z)\phi'''(z)|)\leq 2(|b|+4)\|\a\|^3$} & $\displaystyle \max_{i=1,\ldots,n} \;  2(|b_{i}|+4)\|\a_i\|^3$ \\\\ 
			$\sum\limits_{i=1}^n\phi(b_i - \langle \a_i, \x\rangle)$ & $\phi(z) ={(1 -e^{-\alpha z^2})}/{\alpha}$ & $\|\a\|^3|\phi'''(z)|$ & $9\alpha^{3/2} \displaystyle \max_{i=1,\ldots,n} \;  \|\a_i\|^3$\\ 
			\bottomrule
		\end{tabular}
	\end{adjustbox}
\end{table}

% Although the gradient and Hessian in Algorithm~\ref{alg:inexact_withlinesearch} are already approximated,  the backtracking line-search step is still costly when $n \gg 1$. 
% Here, we present Algorithm~\ref{alg:fixed_stepsize}, as a modification to~\cref{alg:inexact_withlinesearch}, which uses a fixed (pre-defined) step size rather than line search, and as a consequence, function evaluations are no longer needed. 
% Clearly, eliminating exact functional evaluations would increase the efficiency of the overall algorithm. 

We state our variant of the Inexact Newton-CG Algorithm that does not require line search as \cref{alg:fixed_stepsize}.
Lines \ref{line:dif_01}, \ref{line:dif_02}, \ref{line:dif_03}, and \ref{line:dif_04}-\ref{line:dif_05} constitute the main differences between \cref{alg:inexact_withlinesearch,alg:fixed_stepsize}. 
\begin{algorithm}[!ht]
\caption{Inexact Newton-CG without Line Search}
\footnotesize
\label{alg:fixed_stepsize}
\begin{algorithmic}[1]
\STATE {\bf Inputs:} $\epsilon_g, \epsilon_H >0$; Parameter $\theta \in(0,1)$; Starting point $\x_0$; upper bound  on Hessian norm $U_H>0$; accuracy parameter $\zeta \in (0, \min\{1,U_H\})$;
\FOR {$k=0,1,2,\cdots$}
\IF {$\norm{\g_k}  \ge \epsilon_g$} 
\STATE Call Procedure~\ref{alg:capped_cg} with $\H = \H_k, M=U_H, \epsilon = \epsilon_H, \g = \g_k$ and accuracy parameter $\zeta$ to obtain $\d$ and $\dtype$;
\IF {$\dtype == NC$}
\STATE $\d_k \gets -\sgn(\d^T\g_k)\frac{|\d^T\H_k\d|}{\norm{\d}^2} \frac{\d}{\norm{\d}}$; \label{line:dif_01}
\ELSE
\STATE $\d_k \gets \d$;
\IF {$\norm{\d_k} \le \epsilong/\epsilonh$} 
\STATE Call Procedure~\ref{alg:minimum_eigenvalue} with $\H = \H_k, M= U_H, \epsilon= \epsilon_H$ to obtain $\v$ with $\|\v\|=1$ and $\v^T \H_k \v \le -\epsilon_H/2$ or a certificate that $\lambda_{\min}(\H_k) \ge -\epsilonh$;
\IF {Procedure \ref{alg:minimum_eigenvalue} certifies that $\lambda_{\min}(\H_k) \ge -\epsilonh$}
\STATE Terminate and return $\x_k + \d_k$;
\ELSE
\STATE $\d_k \gets -\sgn(\v^T\g_k) {|\v^T\H_k\v|} \v$ and $\dtype \gets \NC$; \label{line:dif_02}
\ENDIF
\ENDIF
\ENDIF
\ELSE
\STATE $\dtype \gets \NC$;
\STATE Call Procedure \ref{alg:minimum_eigenvalue} with $\H = \H_k, M= U_H, \epsilon= \epsilon_H$ to obtain $\v$ with $\|\v\|=1$ and $\v^T \H_k \v \le -\epsilon_H/2$ or a certificate that $\lambda_{\min}(\H_k) \ge -\epsilonh$;
\IF {Procedure \ref{alg:minimum_eigenvalue} certifies that $\lambda_{\min}(\H_k) \ge -\epsilon$}
\STATE Terminate and return $\x_k$;
\ELSE
\STATE $\d_k \gets -\sgn(\v^T\g_k) {|\v^T\H_k\v|} \v$; \label{line:dif_03}
\ENDIF
\ENDIF
\IF {$\dtype==\NC$} \label{line:dif_04}
\STATE Define $\alpha_k$ as in \cref{lemma:fixed_stepsize_nc_from_cappedcg}, to satisfy $\alpha_k \ge \frac{3}{4} \frac{\tilde\theta}{L_H+\eta}$ for some $\tilde\theta \in ((2-\sqrt{3})^2,1)$ 
\ELSE
\STATE $\alpha_k = \left[ \frac{3(1 - \zeta)}{4(L_H + \eta)}\right]^{1/2} \frac{\epsilonh^{1/2}}{\|\d_k\|^{1/2}}$ (defined in \cref{lemma:fixed_stepsize_with_sol}) 
\ENDIF \label{line:dif_05}
\STATE $\x_{k+1} \gets \x_k + \alpha_k \d_k$;

\ENDFOR
\end{algorithmic}
\end{algorithm}

The analysis of this section makes use of the following condition.

\begin{condition}\label{cond:opt_epsilong_epsilonh_fixedstepsize} 
% To make the output of Algorithm~\ref{alg:inexact_withlinesearch} satisfy Definition~\ref{def:optimality}, we require 
The inexact gradient $\g_k$ and Hessian $\H_k$ satisfy Condition \ref{cond:appr_gh} with 
% \steve{Simplified the bound on $\delta_{g,k}$. The more complicated one was the result of some incorrect analysis.}
% \zhewei{Really? Thanks a lot for checking!}
\begin{align*}
\delta_{g, k} &\leq \frac{1-\zeta}{8}\min\left(   \frac{3\epsilonh^2}{65(L_H+\eta)}, \max\big(\epsilong, \min(\epsilonh\|\d_k\|, \|\g_k\|, \|\g_{k+1}\|)\big)\right)\\
\text{and}~~~\delta_H &\leq \frac{1-\zeta}{4}\epsilon_H.
\end{align*}
Throughout this section, we fix $\epsilonh = \sqrt{L_H\epsilong}$, so that $\epsilong/\epsilonh = \sqrt{\epsilong/L_H}$.
\end{condition}

In the next three lemmas, we show that the choices of $\alpha_k$ in \cref{alg:fixed_stepsize} lead to the step length acceptance condition used in \cref{alg:inexact_withlinesearch} being satisfied, that is,
\begin{equation} \label{eq:bf1}
    -\frac{\eta}6 \alpha_k^3 \|\d_k\|^3 \geq f(\x_k + \alpha_k\d_k) - f(\x_k).
\end{equation}

We now show that the fixed step size can result in a sufficient descent in the function $f(\x_k)$ when $\dtype=\SOL$ and $\|\d_k\|\geq \sqrt{{\epsilong}/{L_H}}$. 
The following lemma can be viewed as a modification of Lemma \ref{lemma:dtype_sol_opt} with fixed step size. 

\begin{lemma}
\label{lemma:fixed_stepsize_with_sol}
Suppose that Assumption~\ref{ass:lip} is satisfied and that Condition~\ref{cond:opt_epsilong_epsilonh_fixedstepsize} holds for all $k$.
Suppose that at iteration $k$ of Algorithm~\ref{alg:fixed_stepsize}, we have $\|\g_k\| \ge \epsilong$, so that Procedure~\ref{alg:capped_cg} is called. 
When Procedure~\ref{alg:capped_cg} outputs a direction $\d_k$ with $\dtype=\SOL$ and $\|\d_k\|\geq\epsilong/\epsilonh$,  Algorithm~\ref{alg:fixed_stepsize} sets
\[
\alpha_k = \left[ \frac{3(1 - \zeta)}{4(L_H + \eta)}\right]^{1/2} \frac{\epsilonh^{1/2}}{\|\d_k\|^{1/2}}.
\]
The resulting step $\x_{k+1}=\x_k+\alpha_k\d_k$ satisfies
\[
    f(\x_k) -f(\x_{k+1}) \geq \csolb \epsilonh^3,
\]
where
\[
    \csolb = \frac{\eta}{6} \left[ \frac{3(1 - \zeta)}{4L_H(L_H + \eta)}\right]^{3/2}.
\]
\end{lemma}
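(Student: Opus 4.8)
The plan is to follow Case~2 of the proof of Lemma~\ref{lemma:dtype_sol_opt}, but with the prescribed constant step $\alpha_k$ replacing the backtracked value, so that the only real work is to check that this $\alpha_k$ makes the sufficient-decrease inequality \eqref{eq:bf1} hold. Since $\dtype=\SOL$, Part~1 of Lemma~\ref{lemma:d_from_cappedcg} applies, giving \eqref{eqn:pos} and \eqref{eqn:dk_sol_rk}; in particular the derivation of \eqref{eq:ic1} carries over verbatim, so that for every $\alpha\in[0,1]$,
\[
\alpha\,\g_k^T\d_k + \tfrac12\alpha^2\,\d_k^T\H_k\d_k \;\le\; -\tfrac12\alpha(1-\zeta)\epsilonh\|\d_k\|^2 .
\]
A first preliminary step is to verify $\alpha_k\in(0,1)$: from $\|\d_k\|\ge\epsilong/\epsilonh$ and $\epsilonh=\sqrt{L_H\epsilong}$ one gets $\epsilonh/\|\d_k\|\le L_H$, hence $\alpha_k\le\big(3(1-\zeta)L_H/(4(L_H+\eta))\big)^{1/2}<1$ because $\eta>0$ and $\zeta\in(0,1)$. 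This is needed so that \eqref{eq:ic1}, stated only for $\alpha\in[0,1]$, may be invoked at $\alpha=\alpha_k$.

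Next I would expand $f(\x_k+\alpha_k\d_k)$ using \eqref{eq:lipH2}, pass from the exact gradient and Hessian to $\g_k,\H_k$ at the cost of $\alpha_k\delta_{g,k}\|\d_k\|$ and $\tfrac12\alpha_k^2\deltah\|\d_k\|^2$ (Condition~\ref{cond:appr_gh}), and apply the displayed bound, to reach
\[
f(\x_k+\alpha_k\d_k)-f(\x_k)\;\le\;-\tfrac12\alpha_k(1-\zeta)\epsilonh\|\d_k\|^2+\alpha_k\delta_{g,k}\|\d_k\|+\tfrac12\alpha_k^2\deltah\|\d_k\|^2+\tfrac{L_H}{6}\alpha_k^3\|\d_k\|^3 .
\]
After dividing by $\alpha_k\|\d_k\|>0$, establishing \eqref{eq:bf1} reduces to showing
\[
\tfrac{L_H+\eta}{6}\alpha_k^2\|\d_k\|^2\;\le\;\tfrac12(1-\zeta)\epsilonh\|\d_k\|-\delta_{g,k}-\tfrac12\alpha_k\deltah\|\d_k\| .
\]
Here I would use the dichotomy of \eqref{eq:diff2}--\eqref{eq:diff2_2} (which rests on $\|\d_k\|\ge\epsilong/\epsilonh$ and the $\max(\cdot)$ term of Condition~\ref{cond:opt_epsilong_epsilonh_fixedstepsize}) to get $\delta_{g,k}\le\tfrac18(1-\zeta)\epsilonh\|\d_k\|$, together with $\deltah\le\tfrac14(1-\zeta)\epsilonh$ and $\alpha_k<1$ to get $\tfrac12\alpha_k\deltah\|\d_k\|\le\tfrac18(1-\zeta)\epsilonh\|\d_k\|$; the right-hand side is then at least $\tfrac14(1-\zeta)\epsilonh\|\d_k\|$, and the claim becomes $\alpha_k^2\le\tfrac{3(1-\zeta)\epsilonh}{2(L_H+\eta)\|\d_k\|}$, which holds with a factor of two to spare by the definition of $\alpha_k$. (The extra bound $\tfrac{3\epsilonh^2}{65(L_H+\eta)}$ on $\delta_{g,k}$ in Condition~\ref{cond:opt_epsilong_epsilonh_fixedstepsize} is not needed for this lemma; it is meant for the termination and negative-curvature analyses, where no line search is available to compensate for a too-large step.)

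With \eqref{eq:bf1} in hand, $f(\x_k)-f(\x_{k+1})\ge\tfrac{\eta}{6}\alpha_k^3\|\d_k\|^3$; substituting $\alpha_k$ gives $\tfrac{\eta}{6}\big(3(1-\zeta)/(4(L_H+\eta))\big)^{3/2}\epsilonh^{3/2}\|\d_k\|^{3/2}$, and then $\|\d_k\|\ge\epsilong/\epsilonh$ with $\epsilong=\epsilonh^2/L_H$ turns $\epsilonh^{3/2}\|\d_k\|^{3/2}\ge\epsilong^{3/2}=\epsilonh^3/L_H^{3/2}$, producing exactly $\csolb\epsilonh^3$. The only mildly delicate points are the verification $\alpha_k<1$ (so that \eqref{eq:ic1} can be used) and the constant bookkeeping converting $\epsilong^{3/2}$ into $\epsilonh^3$ via $\epsilonh=\sqrt{L_H\epsilong}$; the rest is a direct transcription of the corresponding computation in Lemma~\ref{lemma:dtype_sol_opt}, so I do not anticipate a genuine obstacle.
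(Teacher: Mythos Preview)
Your proposal is correct and follows essentially the same approach as the paper. The only cosmetic difference is that you establish \eqref{eq:bf1} by direct verification whereas the paper argues by contradiction, and you handle the $\deltah$ term via $\tfrac12\alpha_k\deltah\|\d_k\|$ while the paper absorbs it using $(1-\zeta)\epsilonh-\deltah\ge\tfrac12(1-\zeta)\epsilonh$; the resulting inequalities and the final substitution for $\csolb$ are identical.
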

\begin{proof} 
First, we prove that $\alpha_k \leq 1$. We have, using $\epsilonh = \sqrt{L_H \epsilong}$, that
\[
\alpha_k^2 
        =  \frac{3(1 - \zeta)\epsilonh}{4(L_H + \eta)\|\d_k\|} 
         \leq  \frac{3(1 - \zeta)\epsilonh^{2}}{4(L_H + \eta)\epsilong}
         = \frac{3(1 - \zeta)L_H}{4(L_H + \eta)} < 1.
\]
If we can show that \eqref{eq:bf1} holds, then we obtain the conclusion of the lemma by substituting the formula for $\alpha_k$ into this expression and using $\| \d_k \| \ge \epsilong/\epsilonh$ and $\epsilonh = \sqrt{L_H\epsilong}$.

Suppose for contradiction that condition \eqref{eq:bf1} is not satisfied. Then we have 
\begin{align*}
-\frac{\eta}6 \alpha_k^3 \|\d_k\|^3
&\leq f(\x_k + \alpha_k\d_k) - f(\x_k) \\
&\leq \alpha_k\nabla f_k^T\d_k + \frac{\alpha_k^2}{2} \d_k^T\nabla^2 f(\x_k) \d_k + \frac{L_H}6 \alpha_k^3 \|\d_k\|^3\\
& = \alpha_k\g_k^T\d_k + \frac{\alpha_k^2}{2} \d_k^T \H_k\d_k + \alpha_k(\nabla f_k-\g_k)^T\d_k + \frac{\alpha_k^2}{2} \d_k^T(\nabla^2 f(\x_k)-\H_k) \d_k \\
&~~~+ \frac{L_H}6 \alpha_k^3 \|\d_k\|^3 \\
& \leq -\frac{\alpha_k}2 (1-\zeta)\epsilonh\|\d_k\|^2 + \alpha_k\delta_{g,k}\|\d_k\| + \frac{\alpha_k^2}{2}\deltah\|\d_k\|^2 + \frac{L_H}6 \alpha_k^3\|\d_k\|^3  \quad \quad (\mbox{from \cref{eq:ic1}}) \\
& < \alpha_k\delta_{g,k}\|\d_k\|-\frac{\alpha_k}2 \|\d_k\|^2 \big((1-\zeta)\epsilonh-\deltah\big)  + \frac{L_H}6 \alpha_k^3\|\d_k\|^3 \quad \quad (\mbox{since $\alpha_k<1$}).
\end{align*}
By rearrangement, it follows that 
\begin{equation} \label{eq:sj8}
\frac{L_H+\eta}{6} \alpha_k^2 \| \d_k \|^2 - \frac12 \big((1-\zeta)\epsilonh-\deltah\big) \| \d_k \| + \delta_{g,k} >0.
\end{equation}
By substituting the definition of  $\alpha_k$ and using $\deltah\leq {(1-\zeta)}\epsilonh/{2}$ into the formula above, we have that \eqref{eq:sj8} implies
\begin{align*}
    \frac{L_H+\eta}{6}\left[ \frac{3(1 - \zeta)}{4(L_H + \eta)}\right] \frac{\epsilonh}{\|\d_k\|}\|\d_k\|^2 - \frac{(1-\zeta)\epsilonh}{4}\|\d_k\| + \delta_{g,k} & >0 \\
    \Leftrightarrow -\frac{(1-\zeta)}{8} \epsilonh \| \d_k \| + \delta_{g,k} & >0.
\end{align*}
By using $\delta_{g,k} \le (1-\zeta) \max\left(\epsilong, \min(\epsilonh\|\d_k\|, \|\g_k\|, \|\g_{k+1}\|)\right)/8$, this inequality implies that
\begin{equation} \label{eq:sj9}
- \epsilonh \|\d_k \| + \max\left(\epsilong, \min(\epsilonh\|\d_k\|, \|\g_k\|, \|\g_{k+1}\|)\right) >0.
\end{equation}
If $\epsilong > \min(\epsilonh\|\d_k\|, \|\g_k\|, \|\g_{k+1}\|)$, since $\epsilonh = \sqrt{L_H\epsilong}$, we have from \eqref{eq:sj9} that 
\[
   - \sqrt{L_H\epsilong}\|\d_k\| + \epsilong > 0  \Rightarrow \|\d_k\| < \sqrt{\epsilong/L_H},
\]
which contradicts our assumption $\|\d_k\| \geq \sqrt{{\epsilong}/{L_H}}=\epsilong/\epsilonh$.
Alternatively, if we assume that $\epsilong \le \min(\epsilonh\|\d_k\|, \|\g_k\|, \|\g_{k+1}\|)$, then from \eqref{eq:sj9}, it follows that  that
\[
0 < - \epsilonh\|\d_k\| + \min(\epsilonh\|\d_k\|, \|\g_k\|, \|\g_{k+1}\|)  \le  -\epsilonh\|\d_k\| +  \epsilonh\|\d_k\| =0,
\]
which is again a contradiction. Hence, our chosen value of $\alpha_k$ must satisfy \eqref{eq:bf1}, completing the~proof.
\end{proof}

Next, let us deal with the case when $\dtype=\NC$, which can be considered as a fixed-step alternative to Lemma~\ref{lemma:nc_from_cappedcg}. 
\begin{lemma}
\label{lemma:fixed_stepsize_nc_from_cappedcg}
Suppose that Assumption~\ref{ass:lip} is satisfied and that Condition~\ref{cond:opt_epsilong_epsilonh_fixedstepsize} holds for all $k$.
Suppose that at iteration $k$ of Algorithm~\ref{alg:fixed_stepsize}, we have $\|\g_k\| > \epsilong$, so that Procedure~\ref{alg:capped_cg} is called. 
When Procedure~\ref{alg:capped_cg} outputs a direction $\d_k$ with $\dtype=\NC$, we can choose the pre-defined step size 
\[
\alpha_k = \left( \frac{{(\|\d_k\|-\deltah)}/{2} + \sqrt{({(\|\d_k\|-\deltah)}/{2})^2 - 4{(L_H+\eta)}\delta_{g,k}/6}}{{(L_H+\eta)\|\d_k\|}/3} \right) \tilde\theta,
\]
% \steve{$6$ has been replaced by $3$ in the denominator, here and throughout.}
% \zhewei{Thank you so much for making the modification}
where $\tilde\theta$ is a parameter satisfying $(2-\sqrt{3})^2<\tilde\theta<1$. 
% \steve{There's a square in the lower bound now}
% \zhewei{I see, thanks for fixing it}
The resulting step $\x_{k+1}=\x_k+\alpha_k\d_k$ satisfies
$f(\x_k) -f(\x_{k+1}) \geq \cncb \epsilonh^3$,
where
\[
    \cncb :=  \frac{\eta}{6}\left[\frac{3 \tilde\theta}{4 (L_H+\eta)}\right]^3.                                                                                 
\]
\end{lemma}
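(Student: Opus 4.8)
The plan is to show that the prescribed fixed step $\alpha_k$ forces the step-acceptance inequality \eqref{eq:bf1}, i.e.\ $f(\x_k+\alpha_k\d_k)-f(\x_k)\le-\tfrac{\eta}{6}\alpha_k^3\norm{\d_k}^3$, and then to read off the claimed decrease from a lower bound on $\alpha_k\norm{\d_k}$. First I would record the two facts that make this go through. Since Procedure~\ref{alg:capped_cg} returned $\dtype=\NC$, Part~2 of \cref{lemma:d_from_cappedcg} gives $\d_k^T\H_k\d_k=-\norm{\d_k}^3$ with $\norm{\d_k}\ge\epsilonh$, hence by Condition~\ref{cond:appr_gh} $\d_k^T\nabla^2 f(\x_k)\d_k\le-\norm{\d_k}^3+\deltah\norm{\d_k}^2$ (this is \eqref{eq:rd9}); and the sign convention $\d_k\gets-\sgn(\d^T\g_k)(\cdots)$ on Line~\ref{line:dif_01} makes $\g_k^T\d_k\le0$, so $\nabla f(\x_k)^T\d_k\le\delta_{g,k}\norm{\d_k}$. (Here, unlike in \cref{alg:inexact_withlinesearch}, no bidirectional search is needed, because the sign of $\d_k$ is fixed in advance.) Feeding these into \eqref{eq:lipH2}, requiring the resulting bound to be $\le-\tfrac{\eta}{6}\alpha_k^3\norm{\d_k}^3$, and dividing by $\alpha_k\norm{\d_k}>0$, I reduce \eqref{eq:bf1} to the scalar quadratic inequality $q(\alpha_k)\le0$ with
\[
q(\alpha):=\frac{L_H+\eta}{6}\norm{\d_k}^2\alpha^2-\frac{\norm{\d_k}(\norm{\d_k}-\deltah)}{2}\alpha+\delta_{g,k}.
\]

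Next I would identify $\alpha_k$ with the roots of $q$. Writing $A=\tfrac{L_H+\eta}{6}\norm{\d_k}^2$, $B=\tfrac{\norm{\d_k}(\norm{\d_k}-\deltah)}{2}$, $C=\delta_{g,k}$, a short computation shows the step-size formula in the statement is exactly $\alpha_k=\tilde\theta\,\alpha_+$ with $\alpha_+=(B+\sqrt{B^2-4AC})/(2A)$ the larger root; since $A>0$, $q<0$ precisely on $(\alpha_-,\alpha_+)$. The bound $\alpha_k<\alpha_+$ is immediate from $\tilde\theta<1$, so what remains is $\alpha_k\ge\alpha_-$, equivalently $\tilde\theta\ge\alpha_-/\alpha_+=(1-\sqrt{1-\rho})/(1+\sqrt{1-\rho})$ where $\rho:=4AC/B^2$. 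That ratio is increasing in $\rho$ and equals $(2-\sqrt3)/(2+\sqrt3)=(2-\sqrt3)^2$ at $\rho=\tfrac14$, so it suffices to show $\rho\le\tfrac14$. Now $\rho=8(L_H+\eta)\delta_{g,k}\big/\big(3(\norm{\d_k}-\deltah)^2\big)$; from $\deltah\le\tfrac{1-\zeta}{4}\epsilonh<\tfrac14\epsilonh\le\tfrac14\norm{\d_k}$ I get $\norm{\d_k}-\deltah>\tfrac34\epsilonh$, and Condition~\ref{cond:opt_epsilong_epsilonh_fixedstepsize} gives $\delta_{g,k}<\tfrac{3\epsilonh^2}{520(L_H+\eta)}$, whence $\rho<\tfrac{16}{585}<\tfrac14$ (which also makes $B^2-4AC>0$, so $\alpha_+$ is real). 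Therefore $q(\alpha_k)<0$ and \eqref{eq:bf1} holds.

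Finally I would cash in \eqref{eq:bf1}. Dropping the square root in the numerator of $\alpha_+$ yields $\alpha_k=\tilde\theta\alpha_+\ge\tilde\theta\,\tfrac{B}{2A}=\tilde\theta\,\tfrac{3(\norm{\d_k}-\deltah)}{2(L_H+\eta)\norm{\d_k}}>\tfrac{3\tilde\theta}{4(L_H+\eta)}$, using $\norm{\d_k}-\deltah>\tfrac34\norm{\d_k}$ once more; this is the lower bound on $\alpha_k$ asserted on Line~\ref{line:dif_04} of \cref{alg:fixed_stepsize}. Combining \eqref{eq:bf1} with $\norm{\d_k}\ge\epsilonh$,
\[
f(\x_k)-f(\x_{k+1})\ge\frac{\eta}{6}\alpha_k^3\norm{\d_k}^3\ge\frac{\eta}{6}\Big(\frac{3\tilde\theta}{4(L_H+\eta)}\Big)^{3}\epsilonh^3=\cncb\,\epsilonh^3,
\]
as claimed. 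The genuine obstacle is the second paragraph: checking that the closed-form fixed step $\tilde\theta\alpha_+$ stays inside the feasibility interval $[\alpha_-,\alpha_+]$, i.e.\ the estimate $\rho\le\tfrac14$ together with its translation into $\tilde\theta>(2-\sqrt3)^2$. This is exactly the role of the constant $65$ in Condition~\ref{cond:opt_epsilong_epsilonh_fixedstepsize}: it buys enough slack that the gradient-inexactness term $C=\delta_{g,k}$ cannot push the smaller root $\alpha_-$ up past $\tilde\theta\alpha_+$. Everything else is a routine re-run of the negative-curvature backtracking analysis of \cref{lemma:nc_from_cappedcg}, now with $\alpha_k$ pinned in advance rather than discovered by line search.
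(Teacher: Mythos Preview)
Your proposal is correct and follows essentially the same route as the paper: reduce \eqref{eq:bf1} to the scalar quadratic $q(\alpha)\le 0$, identify $\alpha_k=\tilde\theta\alpha_+$, show $\alpha_k\in[\alpha_-,\alpha_+]$ via the discriminant bound (your $\rho\le\tfrac14$ is exactly the paper's $z^2>4c$ with $z=(\norm{\d_k}-\deltah)/2$, $c=\tfrac23(L_H+\eta)\delta_{g,k}$), and then lower-bound $\alpha_k$ to read off the decrease. The one substantive difference is that the paper deliberately \emph{weakens} $\norm{\d_k}\ge\epsilonh$ to $\norm{\d_k}\ge\tfrac12\epsilonh$ before running the argument, so that the entire computation can be reused verbatim in the next lemma (the Procedure~\ref{alg:minimum_eigenvalue} case, where only $\norm{\d_k}\ge\tfrac12\epsilonh$ holds); this is why the paper lands on $z\ge\epsilonh/8$ and needs the constant $65$ with so little slack, whereas your sharper $\norm{\d_k}-\deltah>\tfrac34\epsilonh$ yields $\rho<16/585$. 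Your version is cleaner for this lemma in isolation, but you would have to redo the discriminant estimate in \cref{lemma:fixedstepsize_nc_proc2} rather than citing this proof directly.
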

\begin{proof} 
We start by noting that under the assumptions of the lemma, we have
\begin{equation} \label{nncc.1}
    \d_k^T \H_k \d_k \le -\epsilonh \| \d_k \|^2, \quad \| \d_k \| \ge \epsilonh,
\end{equation}
We replace the lower bound on $\| \d_k \|$ by the weaker bound $\| \d_k \| \ge \tfrac12 \epsilonh$ (so that we can reuse our results in the next lemma) to obtain
\begin{equation} \label{eq:lw2}
\| \d_k \| \ge \frac12 \epsilonh, \quad \delta_H \le \frac14 \epsilonh \le \frac12 \| \d_k \| \;\; \mbox{and so} \;\; \| \d_k \| - \delta_H \ge \frac12 \| \d_k \| \ge \frac14 \epsilonh.
\end{equation}
Note too that $\d_k^T \g_k \le 0$ by design, so that from Definition~\ref{cond:appr_gh} of $\delta_{g,k}$, we have
\begin{equation} \label{eq:es4}
\d_k^T \nabla f(\x_k) \le \d_k^T \g_k + \| \d_k \| \| \nabla f(\x_k) - \g_k \| \le \delta_{g,k} \| \d_k \|.
\end{equation}
We therefore have
\begin{align*}
    f(\x_k + \alpha_k\d_k) - \f(\x_k) 
&\leq \alpha_k\nabla f(\x_k)^T\d_k + \frac{\alpha_k^{2}}{2} \d_k^T\nabla^2 f(\x_k) \d_k + \frac{L_H}6 \alpha_k^{3} \|\d_k\|^3\\
&\leq \alpha_k\delta_{g,k}\|\d_k\| -\frac{\alpha_k^2}{2}\|\d_k\|^3 + \frac{\alpha_k^2}{2}\deltah\|\d_k\|^2  + \frac{L_H}6 \alpha_k^3 \|\d_k\|^3. \quad \quad (\mbox{from \cref{eq:rd9}})
\end{align*}
Thus condition \eqref{eq:bf1} will be satisfied provided that 
\[
\alpha_k\delta_{g,k}\|\d_k\| -\frac{\alpha_k^2}{2}\|\d_k\|^3 + \frac{\alpha_k^2}{2}\deltah\|\d_k\|^2  + \frac{L_H}6 \alpha_k^3 \|\d_k\|^3 \le -\frac{\eta}6 \alpha_k^{3} \|\d_k\|^3.
\]
By rearranging and dividing by $\alpha_k \| \d_k \|$, we find that $\alpha_k$ satisfies \eqref{eq:bf1} provided that the following quadratic inequality in $\alpha_k$ is satisfied:
\begin{equation} \label{eq:qe2}
\left( \frac{(L_H+\eta)\|\d_k\|^2}{6}\right) \alpha_k^2  - \left( \frac{\|\d_k\|(\|\d_k\|-\deltah)}{2} \right) \alpha_k + \delta_{g,k}  \le 0.
\end{equation}
In fact this inequality is satisfied provided that $\alpha_k \in [\beta_2,\beta_1]$, where
\begin{align*}
\beta_1 & := \frac{{(\|\d_k\|-\deltah)}/{2} + \sqrt{({(\|\d_k\|-\deltah)}/{2})^2 - 4{(L_H+\eta)}\delta_{g,k}/6}}{{(L_H+\eta)\|\d_k\|}/3},  \\
\beta_2 &:= \frac{{(\|\d_k\|-\deltah)}/{2} - \sqrt{({(\|\d_k\|-\deltah)}/{2})^2 - 4{(L_H+\eta)}\delta_{g,k}/6}}{{(L_H+\eta)\|\d_k\|}/3}. 
\end{align*}
To verify that the quantity under the square root is positive, we use \eqref{eq:lw2} to write
\begin{align*}
\left(\frac{\|\d_k\|-\deltah}{2} \right)^2 - 4\frac{(L_H+\eta)}6 \delta_{g, k} 
& \geq \frac{1}{16}\|\d_k\|^2 - \frac{2(L_H+\eta)}{3}\delta_{g, k}  \\
& \geq \frac{1}{64} \epsilonh^2 - \frac{2(L_H+\eta)}{3}\delta_{g, k} 
> 0,
\end{align*}
where the last inequality follows from Condition~\ref{cond:opt_epsilong_epsilonh_fixedstepsize}, since 
\[
\delta_{g,k} \le \frac{3}{2\times 65} \frac{\epsilonh^2}{L_H+\eta} < \frac{3}{128} \frac{\epsilonh^2}{L_H+\eta}.
\]
(Note that $0<\beta_2<\beta_1$.)

Next, we show that our choice of $\alpha_k$, which equals $\tilde\theta \beta_1$, lies in the interval $(\beta_2,\beta_1)$.
First, we have  $\alpha_k=\tilde\theta\beta_1 < \beta_1$ since $\tilde\theta<1$. 
Second, proving $\alpha_k > \beta_2$ is equivalent to showing that $\tilde\theta > \beta_2 / \beta_1$. Defining 
\[
z:= \frac{\| \d_k \|-\delta_H}{2}, \quad c:= \frac23 (L_H+\eta) \delta_{g,k},
\]
we see that 
\[
\beta_1 =  \frac{z+\sqrt{z^2-c}}{(L_H+\eta)\|\d_k \|/3}, \quad 
\beta_2 =  \frac{z-\sqrt{z^2-c}}{(L_H+\eta)\|\d_k \|/3}, 
\]
so that the required condition is 
\[
\tilde\theta > \beta_2 / \beta_1 = \frac{z-\sqrt{z^2-c}}{z+\sqrt{z^2-c}} = \frac{(z-\sqrt{z^2-c})^2}{c}.
\]
% \steve{This argument needed a lot of fixing.}
% \zhewei{Good catch! Thank you so much}
We have from \eqref{eq:lw2} and Condition~\ref{cond:opt_epsilong_epsilonh_fixedstepsize} that
\[
z^2 = \left( \frac{\|\d_k \| - \delta_H}{2} \right)^2 \ge \frac{\epsilonh^2}{64} > \frac{\epsilonh^2}{65} \ge \frac{8}{3} (L_H+\eta) \delta_{g,k} = 4c.
\]
Since $z-\sqrt{z^2-c}$ is a decreasing function of $z$ for all $z^2 >c>0$, we have by using $z^2>4c$ that
\[
\frac{\beta_2}{\beta_1} = \frac{(z-\sqrt{z^2-c})^2}{c} < \frac{(2 \sqrt{c} - \sqrt{4c-c})^2}{c} = (2-\sqrt{3})^2 < \tilde\theta.
\]
We have therefore proved that $\alpha_k \in [\beta_2,\beta_1]$, so that $\alpha_k$ satisfies \eqref{eq:bf1}.

From \eqref{eq:lw2}, we have
\begin{align} \nonumber
\alpha_k = \tilde\theta \beta_1
& = \tilde\theta\frac{{(\|\d_k\|-\deltah)}/{2} + \sqrt{({(\|\d_k\|-\deltah)}/{2})^2 - 4{(L_H+\eta)}\delta_{g,k}/6}}{{(L_H+\eta)\|\d_k\|}/3} \\
\label{eq:lw3}
& \geq \tilde\theta\frac{\|\d_k\|/4}{{(L_H+\eta)\|\d_k\|}/3} 
= \frac{3}{4}\frac{\tilde\theta}{L_H+\eta}.
\end{align}
The final claim of the theorem is obtained by substituting this lower bound on $\alpha_k$ into \eqref{eq:bf1}, and using $\|\d_k \| \ge \epsilonh$.
\end{proof}

% \steve{Changed the next lemma to deal with Procedure 2 only. Changed the statement. The proof of the previous lemma has been retooled to make use only of the weaker bound available in this one.}
% \zhewei{Great idea to make the previous lemma loose!}

The next lemma shows that when $\dtype=\NC$ is obtained from Procedure~\ref{alg:minimum_eigenvalue}, the same fixed step size as in \cref{lemma:fixed_stepsize_nc_from_cappedcg} can be used, with the same lower bound on improvement in $f$.
\begin{lemma}\label{lemma:fixedstepsize_nc_proc2}
Suppose that Assumption~\ref{ass:lip} is satisfied and that Condition~\ref{cond:opt_epsilong_epsilonh_fixedstepsize} holds for all $k$.
Suppose that at iteration $k$ of Algorithm~\ref{alg:fixed_stepsize}, the step $\d_k$ is of negative curvature type, obtained from Procedure~\ref{alg:minimum_eigenvalue}. Then when we define $\alpha_k$ as in \cref{lemma:fixed_stepsize_nc_from_cappedcg}, we obtain
\begin{equation}
f(\x_k) - f(\x_k + \alpha_k \d_k) \ge \frac18 \cncb \epsilon_H^3, 
\end{equation}
where $\cncb$ is defined in \cref{lemma:fixed_stepsize_nc_from_cappedcg}.
\end{lemma}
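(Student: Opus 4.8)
The plan is to recognize that Lemma~\ref{lemma:fixed_stepsize_nc_from_cappedcg} was set up precisely so that its proof can be reused here: it was written to require only the weaker lower bound $\|\d_k\| \ge \tfrac12\epsilonh$ (see \eqref{eq:lw2}) together with the identity $\d_k^T\H_k\d_k = -\|\d_k\|^3$, the inequality $\d_k^T\g_k \le 0$, and Condition~\ref{cond:opt_epsilong_epsilonh_fixedstepsize}. So the first step is to check that the direction returned by Procedure~\ref{alg:minimum_eigenvalue}, after the rescaling in Line~\ref{line:dif_02} (or Line~\ref{line:dif_03}) of \cref{alg:fixed_stepsize}, satisfies exactly these properties. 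Procedure~\ref{alg:minimum_eigenvalue} returns a unit vector $\v$ with $\v^T\H_k\v \le -\epsilonh/2$, and the update sets $\d_k = -\sgn(\v^T\g_k)|\v^T\H_k\v|\v$. Since $\|\v\|=1$, this yields $\|\d_k\| = |\v^T\H_k\v| \ge \tfrac12\epsilonh$, $\d_k^T\H_k\d_k = |\v^T\H_k\v|^2\,\v^T\H_k\v = -|\v^T\H_k\v|^3 = -\|\d_k\|^3$, and $\d_k^T\g_k = -|\v^T\H_k\v|\,|\v^T\g_k| \le 0$. Combining $\d_k^T\H_k\d_k = -\|\d_k\|^3$ with Definition~\ref{cond:appr_gh} reproduces \eqref{eq:rd9}, and $\d_k^T\g_k \le 0$ with Definition~\ref{cond:appr_gh} reproduces \eqref{eq:es4}. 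Hence all the ingredients used in the proof of \cref{lemma:fixed_stepsize_nc_from_cappedcg} are available verbatim, the only change being that the lower bound on $\|\d_k\|$ is $\tfrac12\epsilonh$ instead of $\epsilonh$.

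The second step is to replay that proof. Starting from $f(\x_k+\alpha_k\d_k) - f(\x_k) \le \alpha_k\delta_{g,k}\|\d_k\| - \tfrac{\alpha_k^2}{2}\|\d_k\|^3 + \tfrac{\alpha_k^2}{2}\deltah\|\d_k\|^2 + \tfrac{L_H}{6}\alpha_k^3\|\d_k\|^3$ (derived exactly as before from \eqref{eq:lipH2}, \eqref{eq:rd9}, \eqref{eq:es4}), the step-acceptance condition \eqref{eq:bf1} reduces to the quadratic inequality \eqref{eq:qe2} in $\alpha_k$, which is satisfied for $\alpha_k \in [\beta_2,\beta_1]$ with the same $\beta_1,\beta_2$. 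The verification that the discriminant is positive and that $\tilde\theta\beta_1 > \beta_2$ used only the chain $\|\d_k\|-\deltah \ge \tfrac12\|\d_k\| \ge \tfrac14\epsilonh$ from \eqref{eq:lw2} and the bound $\delta_{g,k} \le \tfrac{3}{2\cdot65}\epsilonh^2/(L_H+\eta)$ from Condition~\ref{cond:opt_epsilong_epsilonh_fixedstepsize}; both hold here. So the prescribed $\alpha_k = \tilde\theta\beta_1$ lies in $[\beta_2,\beta_1]$, satisfies \eqref{eq:bf1}, and obeys $\alpha_k \ge \tfrac34\tilde\theta/(L_H+\eta)$ as in \eqref{eq:lw3}. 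Plugging \eqref{eq:lw3} and the weaker bound $\|\d_k\| \ge \tfrac12\epsilonh$ into \eqref{eq:bf1} gives $f(\x_k) - f(\x_k + \alpha_k\d_k) \ge \tfrac{\eta}{6}\alpha_k^3\|\d_k\|^3 \ge \tfrac{\eta}{6}\bigl[\tfrac{3\tilde\theta}{4(L_H+\eta)}\bigr]^3\bigl(\tfrac{\epsilonh}{2}\bigr)^3 = \tfrac18\cncb\epsilonh^3$, which is the claim.

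There is essentially no genuine obstacle here: the lemma is the fixed-step analogue of the relationship between \cref{lemma:nc_from_cappedcg} and \cref{lemma:nc_both_procedure}, and the factor $1/8$ arises in the same way, from $(\epsilonh/2)^3$ replacing $\epsilonh^3$. The one point worth double-checking carefully is precisely the computation in the first step — that the sign-and-scale operation $\d_k \gets -\sgn(\v^T\g_k)|\v^T\H_k\v|\v$ preserves $\d_k^T\H_k\d_k = -\|\d_k\|^3$ and forces $\d_k^T\g_k \le 0$ — since everything downstream is already-established bookkeeping, but this is immediate from $\|\v\| = 1$ and $\v^T\H_k\v < 0$.
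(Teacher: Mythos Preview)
Your proposal is correct and follows essentially the same approach as the paper: verify that the rescaled direction from Procedure~\ref{alg:minimum_eigenvalue} satisfies $\|\d_k\| \ge \tfrac12\epsilonh$ and $\d_k^T\H_k\d_k = -\|\d_k\|^3$, then reuse the proof of \cref{lemma:fixed_stepsize_nc_from_cappedcg} (which was deliberately written with the weaker bound \eqref{eq:lw2}) to obtain $\alpha_k \ge \tfrac{3\tilde\theta}{4(L_H+\eta)}$ and conclude via \eqref{eq:bf1} with the extra factor $1/8$ from $(\epsilonh/2)^3$. Your write-up is in fact more explicit than the paper's, which compresses this into a few lines.
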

\begin{proof}
Note that for $\d_k$ obtained from Procedure~\ref{alg:minimum_eigenvalue}, we have
\[
\d_k^T\H\d_k \leq -\frac{1}{2}\epsilonh\|\d_k\|^2, \quad \| \d_k \| \ge \frac12 \epsilong.
\]
Since the bulk of the proof of \cref{lemma:fixed_stepsize_nc_from_cappedcg} uses only the latter lower bound on $\|\d_k\|$, we can use this proof to derive the same lower bound \eqref{eq:lw3} on $\alpha_k$. The result follows by substituting this lower bound together with $\|\d_k \| \ge \epsilonh/2$ into \eqref{eq:bf1}.
\end{proof}

Using \cref{lemma:fixed_stepsize_with_sol,lemma:fixed_stepsize_nc_from_cappedcg,lemma:fixedstepsize_nc_proc2,lemma:terminate_step_condition_withlinesearch}, we are now ready to give the iteration complexity of \cref{alg:fixed_stepsize}.
\begin{theorem}
\label{thm:fixed_stepsize_complexity}
Suppose that Assumption~\ref{ass:lip} is satisfied and that Condition~\ref{cond:opt_epsilong_epsilonh_fixedstepsize} holds for all $k$.
For a given $\epsilon>0$, let $\epsilonh = \sqrt{L_H \epsilon}, \epsilong = \epsilon$. 
Define 
\begin{equation}\label{eq:k2}
\bar K_2 := 2 \left\lceil \frac{f(\x_0) - \flow}{\min\{\csolb,\cncb/8\} L_H^{3/2}} \epsilon^{-3/2} \right \rceil + 3,
\end{equation}
where $\csolb$ and $\cncb$ are defined in \cref{lemma:fixed_stepsize_with_sol} and \cref{lemma:fixed_stepsize_nc_from_cappedcg}, respectively. 
Then \cref{alg:fixed_stepsize} terminates in at most $\bar K_2$ iterations at a point $\x$ satisfying 
$\norm{\nabla f(\x)} \lesssim \epsilon$.
Moreover, with probability at least $(1 - \delta)^{\bar K_2}$, the point returned by \cref{alg:fixed_stepsize} also satisfies the approximate second-order condition
%\begin{equation} \label{eqn:2oeps}
$\lambda_{\min}(\nabla^2 f(\x)) \gtrsim -\sqrt{L_H\epsilon}$.
% \end{equation}
\reftwo{Here again, $\lesssim$ and $\gtrsim$ denote that the corresponding inequality holds up to a certain constant that is independent of $ \epsilon $ and $ L_{H} $.}
\end{theorem}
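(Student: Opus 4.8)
The plan is to follow the proof of \cref{thm:opt_iteration_comp} almost line for line, substituting the three fixed–step descent estimates (\cref{lemma:fixed_stepsize_with_sol,lemma:fixed_stepsize_nc_from_cappedcg,lemma:fixedstepsize_nc_proc2}) for the line–search ones (\cref{lemma:dtype_sol_opt,lemma:nc_from_cappedcg,lemma:nc_both_procedure}). Note first that \cref{cond:opt_epsilong_epsilonh_fixedstepsize} implies \cref{cond:opt_epsilong_epsilonh} (its bound on $\delta_{g,k}$ is the same $\max(\cdots)$ quantity intersected with an extra term, and the $\delta_H$ bounds coincide), so \cref{lemma:terminate_step_condition_withlinesearch} remains available. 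Assume for contradiction that \cref{alg:fixed_stepsize} runs for at least $K$ iterations with $K>\bar K_2$. As in the proof of \cref{thm:opt_iteration_comp}, for $k\in\{0,\dots,K-1\}$ let $\d_k$ and $\dtype$ denote the values of these variables \emph{immediately before the update $\x_{k+1}\gets\x_k+\alpha_k\d_k$ or a termination}, so that any reassignment of $\d_k$ to a negative curvature vector from Procedure~\ref{alg:minimum_eigenvalue} is already reflected. Partition the iterations into $\mathcal K_1:=\{k:\norm{\g_k}<\epsilong\}$, $\mathcal K_2:=\{k:\norm{\g_k}\ge\epsilong,\ \dtype=\SOL,\ \norm{\d_k}>\sqrt{\epsilong/L_H}\}$, $\mathcal K_3:=\{k:\norm{\g_k}\ge\epsilong,\ \dtype=\SOL,\ \norm{\d_k}\le\sqrt{\epsilong/L_H}\}$, and $\mathcal K_4:=\{k:\norm{\g_k}\ge\epsilong,\ \dtype=\NC\}$, so that $K=|\mathcal K_1|+|\mathcal K_2|+|\mathcal K_3|+|\mathcal K_4|$.

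Next I would bound the decrease per iteration in each class, using $\epsilonh=\sqrt{L_H\epsilon}$ (so $\epsilonh^3=L_H^{3/2}\epsilon^{3/2}$). For $k\in\mathcal K_1$ the step comes from Procedure~\ref{alg:minimum_eigenvalue}, so either the algorithm terminates (at most once in the whole run) or \cref{lemma:fixedstepsize_nc_proc2} gives $f(\x_k)-f(\x_{k+1})\ge\tfrac18\cncb\epsilonh^3$. For $k\in\mathcal K_2$ the step is an inexact damped Newton step with $\norm{\d_k}\ge\epsilong/\epsilonh$, so \cref{lemma:fixed_stepsize_with_sol} gives $f(\x_k)-f(\x_{k+1})\ge\csolb\epsilonh^3$. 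For $k\in\mathcal K_3$, Procedure~\ref{alg:capped_cg} returned $\dtype=\SOL$ with $\norm{\d_k}\le\epsilong/\epsilonh$ and $\dtype$ was not switched to $\NC$, so Procedure~\ref{alg:minimum_eigenvalue} certified $\lambda_{\min}(\H_k)\ge-\epsilonh$ and by \cref{lemma:terminate_step_condition_withlinesearch} the algorithm terminates; hence $|\mathcal K_3|\le1$. For $k\in\mathcal K_4$ no termination can occur (termination requires either $\norm{\g_k}<\epsilong$ or $\dtype=\SOL$), and the step follows a negative curvature direction for $\H_k$ found either in Procedure~\ref{alg:capped_cg} (then $f(\x_k)-f(\x_{k+1})\ge\cncb\epsilonh^3$ by \cref{lemma:fixed_stepsize_nc_from_cappedcg}) or in Procedure~\ref{alg:minimum_eigenvalue} (then $f(\x_k)-f(\x_{k+1})\ge\tfrac18\cncb\epsilonh^3$ by \cref{lemma:fixedstepsize_nc_proc2}). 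Thus every iteration that actually takes a step — that is, every iteration outside the at most two terminating/returning iterations — decreases $f$ by at least $\min\{\csolb,\cncb/8\}\,\epsilonh^3=\min\{\csolb,\cncb/8\}\,L_H^{3/2}\,\epsilon^{3/2}$.

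Summing the decreases over $\mathcal K_1\cup\mathcal K_2\cup\mathcal K_4$, using $f(\x_k)-f(\x_{k+1})\ge0$ always, and the telescoping bound $\sum_{k=0}^{K-1}\bigl(f(\x_k)-f(\x_{k+1})\bigr)\le f(\x_0)-\flow$, bounds $|\mathcal K_1|+|\mathcal K_2|+|\mathcal K_4|$ by $\lceil (f(\x_0)-\flow)/(\min\{\csolb,\cncb/8\}L_H^{3/2})\,\epsilon^{-3/2}\rceil+1$; adding $|\mathcal K_3|\le1$ yields $K\le\bar K_2$, contradicting $K>\bar K_2$, so \cref{alg:fixed_stepsize} halts within $\bar K_2$ iterations. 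The first–order guarantee at the returned point is then exactly as in \cref{thm:opt_iteration_comp}: if the algorithm returns $\x_k$ from the $\norm{\g_k}<\epsilong$ branch, \cref{cond:opt_epsilong_epsilonh_fixedstepsize} gives $\delta_{g,k}\le\tfrac{1-\zeta}{8}\epsilong$, so $\norm{\nabla f(\x_k)}\le\norm{\g_k}+\delta_{g,k}\lesssim\epsilon$; if it returns $\x_k+\d_k$ with $\dtype=\SOL$ and $\norm{\d_k}\le\epsilong/\epsilonh$, \cref{lemma:terminate_step_condition_withlinesearch} gives $\norm{\nabla f(\x_k+\d_k)}\le\tfrac{L_H}{2}\epsilong^2/\epsilonh^2+4\epsilong=\tfrac92\epsilon$.

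For the second–order statement and the failure probability I would reuse the argument of \cref{thm:opt_iteration_comp} verbatim. Conditioned on the issued Procedure~\ref{alg:minimum_eigenvalue} certificate being correct, \cref{lemma:terminate_step_condition_withlinesearch} gives $\lambda_{\min}(\nabla^2 f(\x_k+\d_k))\ge-\bigl(\tfrac54\epsilonh+L_H\epsilong/\epsilonh\bigr)=-\tfrac94\sqrt{L_H\epsilon}$ when returning $\x_k+\d_k$, and directly $\lambda_{\min}(\nabla^2 f(\x_k))\ge-\epsilonh=-\sqrt{L_H\epsilon}$ when returning $\x_k$. Defining $\tilde P_k$, $P_k$, and $M_k$ as in that proof (with $P_k\le\delta\tilde P_k$ and $\tilde P_k+\sum_{i<k}P_i\le1$), the identical induction gives $\sum_{i\le k}P_i\le1-(1-\delta)^{M_k}$, and since $M_k\le k\le\bar K_2$, the probability that an incorrect certificate is ever issued is at most $1-(1-\delta)^{\bar K_2}$; hence the second–order condition holds at the returned point with probability at least $(1-\delta)^{\bar K_2}$. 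The only point requiring care — and the main, rather modest, obstacle — is the bookkeeping: verifying that every step-taking iteration falls under exactly one of the three fixed–step descent lemmas, in particular after the reassignment of $\d_k$ via Procedure~\ref{alg:minimum_eigenvalue}, and that \cref{cond:opt_epsilong_epsilonh_fixedstepsize} supplies every inexactness bound those lemmas invoke. No genuinely new analytical difficulty arises relative to \cref{thm:opt_iteration_comp}.
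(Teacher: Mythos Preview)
Your proposal is correct and follows essentially the same approach as the paper. The paper retains the five-class partition $\mathcal K_1,\dotsc,\mathcal K_5$ from the proof of \cref{thm:opt_iteration_comp} and then immediately merges $\mathcal K_2$ and $\mathcal K_3$ (since \cref{lemma:fixed_stepsize_with_sol} yields the uniform decrease $\csolb\epsilonh^3$ regardless of $\|\g_{k+1}\|$), whereas you collapse these into a single class from the outset; this yields a slightly tighter count ($\lceil\cdots\rceil+2$ rather than $2\lceil\cdots\rceil+3$), but both establish the stated bound $\bar K_2$, and the remaining first-order, second-order, and probability arguments are identical.
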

\begin{proof}
The proof tracks that of \cref{thm:opt_iteration_comp} closely, so we omit much of the detail and discussion.

For contradiction, we assume that \cref{alg:fixed_stepsize} runs for at least $K$ steps, where $K>\bar K_2$. We partition the set of iteration indices $\{1,2,\dotsc,K \}$ into the same sets $\mathcal K_1, \dotsc, \mathcal K_5$ as in the proof of \cref{thm:opt_iteration_comp}. 
Considering each of these sets in turn, we have the following.

\paragraph{Case 1:} $k \in \mathcal K_1$. Either \cref{alg:fixed_stepsize} terminates (which happens at most once for $k \in \mathcal K_1$) or we achieve a reduction in $f$ of at least $\tfrac18 \cncb \epsilonh^3 = \tfrac18 \cncb L_H^{3/2} \epsilon^{3/2}$ (\cref{lemma:fixedstepsize_nc_proc2}).

\paragraph{Cases 2 and 3:} $k \in \mathcal K_2 \cup \mathcal K_3$. $f$ is reduced by at least $\csolb L_H^{3/2} \epsilon^{3/2}$ (\cref{lemma:fixed_stepsize_with_sol}).

\paragraph{Case 4:} $k \in \mathcal K_4$. The algorithm terminates, so we must have $|\mathcal K_4| \le 1$.

\paragraph{Case 5:} $k \in \mathcal K_5$. Either the algorithm terminates, or we achieve a reduction of at least $\cncb L_H^{3/2} \epsilon^{3/2}$ (\cref{lemma:fixed_stepsize_nc_from_cappedcg}).

Reasoning as in the proof of \cref{thm:opt_iteration_comp}, we have that 
\[
    f(\x_0) - \flow \ge (| \mathcal K_1|-1) \tfrac18 \cncb L_H^{3/2} \epsilon^{3/2} + (| \mathcal K_2| + | \mathcal K_3| ) \csolb L_H^{3/2} \epsilon^{3/2}  + (| \mathcal K_5|-1) \cncb L_H^{3/2} \epsilon^{3/2},
\]
from which we obtain
\begin{align*}
    | \mathcal K_1| + | \mathcal K_5| -2 & \le \frac{f(\x_0)-\flow}{\tfrac18 \cncb L_H^{3/2}} \epsilon^{-3/2}, \\
    | \mathcal K_2| + | \mathcal K_3|  & \le \frac{f(\x_0)-\flow}{\csolb L_H^{3/2}}\epsilon^{-3/2}.
\end{align*}
By using these bounds along with $| \mathcal K_4| \le 1$, we obtain
\[
K \le \sum_{i=1}^5 | \mathcal K_i|  \le 2 \frac{f(\x_0) - \flow}{\min(\cncb/8,\csolb) L_H^{3/2}} \epsilon^{-3/2} + 3,
\]
which contradicts  our assumption that $K > \bar K_2$.

The proof of the remaining claim, concerning the approximate second-order condition, is identical to the corresponding section in the proof of \cref{thm:opt_iteration_comp}.

\end{proof}

Note that the worst-case iteration complexity of \cref{alg:fixed_stepsize} has the same dependence on $\epsilon$ as \cref{alg:inexact_withlinesearch} despite the function evaluation no longer being required. The terms in the bound that do not depend on $\epsilon$ are, however, generally worse for \cref{alg:fixed_stepsize}.

\reftwo{We conclude with a discussion of Conditions \ref{cond:opt_epsilong_epsilonh} and \ref{cond:opt_epsilong_epsilonh_fixedstepsize}.
These conditions allow for the accuracy of $\g_k$ to be chosen adaptively, depending on problem-dependent constants, algorithmic parameters, the desired solution tolerances $\epsilong$ and $\epsilonh$, and the quantities $\|\d_k\|$, $\|g_{k+1}\|$, and $\|\g_k\|$. The quantity $\|\g_k\|$ is easy to evaluate (since, after all, $\g_k$ is the quantity actually calculated). However, the dependence on the quantities $\| \d_k\|$ and $\| \g_{k+1} \|$ is more problematic, since $\g_k$ is needed to compute both $\d_k$ and $\g_{k+1}$. 
Thus, the bounds on $\delta_{g,k}$ in Conditions \ref{cond:opt_epsilong_epsilonh} and \ref{cond:opt_epsilong_epsilonh_fixedstepsize} can be checked only ``in retrospect," not enforced as an a priori condition.
We can deal with this issue by checking the bound on $\delta_{g,k}$ after the step to $\x_{k+1}$ has been taken. 
if it fails to be satisfied, we can improve the accuracy of $\g_k$ and re-do iteration $k$.
If we halve $\delta_{g,k}$ each time the step is recomputed, the number of recomputations is at worst a multiple of $\log \epsilong$ (since the bound on $\delta_{g,k}$ in both conditions is at least $(1-\zeta)\epsilong/8$), so our complexity bounds are not affected significantly.
We choose to elide this fairly uninteresting issue in our analysis, and simply assume for simplicity that the  relevant bound on $\delta_{g,k}$ holds at each iteration.
}

% {\bf PREVIOUS TEXT:}
\iffalse
\reftwo{Before ending this section, we emphasize that although the adaptivity of Conditions \ref{cond:opt_epsilong_epsilonh} and \ref{cond:opt_epsilong_epsilonh_fixedstepsize} is a desirable theoretical property, the main drawback lies in the difficulty of enforcing them in practice. 
Indeed, a priori enforcing Conditions \ref{cond:opt_epsilong_epsilonh} and \ref{cond:opt_epsilong_epsilonh_fixedstepsize} requires one to have already taken the $k^{\textnormal{th}}$ iteration, which itself can be done only after computing $\g_{k}$, hence creating a vicious circle. 
A posteriori guarantees can be given if one obtains a lower-bound estimate on the yet-to-be-computed $ \|\g_{k}\| $ and $ \|\g_{k+1}\| $, i.e., to have  $ g_{0}>0 $  such that $ g_{0} \leq \min\{\|\g_{k}\|,\|\g_{k+1}\|\} $. 
This allows one to consider a stronger, but practically enforceable, condition. However, to obtain such a lower-bound estimate on $\|\g_{k}\|$ and $\|\g_{k+1}\|$, one has to resort to a recursive procedure, which necessitates repeated constructions of the approximate gradient and subsequent solutions of the corresponding sub-problems. 
Clearly, this procedure will result in a significant computational overhead and will lead to undesirable theoretical complexities. 
An important area for future work is the investigation of ways to modify the adaptivity of Conditions~\ref{cond:opt_epsilong_epsilonh} and \ref{cond:opt_epsilong_epsilonh_fixedstepsize} in such a way that they can be enforced in practice.}
\fi

\subsection{Evaluation complexity of Algorithm~\ref{alg:fixed_stepsize} for finite-sum problems}
\label{sec:finite-sum}
When $f$ has finite-sum form \cref{eq:finte_sum_problem} for $n \gg 1$, we consider subsampling schemes for estimating $\g_k$ and $\H_k$, as in \cite{roosta2019sub,xuNonconvexTheoretical2017}. 
We can define the subsampled quantities as follows
\begin{align}\label{eq:sub_gh}
\g \triangleq \frac{1}{\Abs{\mathcal S_g}} \sum_{i\in\mathcal S_g} \nabla f_i(\x), ~~ \text{and} ~~~ \H \triangleq \frac{1}{\Abs{\mathcal S_H}} \sum_{i\in\mathcal S_H} \nabla^2 f_i(\x),
\end{align}
where $\mathcal S_g, \mathcal S_H\subset\{1,\cdots, n\}$ are the subsample batches for the estimates of the gradient and Hessian, respectively. 
In~\citet[Lemma 1 and 2]{roosta2019sub} and \citet[Lemma 16]{xuNonconvexTheoretical2017}, 
it is shown that with a uniform sampling strategy, the following lemma can be proved.
\begin{lemma}[Sampling complexity \citep{roosta2019sub,xuNonconvexTheoretical2017}]
\label{lemma:sampling}
Suppose that Assumption~\ref{ass:lip} is satisfied, and let $\bar\delta \in (0,1)$ be given.
Suppose that at iteration $k$ of Algorithm~\ref{alg:fixed_stepsize}, $\delta_{g,k}$ and $\delta_H$ are as defined in Condition~\ref{cond:opt_epsilong_epsilonh_fixedstepsize}.
Also, let $ 0< K_g,K_H < \infty $ be such that $ \norm{\nabla f_i(\x)} \le K_g$ and $\norm{\nabla^2 f_i(\x)} \le K_H$ for all $\x$ belonging to the set defined in Assumption~\ref{ass:lip}.
For $\g_k$ and $\H_k$ defined as in \eqref{eq:sub_gh} with $\x=\x_k$, and subsample sets  $\mathcal S_g = \mathcal S_{g,k}$ and $\mathcal S_H$ satisfying 
\begin{align*}
\Abs{\mathcal S_{g,k}} \ge \frac{16K_g^2}{\delta_{g,k}^2} \log\frac{1}{\bar\delta} \quad \text{and} \quad \Abs{\mathcal S_H} \ge \frac{16K_H^2}{\delta_H^2} \log\frac{2d}{\bar\delta},
\end{align*}
we have with probability at least $1-\bar\delta$ that Condition~\ref{cond:opt_epsilong_epsilonh_fixedstepsize} holds for the given values of $\delta_{g,k}$ and $\delta_H$.
\end{lemma}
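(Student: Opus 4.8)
The essential observation is that \cref{cond:opt_epsilong_epsilonh_fixedstepsize} is nothing more than \cref{cond:appr_gh} instantiated with the two prescribed accuracy levels $\delta_{g,k}$ and $\delta_H$. Hence it suffices to establish the two norm bounds $\norm{\g_k - \nabla f(\x_k)} \le \delta_{g,k}$ and $\norm{\H_k - \nabla^2 f(\x_k)} \le \delta_H$ and then combine them by a union bound. With uniform sampling, the estimators in \eqref{eq:sub_gh} are averages of $\Abs{\mathcal S_{g,k}}$ (resp.\ $\Abs{\mathcal S_H}$) independent terms, each an unbiased estimate of $\nabla f(\x_k)$ (resp.\ $\nabla^2 f(\x_k)$); moreover, under \cref{ass:lip} together with the boundedness hypotheses $\norm{\nabla f_i(\x)} \le K_g$ and $\norm{\nabla^2 f_i(\x)} \le K_H$ on the sub-level set, each summand is bounded in norm. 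So the whole argument reduces to applying the right concentration inequality for sums of bounded independent vectors and matrices, and the content is entirely in those cited inequalities.

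For the gradient, I would invoke a vector Bernstein/Hoeffding-type inequality for the average of i.i.d.\ bounded random vectors, exactly as in \citet[Lemma 1]{roosta2019sub}: it guarantees that once $\Abs{\mathcal S_{g,k}} \ge 16 K_g^2 \delta_{g,k}^{-2}\log(1/\bar\delta)$, one has $\norm{\g_k - \nabla f(\x_k)} \le \delta_{g,k}$ with probability at least $1-\bar\delta$. For the Hessian, each $\nabla^2 f_i(\x_k)$ is symmetric with spectral norm at most $K_H$, so an operator/matrix Bernstein inequality of the type used in \citet[Lemma 16]{xuNonconvexTheoretical2017} and \citet[Lemma 2]{roosta2019sub} applies; this introduces the dimensional factor $2d$ inside the logarithm and gives $\norm{\H_k - \nabla^2 f(\x_k)} \le \delta_H$ with probability at least $1-\bar\delta$ once $\Abs{\mathcal S_H} \ge 16 K_H^2 \delta_H^{-2}\log(2d/\bar\delta)$. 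Taking a union bound over these two events yields \cref{cond:opt_epsilong_epsilonh_fixedstepsize}; if one insists on the joint statement holding with probability exactly $1-\bar\delta$, one replaces $\bar\delta$ by $\bar\delta/2$ in the two sample-size bounds, which affects only the constants. There is no circularity in treating $\delta_{g,k}$ as a fixed target accuracy here: the dependence of the bound on $\norm{\d_k}$, $\norm{\g_k}$, $\norm{\g_{k+1}}$ is handled by the ``recompute-if-necessary'' device discussed at the end of \cref{sec:fixed_step_size}.

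I do not expect any genuine obstacle. The only care needed is bookkeeping: matching the constant $16$ and the precise logarithmic arguments to the particular forms of the vector and matrix Bernstein inequalities being cited, confirming that with-replacement sampling makes the summands genuinely i.i.d.\ (or, for without-replacement sampling, citing the analogous — and no weaker — bounds), and absorbing the union-bound factor of two into $\bar\delta$. In short, the proof is an instantiation of known concentration results, so I would keep it to a few lines quoting \citet[Lemmas 1 and 2]{roosta2019sub} and \citet[Lemma 16]{xuNonconvexTheoretical2017} and plugging in the values of $\delta_{g,k}$ and $\delta_H$ from \cref{cond:opt_epsilong_epsilonh_fixedstepsize}.
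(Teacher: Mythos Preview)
Your proposal is correct and mirrors the paper exactly: the paper does not supply its own proof of this lemma but simply cites \citet[Lemmas~1 and 2]{roosta2019sub} and \citet[Lemma~16]{xuNonconvexTheoretical2017}, which is precisely the route you describe (vector Bernstein/Hoeffding for the gradient, matrix Bernstein for the Hessian, then a union bound). Your bookkeeping remarks about constants, the $2d$ factor, and the union-bound slack are apt but go slightly beyond what the paper itself records.
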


For the choices of $\epsilon_g$ and $\epsilon_H$ being used in this section, and assuming that $\delta_{g,k}$ and $\delta_{H}$ are set to their upper bounds in Condition~\ref{cond:opt_epsilong_epsilonh_fixedstepsize}, we can derive a uniform condition on the required subsample~sizes.
\begin{lemma} \label{lemma:sampling2}
Suppose the conditions of Lemma~\ref{lemma:sampling} holds, and that for some $\epsilon>0$, we set $\epsilon_H = \sqrt{L_H \epsilon}$ and $\epsilon_g = \epsilon$.  
Suppose that at some iteration $k$, $\delta_{g,k}$ and $\delta_H$ are set to their upper bounds in Condition~\ref{cond:opt_epsilong_epsilonh_fixedstepsize}. Then we have that $\delta_{g,k} \ge \bar\delta_g$ for all $k$ and $\delta_H = \bar\delta_H$, where
\begin{equation} \label{eq:bardgdh}
\reftwo{\bar\delta_g = \frac{1-\zeta}{8} \min \left( \frac{3 L_H \epsilon }{65(L_H+\eta)}, \epsilon \right) = \bigO(\epsilon),} \quad
\bar\delta_H = \left( \frac{1-\zeta}{4} \right) \sqrt{L_H \epsilon} = \bigO(\epsilon^{1/2}).
\end{equation}
Moreover, when $\g_k$ and $\H_k$ are estimated from \eqref{eq:sub_gh} with $\x=\x_k$ and subsample sets $\mathcal S_g = \mathcal S_{g,k}$ and $\mathcal S_H$ satisfying
\begin{align*}
\Abs{\mathcal S_g} \ge \frac{16K_g^2}{\bar\delta_g^2} \log\frac{1}{\bar\delta} = \bigO(\epsilon^{-2}), \quad \Abs{\mathcal S_H} \ge \frac{16K_H^2}{\bar\delta_H^2} \log\frac{2d}{\bar\delta} = \bigO(\epsilon^{-1}),
\end{align*}
then  Condition~\ref{cond:opt_epsilong_epsilonh_fixedstepsize} is satisfied at iteration $k$ with probability at least $1-\bar\delta$.
\end{lemma}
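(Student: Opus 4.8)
The plan is to prove this as a direct substitution-and-bookkeeping consequence of \cref{lemma:sampling}, specialized to the choices $\epsilonh = \sqrt{L_H\epsilon}$ and $\epsilong = \epsilon$, with the one genuinely necessary step being the extraction of a $k$-independent lower bound on $\delta_{g,k}$.

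First I would record the exact value of $\delta_H$. Since $\delta_H$ is set to its upper bound in Condition~\ref{cond:opt_epsilong_epsilonh_fixedstepsize}, it equals $\tfrac{1-\zeta}{4}\epsilonh = \tfrac{1-\zeta}{4}\sqrt{L_H\epsilon} = \bar\delta_H$, which is exact (not merely an inequality) and is clearly $\bigO(\epsilon^{1/2})$. Next I would lower-bound $\delta_{g,k}$. When set to its upper bound, $\delta_{g,k} = \tfrac{1-\zeta}{8}\min\!\left(\tfrac{3\epsilonh^2}{65(L_H+\eta)},\,\max\!\left(\epsilong,\min(\epsilonh\|\d_k\|,\|\g_k\|,\|\g_{k+1}\|)\right)\right)$. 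Using $\epsilonh^2 = L_H\epsilon$ for the first argument of the outer $\min$, and the trivial bound $\max(\epsilong,\cdot)\ge\epsilong=\epsilon$ for the second, each argument is bounded below independently of $k$, which gives $\delta_{g,k}\ge \tfrac{1-\zeta}{8}\min\!\left(\tfrac{3L_H\epsilon}{65(L_H+\eta)},\epsilon\right)=\bar\delta_g$. Since both arguments of this last $\min$ are proportional to $\epsilon$, $\bar\delta_g=\bigO(\epsilon)$ (and one also has $\delta_{g,k}=\bigO(\epsilon)$ since the outer $\min$ is at most a constant times $\epsilong$, though only the lower bound is needed below). The point of this step is that the bound is uniform in $k$ even though $\delta_{g,k}$ itself depends on $k$ through $\|\d_k\|$, $\|\g_k\|$, $\|\g_{k+1}\|$; this uniformity is exactly what permits a single subsample size to work at every iteration.

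It then remains to feed these bounds into \cref{lemma:sampling}, which guarantees Condition~\ref{cond:opt_epsilong_epsilonh_fixedstepsize} with probability at least $1-\bar\delta$ provided $|\mathcal S_{g,k}|\ge 16K_g^2\delta_{g,k}^{-2}\log(1/\bar\delta)$ and $|\mathcal S_H|\ge 16K_H^2\delta_H^{-2}\log(2d/\bar\delta)$. Because $\delta_{g,k}\ge\bar\delta_g$ we have $16K_g^2\delta_{g,k}^{-2}\log(1/\bar\delta)\le 16K_g^2\bar\delta_g^{-2}\log(1/\bar\delta)$, so the stated choice $|\mathcal S_g|\ge 16K_g^2\bar\delta_g^{-2}\log(1/\bar\delta)$ meets the gradient requirement for every $k$; and since $\delta_H=\bar\delta_H$ exactly, the stated choice $|\mathcal S_H|\ge 16K_H^2\bar\delta_H^{-2}\log(2d/\bar\delta)$ meets the Hessian requirement. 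The order estimates $|\mathcal S_g|=\bigO(\epsilon^{-2})$ and $|\mathcal S_H|=\bigO(\epsilon^{-1})$ are then immediate from $\bar\delta_g=\bigO(\epsilon)$ and $\bar\delta_H=\bigO(\epsilon^{1/2})$, and invoking \cref{lemma:sampling} finishes the argument.

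I do not anticipate any real obstacle here: the whole statement is a corollary of \cref{lemma:sampling} once the substitutions are made. The only subtlety worth being careful about is the one highlighted above, namely that $\delta_{g,k}$ is iteration-dependent, so one must pass to the uniform lower bound $\bar\delta_g$ before quoting a single sample-size bound; replacing $\delta_{g,k}$ by $\bar\delta_g$ only enlarges the required batch size, so this step is legitimate and loses nothing in the $\epsilon$-dependence.
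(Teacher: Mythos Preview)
Your proposal is correct and matches the paper's own argument essentially line for line: the paper's proof simply lower-bounds the second argument of the outer $\min$ by $\epsilong=\epsilon$, substitutes $\epsilonh^2=L_H\epsilon$ into the first, and declares the $\bar\delta_H$ claims immediate. Your write-up is more explicit about feeding the resulting uniform bounds back through \cref{lemma:sampling}, but the content is identical.
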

\begin{proof}
The right-hand side of the bound on $\delta_{g,k}$ in Condition~\ref{cond:opt_epsilong_epsilonh_fixedstepsize} is bounded below by
\[
\frac{1-\zeta}{8} \min \left( \frac{3 \epsilon_H^2}{65(L_H+\eta)}, \epsilon_g \right) =
\frac{1-\zeta}{8} \min \left( \frac{3 L_H \epsilon }{65(L_H+\eta)}, \epsilon \right)  = \bar\delta_g = \bigO(\epsilon),
\]
as claimed.
The claims concerning $\bar\delta_H$ are immediate.
\end{proof}

By combining \cref{lemma:sampling2} with Theorem~\ref{thm:fixed_stepsize_complexity}, we can obtain an oracle complexity result in which the oracle is either an evaluation of a gradient $\nabla f_i$ for some $i=1,2,\dotsc,n$ or a Hessian-vector product of the form $\nabla^2 f_i(\x) \v$, for some $i=1,2,\dotsc,n$ and some $\x, \v \in \R^d$. 
The result is complicated by the fact that there is a probability of failure to satisfy Condition~\ref{cond:opt_epsilong_epsilonh_fixedstepsize} at each $k$, to go along with the possible failure, noted in the previous section, to detect negative curvature when Procedure~\ref{alg:minimum_eigenvalue} is invoked.
For our result below, we consider the case in which failure to satisfy Condition~\ref{cond:opt_epsilong_epsilonh_fixedstepsize} never occurs at any iteration, Since there are at most $\bar{K}_2$ iterations, this case occurs with probability at least $(1-\bar\delta)^{\bar{K}_2}$.

\begin{corollary}[Evaluation Complexity of Algorithm~\ref{alg:fixed_stepsize} for finite-sum problem \cref{eq:finte_sum_problem}]
\label{cor:tr_prob}
Suppose that Assumption~\ref{ass:lip} is satisfied.
Let $\bar\delta \in (0,1)$ be given, and suppose that at each iteration $k$, $\g_k$ and $\H_k$ are obtained from \eqref{eq:sub_gh}, with $\mathcal S_g = \mathcal S_{g,k}$ and $\mathcal S_H$ satisfying the lower bounds in Lemma~\ref{lemma:sampling}, where $\delta_{g,k} \ge \bar\delta_g$ and $\delta_H \ge \bar\delta_H$, with $\bar\delta_g$ and $\bar\delta_H$ defined in \eqref{eq:bardgdh}.
For a given $\epsilon>0$, let $\epsilonh = \sqrt{L_H \epsilon}, \epsilong = \epsilon$. 
Let $\bar{K}_2$ be defined as in \eqref{eq:k2}.
Then with probability at least $(1-\bar\delta)^{\bar{K}_2}(1-\delta)^{\bar{K}_2}$, Algorithm~\ref{alg:fixed_stepsize} terminates in at most $\bar K_2$ iterations at a point $\x$ satisfying 
$\norm{\nabla f(\x)} \lesssim \epsilon$ and $\lambda_{\min}(\nabla^2 f(\x)) \gtrsim -\sqrt{L_H\epsilon}$.
\reftwo{Again, $\lesssim$ and $\gtrsim$ denote that the corresponding inequality holds up to a certain constant that is independent of $ \epsilon $ and $ L_{H} $.}
Moreover, the total number of oracle calls is bounded by 
	\begin{align*}
	   & \underbrace{\left(2\left\lceil \frac{(f(\x_0) - \flow)}{\min\{\csolb,\cncb/8\}} \epsilon^{-3/2} \right \rceil + 3 \right)}_{\bar K_2} \cdot \left(\underbrace{\frac{16K_g^2}{\bar\delta_{g}^2} \log\frac{1}{\bar\delta}}_{\text{Gradient Sampling}} +  \underbrace{\frac{16K_H^2}{\bar\delta_H^2} \log\frac{2d}{\bar\delta}}_{\text{Hessian Sampling}}\cdot \left(\underbrace{\tilde\OM(\epsilon^{-1/4})}_{Procedure~\ref{alg:capped_cg}} + \underbrace{\bigO(\epsilon^{-1/4})}_{Procedure~\ref{alg:minimum_eigenvalue}}  \right) \right) \\
	    & = \bigO(\epsilon^{-3/2}) \cdot(\OM(\epsilon^{-2}) + \tilde\OM(\epsilon^{-1} \times \epsilon^{-1/4})) \\
	    & = \OM(\epsilon^{-7/2}).
	\end{align*}
\end{corollary}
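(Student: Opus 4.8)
The plan is to assemble the statement from three already-established pieces: the deterministic iteration bound of \cref{thm:fixed_stepsize_complexity}, the per-iteration subsampling guarantee of \cref{lemma:sampling2}, and the inner-solver work counts of \cref{lemma:capped_cg_iter_bounded} and \cref{lemma:fail_min_eigen_oracle}. First I would isolate the ``good event'' $\mathcal E$ that Condition~\ref{cond:opt_epsilong_epsilonh_fixedstepsize} holds at every one of the first $\bar K_2$ iterations. By the first part of \cref{lemma:sampling2}, the bounds $\delta_{g,k}\ge\bar\delta_g$ and $\delta_H\ge\bar\delta_H$ are uniform in $k$, with $\bar\delta_g=\bigO(\epsilon)$ and $\bar\delta_H=\bigO(\epsilon^{1/2})$ as in \eqref{eq:bardgdh}; hence the sample sizes $\Abs{\mathcal S_g}=\bigO(\epsilon^{-2})$ and $\Abs{\mathcal S_H}=\bigO(\epsilon^{-1})$ can be fixed in advance, and for each $k$ the draw produces an accurate $(\g_k,\H_k)$ with probability at least $1-\bar\delta$. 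Taking the intersection over the $\bar K_2$ iterations gives $\Pr(\mathcal E)\ge(1-\bar\delta)^{\bar K_2}$; and on $\mathcal E$ the algorithm does stop within $\bar K_2$ iterations by \cref{thm:fixed_stepsize_complexity}, so the a-priori count $\bar K_2$ is legitimate.

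Next I would run the argument of \cref{thm:fixed_stepsize_complexity} (equivalently, the relevant part of \cref{thm:opt_iteration_comp}) conditionally on $\mathcal E$: the first-order guarantee $\norm{\nabla f(\x)}\lesssim\epsilon$ holds deterministically on $\mathcal E$, while the second-order guarantee $\lambda_{\min}(\nabla^2 f(\x))\gtrsim-\sqrt{L_H\epsilon}$ holds whenever every certificate issued by Procedure~\ref{alg:minimum_eigenvalue} during the run is correct. The MEO failure-probability bookkeeping from the proof of \cref{thm:opt_iteration_comp} carries over verbatim (it used only that there are at most $\bar K_2$ MEO calls and that each fails with probability at most $\delta$, independently of the subsampling), giving conditional probability at least $(1-\delta)^{\bar K_2}$ for no incorrect termination. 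Multiplying, the overall success probability is at least $(1-\bar\delta)^{\bar K_2}(1-\delta)^{\bar K_2}$, as claimed.

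For the oracle count I would simply tally per iteration: exactly one gradient estimate, costing $\Abs{\mathcal S_g}=\bigO(\epsilon^{-2})$ evaluations of individual $\nabla f_i$; at most one call to Procedure~\ref{alg:capped_cg}, which by \cref{lemma:capped_cg_iter_bounded} performs $\tilde\OM(\epsilonh^{-1/2})=\tilde\OM(\epsilon^{-1/4})$ products with $\H_k$; and at most one call to Procedure~\ref{alg:minimum_eigenvalue}, which by \cref{lemma:fail_min_eigen_oracle} performs $\bigO(\epsilonh^{-1/2})=\bigO(\epsilon^{-1/4})$ such products. Each product $\H_k\v$ costs $\Abs{\mathcal S_H}=\bigO(\epsilon^{-1})$ evaluations of individual $\nabla^2 f_i(\x)\v$. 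Multiplying by $\bar K_2=\bigO(\epsilon^{-3/2})$ gives $\bigO(\epsilon^{-3/2})\cdot\bigl(\bigO(\epsilon^{-2})+\tilde\OM(\epsilon^{-1})\cdot\tilde\OM(\epsilon^{-1/4})\bigr)=\OM(\epsilon^{-7/2})$, the gradient-sampling term dominating; this reproduces the displayed chain of bounds.

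The only genuinely delicate point is the probabilistic bookkeeping, so I would be explicit about it: one clean way is to fix independent subsamples for each iteration (and, if desired, each inner-iteration), define $\mathcal E$ as the intersection of the corresponding accuracy events, note that the random starting vectors inside Procedure~\ref{alg:minimum_eigenvalue} are drawn independently of all subsamples, and then apply the MEO martingale bound of \cref{thm:opt_iteration_comp} conditionally on $\mathcal E$. This avoids any circularity between ``number of iterations'' and ``all samples accurate'', and avoids double-counting the randomness of $\mathcal S_H$ — the matrix--vector products inside the eigenvalue oracle use the already-fixed $\H_k$, so they introduce no new randomness affecting the accuracy of $\H_k$. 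Everything else is routine substitution of the constants $\csolb$, $\cncb$ from \cref{lemma:fixed_stepsize_with_sol} and \cref{lemma:fixed_stepsize_nc_from_cappedcg} into $\bar K_2$ of \eqref{eq:k2}.
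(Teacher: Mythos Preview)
Your proposal is correct and follows the same approach as the paper, which does not give a formal proof but only the brief discussion preceding the corollary (noting that the ``good event'' where Condition~\ref{cond:opt_epsilong_epsilonh_fixedstepsize} holds at all $\bar K_2$ iterations has probability at least $(1-\bar\delta)^{\bar K_2}$) together with the annotated oracle-count formula in the statement itself. Your write-up is more explicit about the probabilistic conditioning and the independence between the subsampling randomness and the MEO randomness, which the paper elides.
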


\refone{
As mentioned earlier, \cref{alg:fixed_stepsize} requires knowledge of an upper bound of the Lipschitz contstant $L_H$ of the Hessian matrix. 
In addition, the sample complexity derived in \cref{cor:tr_prob} depends on upper estimates of $K_g$ and $K_H$, which may be unavailable for many non-convex problems. Fortunately, for many non-convex objectives of interest in machine learning and statistical analysis, we can readily obtain reasonable estimates of these quantities. 
\cref{tab:L_H_bound} provides estimates on $L_H$ for some examples of such objectives. See \cref{tab:K_H_K_g_bound} for upper bounds on $K_g$ and $K_H$ for such problems. 
Equipped with these estimates, we can give a more refined complexity analysis tailored for the problems in \cref{tab:L_H_bound,tab:K_H_K_g_bound}. 
Indeed, since for the constants $\csolb$ and $\cncb$ in \cref{lemma:fixed_stepsize_nc_from_cappedcg,lemma:fixed_stepsize_with_sol}, we have
$\csolb \in \Omega(1/L_H^3)$, $\cncb \in \Omega(1/L_H^3)$, from \cref{tab:L_H_bound,tab:K_H_K_g_bound,cor:tr_prob}, it follows that the total number of oracle calls for these problems is at most
\begin{align*}
	& \tilde \OM \left[ \left( \left( \max_{i} \|\a_{i}\|^{9} \right) (f(\x_0) - \flow) \epsilon^{-3/2} \right) \right] \cdot \left( \tilde \OM \left( \left( \max_{i} \|\a_{i}\|^{2} \right) \epsilon^{-2} \right) + \tilde \OM \left( \left( \max_{i} \|\a_{i}\| \right) \epsilon^{-5/4}\right) \right) \\
	& = \tilde \OM\left(\epsilon^{-7/2} (f(\x_0) - \flow) \max_{i} \left\{1,\|\a_{i}\|\right\}^{11} \right),
\end{align*}
where for simplicity we have assumed $ |b_{i}| \leq 1 $, e.g., binary classification problems.

\begin{table}[!htb]
	\centering
	\refone{\caption{The upper bound of $K_g$ and $K_H$ for the non-convex finite-sum minimization problems of \cref{tab:L_H_bound}. \label{tab:K_H_K_g_bound}}}
	\centering
	\begin{adjustbox}{width=1\linewidth}
		\refone{\begin{tabular}{lcccc}
			\toprule
			Problem Formulation & Predictor Function & \multicolumn{1}{m{6cm}}{\centering Upper bound of $K_g$} & \multicolumn{1}{m{5cm}}{\centering Upper bound of $K_H$} \\
			\midrule
			$\sum\limits_{i=1}^n(b_i-\phi(\langle \a_i, \x\rangle))^2$ & $\phi(z) =1/{(1+e^{-z})}$
			& \multicolumn{1}{m{6cm}}{\centering$\displaystyle \max_{i=1,\ldots,n} \; \left(|b_i| + 1\right)\|\a_{i}\|/2$} & $\displaystyle \max_{i=1,\ldots,n} \; \left(|b_i| + 2\right)\|\a_{i}\|^{2}$ \\ \\
			$\sum\limits_{i=1}^n(b_i-\phi(\langle \a_i, \x\rangle))^2$ & $\phi(z) ={(e^{z}-e^{-z})}/{(e^{z}+e^{-z})}$
			& \multicolumn{1}{m{6cm}}{\centering$\displaystyle \max_{i=1,\ldots,n} \; 2 \left(|b_i| + 1\right)\|\a_{i}\|$} & $\displaystyle \max_{i=1,\ldots,n} \; \left(|b_i| + 2\right)\|\a_{i}\|^{2}$ \\\\ 
			$\sum\limits_{i=1}^n\phi(b_i - \langle \a_i, \x\rangle)$ & $\phi(z) ={(1 -e^{-\alpha z^2})}/{\alpha}$ & \multicolumn{1}{m{6cm}}{\centering$\displaystyle \sqrt{2/\alpha}\max_{i=1,\ldots,n} \; \|\a_{i}\|$} & $\displaystyle 2 \max_{i=1,\ldots,n} \;\|\a_{i}\|^{2}$\\ 
			\bottomrule
		\end{tabular}}
	\end{adjustbox}
\end{table}

}

\section{Numerical evaluation}
\label{sec:numerical_experiments}

In this section, we evaluate the performance of \cref{alg:inexact_withlinesearch,alg:fixed_stepsize} on three model problems in the form of finite-sum minimization: nonlinear least squares (NLS), multilayer perceptron (MLP), and variational autoencoder (VAE). 
Our aim here is to illustrate the efficiency gained from gradient and Hessian approximations as compared with the exact counterpart in \cite{royer2018newton}.
More specifically, in our numerical examples, we consider the following algorithms.
\begin{itemize}[leftmargin=*,wide=0em]%, itemsep=-3pt,topsep=-3pt]
\item {\texttt {Full NTCG}}: Newton Method with Capped-CG solver with full gradient and Hessian evaluations, as developed in \cite{royer2018newton}. 
\item {\texttt {SubH NTCG}} ({\bf this work}): Variant of \cite{royer2018newton} where Hessian is approximated. We consider this setting as an intermediary between the full algorithm and those where both the gradient and the Hessian are approximated. Sample sizes for approximating Hessian for experiments using NLS, MLP, and VAE, are $ 0.01n $, $ 0.02n $, and $ 0.02n $, respectively. 
\item {\texttt {Inexact NTCG Full-Eval}} ({\bf this work}): Newton Method with Capped-CG solver with back-tracking line-search where both the gradient and the Hessian are approximated. 
To perform the backtracking line search, we employ the full dataset to evaluate the objective function. 
% {\bf NEED TO CHANGE THIS SINCE $ \sgn(\d^T\nabla f(\x_k)) $ IS NO LONGER NEEDED EXACTLY IN \cref{alg:inexact_withlinesearch}.} \refboth{Following \cref{alg:inexact_withlinesearch}, the gradients used for the steps involving the ``sgn'' function and the inner-products, i.e., $ \sgn(\d^T\nabla f(\x_k)) $, are also evaluated exactly.} 
The sample size for estimating the gradient is adaptively calculated as follows: if $\|\g_{t}\| \geq 1.2 \|\g_{t-1}\|$ or $\|\g_{t}\| \leq \|\g_{t-1}\|/1.2$, then the sample size is decreased or increased, respectively, by a factor of 1.2. 
Otherwise, we maintain the same sample size as the previous iteration. \reftwo{The initial sample size to approximate the gradient for the experiments of \cref{sec:nls} is set to $0.05 n$, while for the experiments of \cref{sec:mlp,sec:vae}, we use an initial sample size of 10,000.}
The sample size for approximating the Hessian is set the same as that in \texttt{SubH NTCG}. 
\item {\texttt {Inexact NTCG Fixed}} ({\bf this work}): Newton Method with Capped-CG solver, using approximations of both the gradient and the Hessian and fixed step-sizes. 
The step sizes are predefined as follows: for NLS experiments, we use $\alpha_k=0.04$ for $\dtype=\NC$ and $\alpha_k=0.2$ for $\dtype=\SOL$, while for simulations on MLP/VAE models, we consider $\alpha_k=0.1$ for $\dtype=\NC$ and $\alpha_k=\sqrt{0.1}$ for $\dtype=\SOL$. 
The gradient and Hessian approximations are done as in the previous two variants.
\item {\texttt {Inexact NTCG Sub-Eval}}: This method is almost identical to \texttt {Inexact NTCG Full-Eval}, however, 
% there are a few critical differences. \refboth{The approximate gradient used throughout the algorithm, in particualr the quantities $ \sgn(\d^T\nabla f(\x_k)) $ are approximated by using the gradient estimate as $ \sgn(\d^T \g_k) $}. Furthermore, 
the backtracking line search is performed on estimates of the objective function using the same samples as the ones used in gradient approximation. 
Of course, our theoretical analysis does not immediately support this variant. However, we have found this strategy to be highly effective in practice, and we intend to theoretically investigate it in future work. 
\end{itemize}

\reftwo{In all of our experiments, we run each stochastic method five times (starting from the same initial point), and plot the average run (solid line) and 1-standard deviation band (shaded regions). 
To avoid cluttering the plots, we only show the upper deviation from the average, since the lower deviation band is almost identical on all of our experiments.}

\refone{We note that the step-size implies by \cref{alg:fixed_stepsize} is very pessimistic and hence small. This is a byproduct of our worst-case analysis, which comprises of descent obtained from a sequence of conservative steps. Requiring small step-lengths to provide a convergence guarantee is perhaps the main drawback for the worst-case style of analysis, which is almost ubiquitous within the optimization literature, e.g., fixed step-size of length $1/L_g$ for gradient descent on smooth unconstrained problems. 
Our numerical example shows that much larger step-sizes than those prescribed by \cref{alg:fixed_stepsize} can be employed in practice. 
We suspect this to be the case for most practical applications.}

\reftwo{
Although in Algorithms \ref{alg:inexact_withlinesearch} and \ref{alg:fixed_stepsize}, the case where $ \|\d_{k}\| $ is small (relative to the ratio $\epsilon_{g}/\epsilon_{H}$) is crucial in obtaining theoretical guarantees, in all of our simulations, we have found that performing line search directly with such small $ \d_{k} $ and without resorting to Procedure \ref{alg:minimum_eigenvalue} in fact yields reasonable progress. In this light, in all of our implementations, we have made the practical decision to omit Lines 9-16 of Algorithms \ref{alg:inexact_withlinesearch} and \ref{alg:fixed_stepsize}.}

%As we mentioned in Section~\ref{sec:main_result}, we pick the sign of the direction $\d_t$ in relation to $\g_k$. 
%Also, we do not verify the condition of Lemma~\ref{lemma:terminate_step_condition_withlinesearch}. 
Similar to \cite{xuNonconvexEmpirical2017,yao2018inexact}, the performance of all the algorithms is measured by tallying the total \textit{number of propagations}, that is, the number of oracle calls of function, gradient, and Hessian-vector products. This is so since comparing algorithms in terms of ``wall-clock'' time can be highly affected by their particular implementation details as well as system specifications. In contrast, counting the number of oracle calls, as an implementation and system independent unit of complexity, is most appropriate and fair. 
More specifically, after computing $ f_{i}(\x) $, which accounts for one oracle call, computing the corresponding gradient $ \nabla f_{i}(\x) $ is equivalent to one additional function evaluation, i.e., two oracle calls are needed to compute $ \nabla f_{i}(\x) $. Our implementations are Hessian-free, i.e., we merely require Hessian-vector products instead of using the explicit Hessian. For this, each Hessian-vector product $ \nabla^{2} f_{i}(\x) \vv $ amounts to two additional function evaluations, as compared with gradient evaluation, i.e., four oracle calls are used to evaluate $ \nabla^{2} f_{i}(\x) \vv$.

\subsection{Nonlinear least squares}
\label{sec:nls}
We first consider the simple, yet illustrative, non-linear least squares problems arising from the task of binary classification with squared loss.\footnote{Logistic loss, the ``standard'' loss used in this task, leads to a convex objective. We use squared loss to obtain a nonconvex objective.}
Given training data $\{\a_i,b_i\}_{i=1}^n$, where $\a_i\in\bbR^d, b_i\in\{0,1\}$, we solve the empirical risk minimization problem
\begin{align*}
\min_{\x\in\bbR^d} \frac1n \sum_{i=1}^n \Big(b_i-\phi\big(\langle\a_i,\x\rangle\big)\Big)^2,
\end{align*}
where $\phi(z)$ is the sigmoid function: $\phi(z) = 1/(1+e^{-z})$.
Datasets are taken from \texttt{LIBSVM} library~\citep{libsvm}; see \cref{tab:data1} for details. 
We use the same setup as in~\cite{yao2018inexact}. 
% \begin{table}[htb]
% \caption{Datasets used for experiments with non-linear least squares.}
% \label{tab:data1}
% \centering
% \begin{tabular}{cccc}
% \toprule
% \sc Data & $n$ & $d$ \\ 
% \midrule
% {\texttt{ covertype}} & 464,810 & 54
% \\
% {\texttt{ ijcnn1}} & 49,990 & 22 
% \\
% \bottomrule
% \end{tabular}
% \end{table}

\begin{table}[!htb]
		\centering
		\caption{Datasets used for NLP experiments.}
		\label{tab:data1}
		\begin{tabular}{lccccc} 
			\toprule
			\sc Data & $n$ & $d$ \\ 
			\midrule
			{\texttt{ covertype}} & 464,810 & 54
			\\
			{\texttt{ ijcnn1}} & 49,990 & 22 
			\\
			\bottomrule
		\end{tabular}
\end{table}

The comparison between different NTCG algorithms is shown in Figure~\ref{fig:nls_result_ntcg}. 
\reftwo{It is clear that, for a given value of the loss, all inexact variants in the \texttt{Inexact NTCG} family converge faster, i.e., with fewer oracle calls.} 
Clearly, lower per-iteration cost of \texttt{Inexact NTCG Fixed} comes at the cost of slower overall convergence as compared with \texttt{Inexact NTCG Sub-Eval}. This is mainly because the step size obtained as part of the line-search procedure can generally result in a better decrease in function value.
\reftwo{For this problem we could refer to \cref{tab:L_H_bound} and explicitly compute the fixed step-size prescribed by \cref{alg:fixed_stepsize}. 
As mentioned earlier, the resulting step size is overly conservative. 
% We realize that this conservative estimate is a mere by-product of our proof. 
Our simulations show that much larger step-sizes yield convergent algorithms. In this light, our fixed step-sizes are chosen  without regard to the value prescribed  in \cref{alg:fixed_stepsize}, but are based rather on numerical experience.}

%%%%%%%%%%%%%%%%%%%%%%%%%%%%%%%%%%%%%%%%%%%%%%%%%%%%%%%%%%%%%%%%%%%%%%%%%%%%%%%%%%%
\begin{figure}[tbp]
\begin{center}
  \includegraphics[width=.49\textwidth]{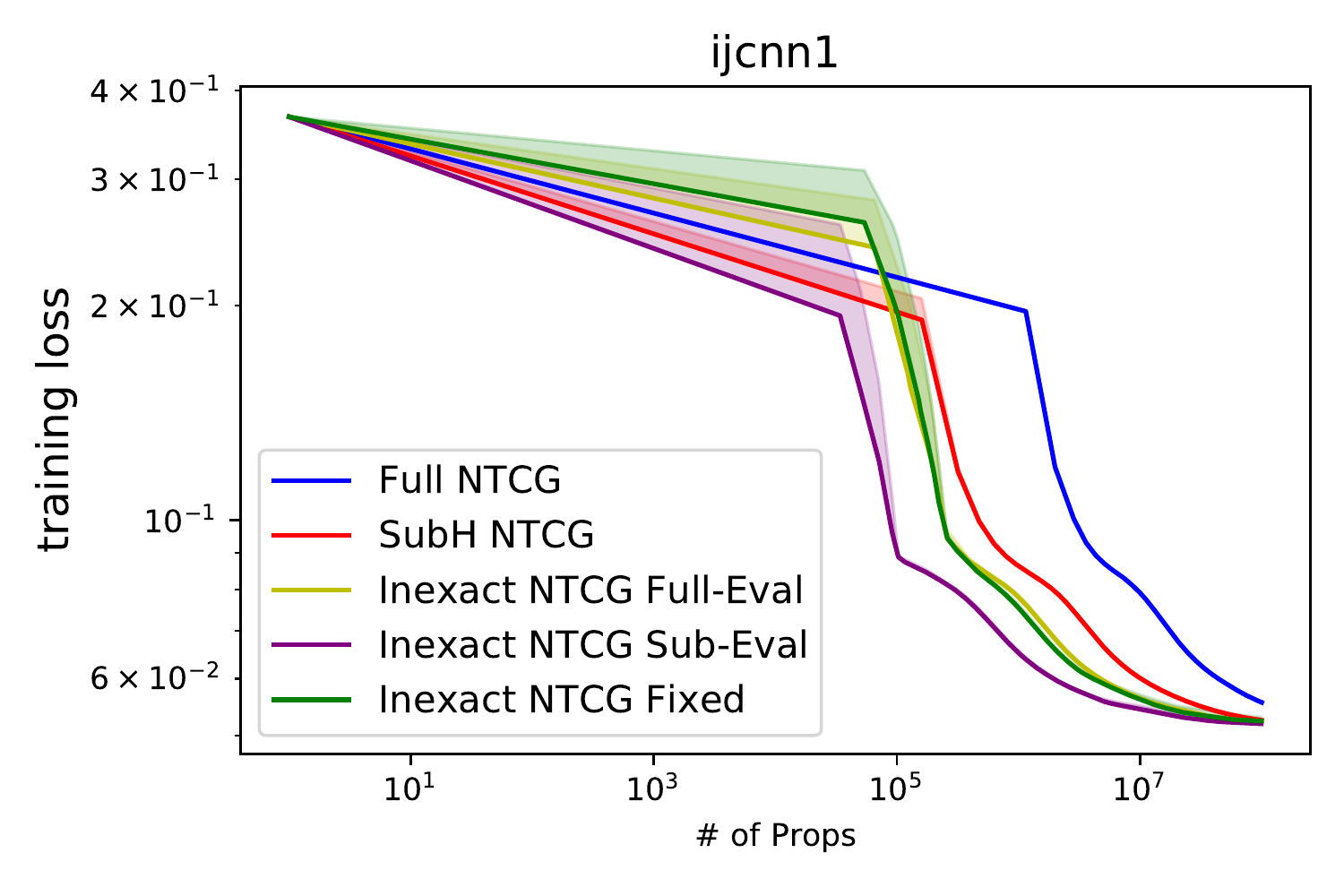}
  \includegraphics[width=.49\textwidth]{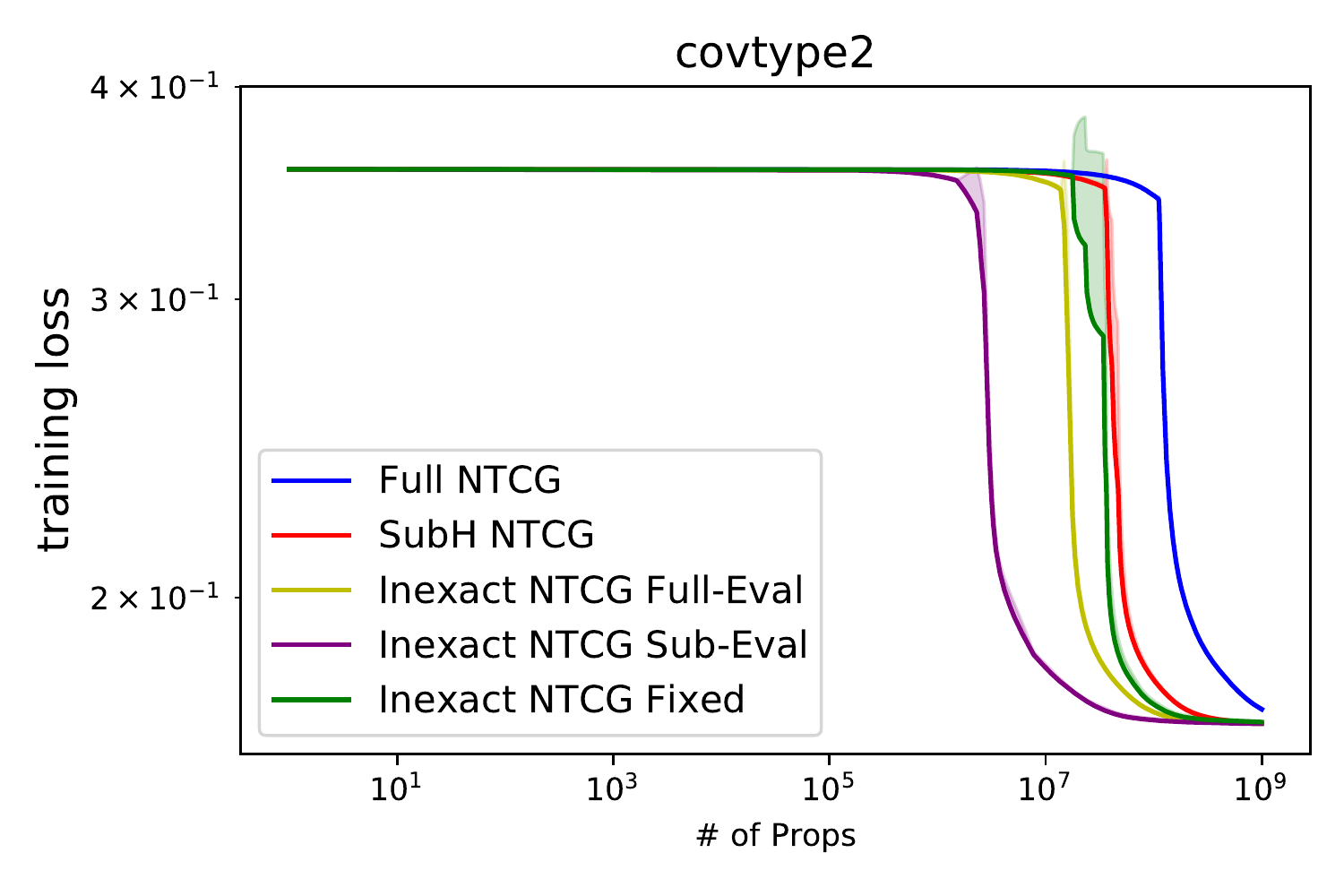}
\end{center}
\caption{
  Comparison between all variants of \texttt{NTCG} on \texttt{ijcnn1} and \texttt{covertype} datasets. 
}
\label{fig:nls_result_ntcg}
\end{figure}
%%%%%%%%%%%%%%%%%%%%%%%%%%%%%%%%%%%%%%%%%%%%%%%%%%%%%%%%%%%%%%%%%%%%%%%%%%%%%%%%%%%

\subsection{Multilayer perceptron}
\label{sec:mlp}
Here, we consider a slightly more complex setting than simple NLS and evaluate the performance of \cref{alg:inexact_withlinesearch,alg:fixed_stepsize} on several MLPs in the context of the image classification problem. 
For our experiments here, we will make use of the \texttt{MNIST} dataset, which is also available from \texttt{LIBSVM} library \citep{libsvm}. 
We consider three MLPs with one hidden layer, involving $ 16 $, $ 128 $, and $ 1024 $ neurons, respectively. All MLPs contain one output layer to determine the assigned class of the input image. 
The intermediate activation is chosen as the SoftPlus function~\citep{glorot2011deep}, which amounts to a smooth optimization problem. 
Table~\ref{tab:mlp} summarizes the total dimensions, in terms of $ n $ and $ d $, of the resulting optimization problems.  
\begin{table}[!htb]
	\centering
	\caption{The problem size for various MLPs.
		\label{tab:mlp}}
	\centering
	\begin{tabular}{lcccc}
		\toprule
		\multicolumn{1}{m{2cm}}{\centering Hidden Layer Size} & $n$ & $ d $  \\ 
		\midrule
		16 & 60,000 & 12,704 \\
		128 & 60,000 & 101,632 \\
		1,024 & 60,000 & 813,056 \\
		\bottomrule
	\end{tabular}
\end{table}

\cref{fig:mlp_result_ntcg} depicts the performance of all variants of NTCG that we consider in this paper. 
As can be seen, for all cases, our \texttt{Inexact NTCG Full-Eval} and \texttt{Inexact NTCG Sub-Eval} have the fastest convergence rate and achieve lower training loss as compared to alternatives. 

\begin{figure}[tbp]
\begin{center}
  \includegraphics[width=.325\textwidth]{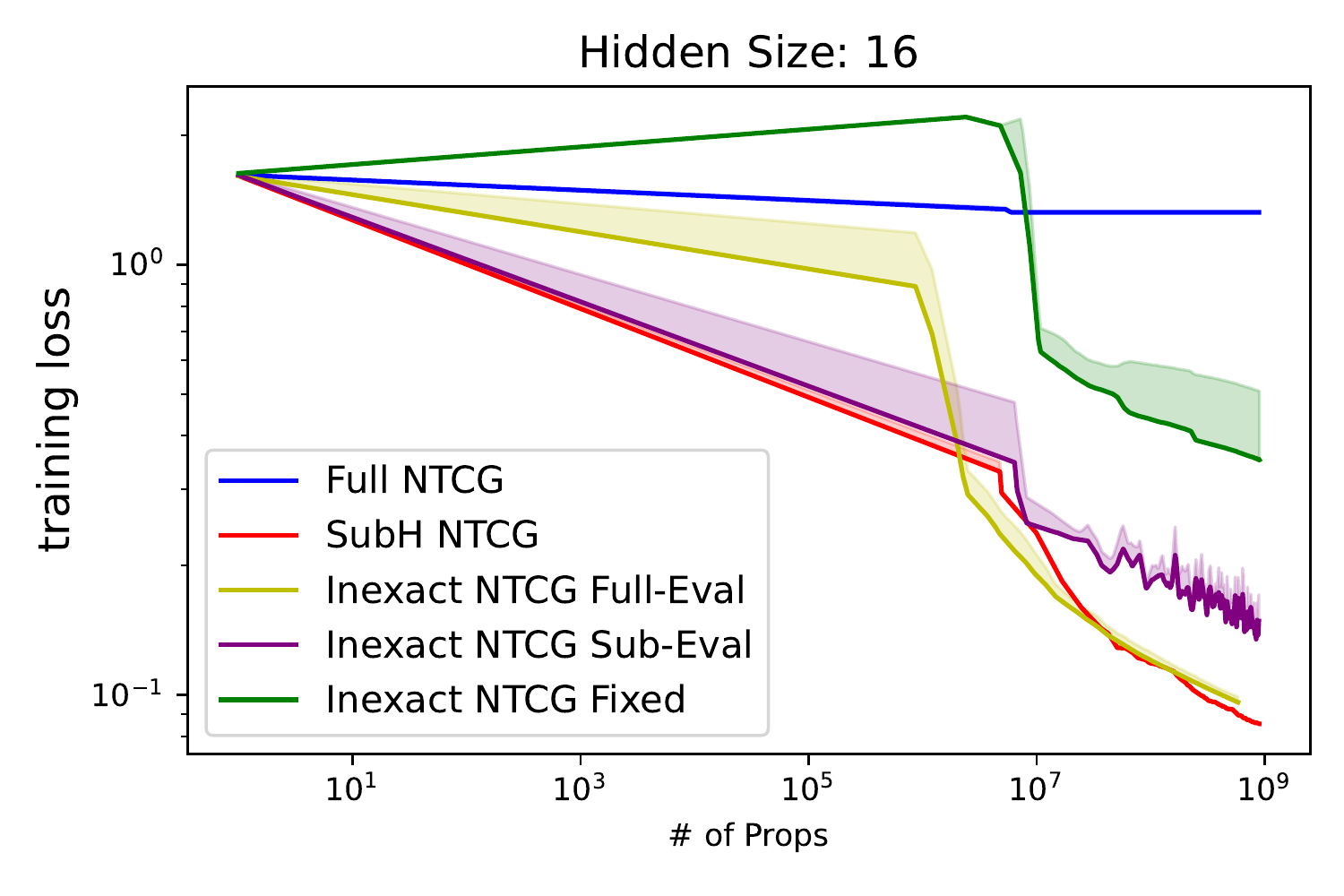}
  \includegraphics[width=.325\textwidth]{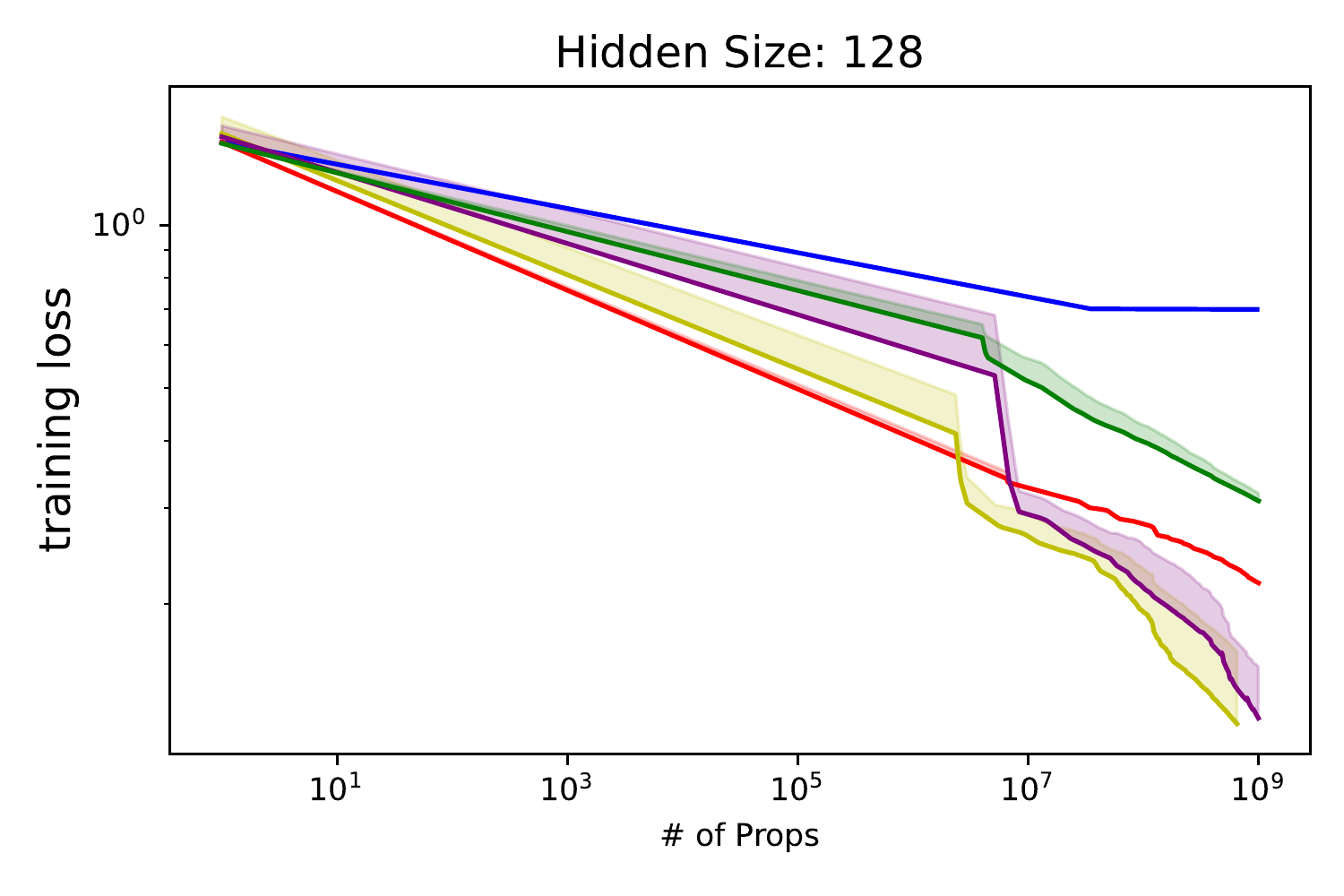}
  \includegraphics[width=.325\textwidth]{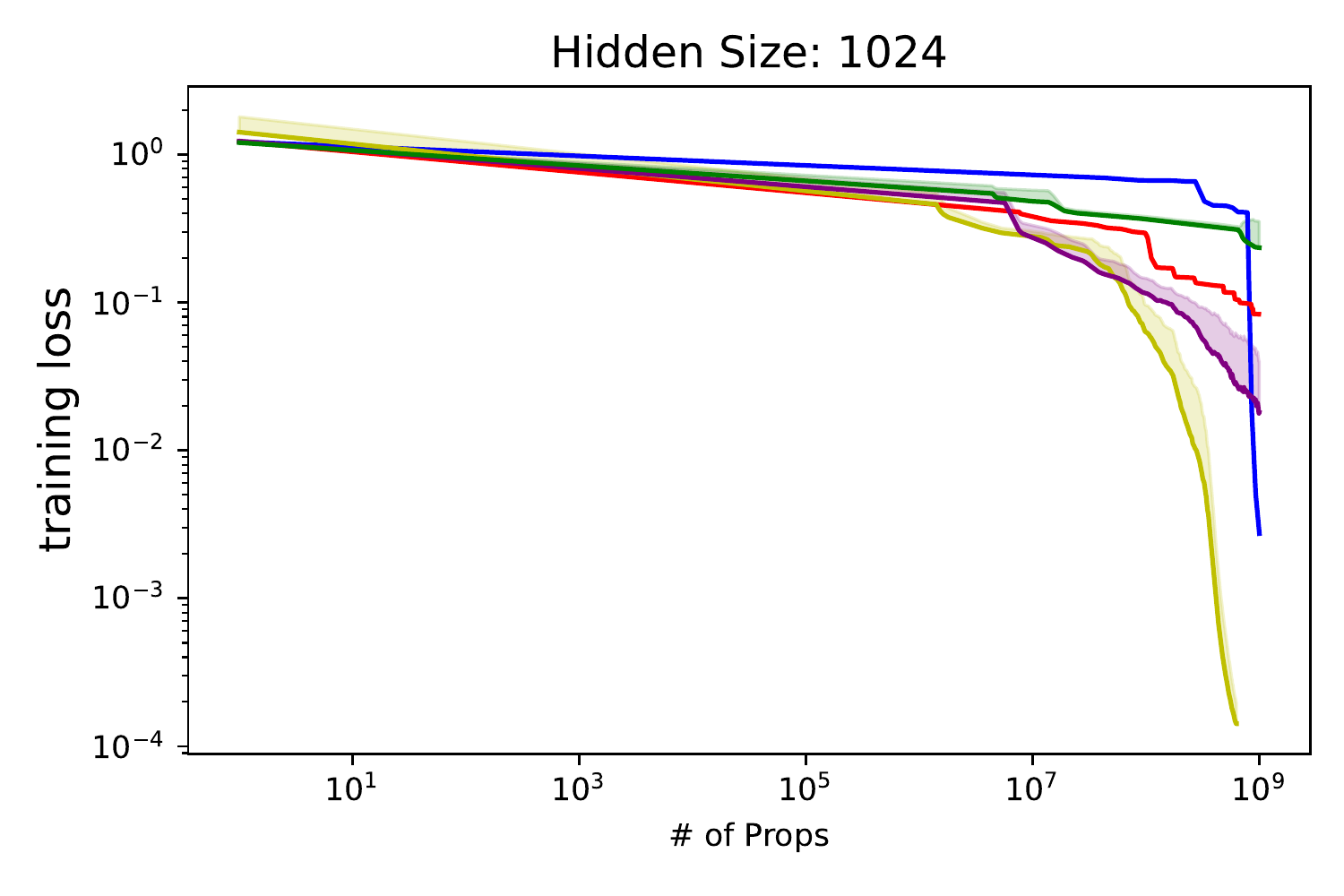}
\end{center}
\caption{
  Comparison between all variants of \texttt{NTCG} on several MLPs with different hidden-layer sizes: 16 (left), 128 (middle), and 1024 (right). 
}
\label{fig:mlp_result_ntcg}
\end{figure}

\subsection{Variational autoencoder}
\label{sec:vae}
We now evaluate the performance of 
%\ref{alg:inexact_withlinesearch} and~\ref{alg:fixed_stepsize}
\cref{alg:inexact_withlinesearch,alg:fixed_stepsize}
using a more complex setting of variational autoencoder (VAE) model.
Our VAE model consists of six fully-connect layers, which are structured as $784\rightarrow512\rightarrow256\rightarrow2\rightarrow256\rightarrow512\rightarrow784$. 
The intermediate activation and the output truncation functions, are respectively chosen as SoftPlus \citep{glorot2011deep} and Sigmoid \citep{glorot2011deep}. We again consider the \texttt{MNIST} dataset.

The results are shown in Figure~\ref{fig:vae}. 
Although we did not fine-tune the fixed step-sizes used within \texttt{Inexact NTCG Fixed} (as evidenced by its clear non-monotonic behavior), one can see that \texttt{Inexact NTCG Fixed} exhibits competitive performance. 
Again, as observed previously, \texttt{Inexact NTCG Full-Eval} and \texttt{Inexact NTCG Sub-Eval} have the fastest convergence rate among all of the variants.

\begin{figure}[tbp]
\begin{center}
  \includegraphics[width=.45\textwidth]{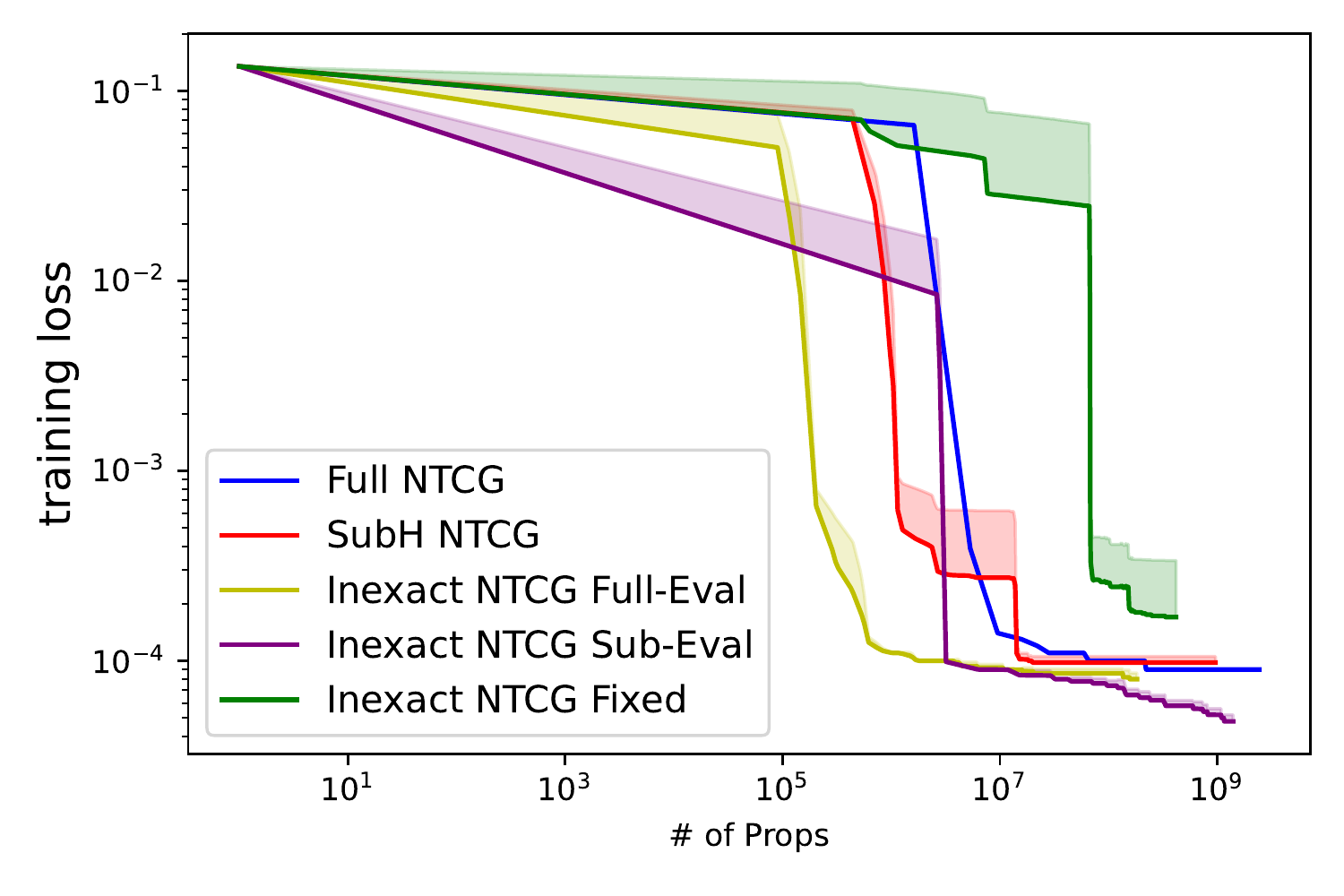}
  \end{center}
\caption{
  Comparison between all variants of \texttt{NTCG} on VAE. 
}
\label{fig:vae}
\end{figure}
\section{Conclusion}

We have considered inexact variants of the Newton-CG algorithm in which \refboth{approximations of gradient and Hessian are used.} 
\refone{\cref{alg:inexact_withlinesearch} employs approximations to the gradient and Hessian matrix at each step, and this inexact information is used to obtain an approximate Newton direction in Procedure \ref{alg:capped_cg}. 
However, to obtain the step-size, \cref{alg:inexact_withlinesearch} requires exact function values.
This issue is partially addressed in \cref{alg:fixed_stepsize}, where fixed step-sizes replace the line search. 
The drawbacks of the latter approach are that the fixed step-sizes are conservative and that they depend on some problem-dependent quantities that are generally unavailable, though known for  some important classes of machine learning problems. 
An ``ideal'' algorithm would allow for line searches using inexact function evaluations. 
One might be able to derive such a version using some further assumptions on the inexact function and the inexact gradient, such as those considered in \cite{paquette2020stochastic}, and by introducing randomness into the algorithm and the use of concentration bounds in the analysis. 
We intend to investigate these topics in future research.}

% \reftwo{Perhaps, one of the main drawbacks of \cref{alg:inexact_withlinesearch} is the need to evaluate the gradient exactly for computing the quantity $ \d^{T} \nabla f(\x_k) $, which is in tern needed to adjust the direction of the negative curvature vector. One possibility that can be explored in the future is to remove this quantity, and instead perform the line-search using both $\d$ and $-\d$ to determine the appropriate sign. The main difficulty here lies in knowing when to stop the backtracking line-search on either of these two directions. This is because one of these two direction will certainly be a direction of ascend. As a result, irrespective of the magnitude of the step-size, the backtracking operation never terminates. Identifying when this arises is key in providing analysis for this approach. Nonetheless, the cost of evaluating the gradient for most problems is roughly twice that of the function evaluation, e.g., , using automatic differentiation techniques \cite{baydin2018automatic,griewank1989automatic}. Hence, the computational savings from this approach might not be quite clear at this stage. However, there is undoubtedly merit in considering this direction further.}

We are especially interested in problems in which the objective has a ``finite-sum'' form, so the approximated gradients and Hessians are obtained by sampling randomly from the sum.
For all of our proposed variants, we showed that the iteration complexities needed to achieve approximate second-order criticality are essentially the same as that of the exact variants.
In particular, a variant that uses a fixed step size, rather than a step chosen adaptively by a backtracking line search, attains the same order of complexity as the other variants, despite never needing to evaluate the function itself. 

\refone{The dependence of our algorithms on Procedure \ref{alg:minimum_eigenvalue} implies the probabilistic nature of our results, which can be shown to hold with high-probability over the run of the algorithm.}

We demonstrate the advantages and shortcomings of the approach, in comparison with other methods, using several test problems.

\section*{Acknowledgements}
Fred Roosta was partially supported by the Australian Research Council through a Discovery Early Career Researcher Award (DE180100923).
Stephen Wright was partially supported by NSF Awards 1740707 and 2023239; DOE ASCR under Subcontract 8F-30039 from Argonne National Laboratory; Award N660011824020 from the DARPA Lagrange Program; and AFOSR under subcontract UTA20-001224 from the University of Texas-Austin. Michael Mahoney would also like to acknowledge DARPA, NSF, and ONR for providing partial support of this work.

\bibliographystyle{abbrvnat}
\bibliography{refs}

\end{document}